\newcommand{\myname}{\textbf{Stefan Waldmann}}
\newcommand{\myemail}{\texttt{stefan.waldmann@mathematik.uni-wuerzburg.de}}
\newcommand{\myaddress}{Julius Maximilian University of Würzburg \\
	Department of Mathematics \\
	Chair of Mathematics X (Mathematical Physics) \\
	Emil-Fischer-Straße 31 \\
	97074 Würzburg \\
	Germany}
\newcommand{\AuthorOne}{\textbf{Marvin Dippell}}
\newcommand{\AuthorTwo}{\textbf{Chiara Esposito}}
\newcommand{\AuthorThree}{\myname}
\author{\AuthorOne\thanks{\AuthorEmailOne},\\[0.2cm]
	\AuthorAddressOne
	\\[0.5cm]
	\AuthorTwo\thanks{\AuthorEmailTwo},\\[0.2cm]
	\AuthorAddressTwo
	\\[0.5cm]
	\AuthorThree\thanks{\AuthorEmailThree}\\[0.2cm]
	\AuthorAddressThree
	\\[0.5cm]
}
\newcommand{\AuthorAddressOne}{\myaddress}
\newcommand{\AuthorAddressTwo}{Dipartimento di Matematica\\
	Università degli Studi di Salerno\\
	via Giovanni Paolo II, 123\\
	84084 Fisciano (SA)\\
	Italy}
\newcommand{\AuthorAddressThree}{\myaddress}
\newcommand{\AuthorEmailOne}{\texttt{marvin.dippell@mathematik.uni-wuerzburg.de}}
\newcommand{\AuthorEmailTwo}{\texttt{chesposito@unisa.it}}
\newcommand{\AuthorEmailThree}{\myemail}
\tikzset{triplearrow/.style={
		draw=black!75,
		color=black!75,
		double distance=4pt,
		-
	},
	thirdline/.style={
		draw=black!75,
		color=black!75,
		-{>[scale=2.0]}
}}
\newcommand{\xRrightarrow}[2][]{\ext@arrow 0359\Rrightarrowfill@{#1}{#2}}
\newcommand{\Rrightarrowfill@}{\arrowfill@\equiv\equiv\Rrightarrow}
\newcommand{\longRrightarrow}{\xRrightarrow{\,\quad}}
\newcommand{\deform}[1]{\boldsymbol{#1}}
\newcommand{\Null}{{\scriptscriptstyle0}}
\newcommand{\NULL}{0}
\newcommand{\Wobs}{{\scriptscriptstyle\normalizer}}
\newcommand{\WOBS}{\normalizer}
\newcommand{\Total}{{\scriptscriptstyle\mathrm{tot}}}
\newcommand{\TOTAL}{\mathrm{tot}}
\newcommand{\Trivial}{{\scriptstyle\mathrm{trivial}}}
\newcommand{\Unreduce}{{\scriptstyle\mathrm{unred}}}
\renewcommand{\red}{{\scriptscriptstyle\mathrm{red}}}
\newcommand{\qalgebra}[1]{\pmb{\algebra{#1}}}
\newcommand{\qmodule}[1]{\pmb{\module{#1}}}
\newcommand{\algbimodule}[3]{\deco{}{\algebra{#1}}{\module{#2}}{}{\algebra{#3}}}
\newcommand{\qalgbimodule}[3]{\deco{}{\qalgebra{#1}}{\qmodule{#2}}{}{\qalgebra{#3}}}
\newcommand{\normalizer}{\mathrm{N}}
\newcommand{\reduce}{\functor{red}}
\renewcommand{\cl}{\functor{cl}}
\newcommand{\lidentity}{\operator{left}}
\newcommand{\ridentity}{\operator{right}}
\newcommand{\Id}{\operator{Id}}
\newcommand{\bicategory}[1]{\category{#1}}
\newcommand{\CoisoAlgTriple}{\mathsf{C_3Alg}}
\newcommand{\LeftIdeals}{\mathsf{LeftIdealAlg}}
\newcommand{\CoisoAlgPair}{\mathsf{C_2Alg}}
\newcommand{\CoisoBimodTriple}{\mathsf{C_3Bimod}}
\newcommand{\CoisoBimodPair}{\mathsf{C_2Bimod}}
\newcommand{\CoisoBimod}{\mathsf{CoisoBimod}}
\newcommand{\Bimodule}{\categoryname{Bimodule}}
\newcommand{\Pic}{\mathsf{Pic}}
\title{Coisotropic Triples, Reduction and Classical Limit}
\date{\today}
\begin{document}

\selectlanguage{english}

%
%

\maketitle

%
%

\begin{abstract}
    Coisotropic reduction from Poisson geometry and deformation
    quantization is cast into a general and unifying algebraic
    framework: we introduce the notion of coisotropic triples of
    algebras for which a reduction can be defined. This allows to
    construct also a notion of bimodules for such triples leading to
    bicategories of bimodules for which we have a reduction functor as
    well. Morita equivalence of coisotropic triples of algebras is
    defined as isomorphism in the ambient bicategory and characterized
    explicitly. Finally, we investigate the classical limit of
    coisotropic triples of algebras and their bimodules and show that
    classical limit commutes with reduction in the bicategory sense.
\end{abstract}

%
%

\tableofcontents
\newpage

%
%

\section{Introduction}
\label{sec:Introduction}

Coisotropic reduction is one of the standard constructions in Poisson
geometry leading to a new reduced Poisson manifold obtained out of the
given data of a Poisson manifold with a coisotropic submanifold. Of
course, geometrically certain circumstances have to be met in order to
obtain a smooth reduced Poisson manifold. Ignoring these geometric
assumptions, an algebraic formulation of coisotropic reduction is
possible and works in general, yielding a reduced Poisson algebra out
of a given Poisson algebra with a coisotropic ideal: here an
associative ideal in a Poisson algebra is called coisotropic if it is
a Poisson subalgebra, though not necessarily a Poisson ideal.

The original motivation to consider coisotropic submanifolds and the
corresponding reduction comes from Dirac's program \cite{dirac:1950a,
  dirac:1964a} to handle constraint mechanical systems: the notion of
a coisotropic submanifold corresponds to the first-class
constraints. Dirac's intention ultimately was of course to pass to a
quantum theory. This leads to the task to find a quantized version of
coisotropic reduction as well.

Among many approaches one can favour deformation quantization
\cite{bayen.et.al:1978a} as starting point. Here various versions of
phase space reduction are available, starting with a BRST approach in
\cite{bordemann.herbig.waldmann:2000a} and more general coisotropic
reduction schemes found in e.g. \cite{cattaneo.felder:2007a,
  cattaneo:2004a, cattaneo.felder:2004a,
  bordemann.herbig.pflaum:2007a, bordemann:2005a,
  bordemann:2004a:pre}. The general idea is that the functions
vanishing on the coisotropic submanifold should be deformed into a
\emph{left ideal} of the ambient algebra of all functions. The reduced
algebra is then the quotient of the normalizer of this left ideal
modulo the left ideal. Also, it is worthwhile to mention that
the quantization of coisotropic subgroups has been considered
in the context of quantum groups, see
e.g \cite{ciccoli:1997a, ciccoli.gavarini:2006a}.

Since both versions, the classical reduction as well as the quantum
reduction, can be formulated entirely in an algebraic fashion it is
reasonable to explore the algebraic features further, independent of
the possible geometric origin. Then one important question is how the
relations between different algebras with coisotropic ideals behave
after reduction. A standard question beyond the behaviour of
isomorphisms is then the behaviour of Morita equivalence.

Thus the first main question we want to address is how Morita
equivalences between reduced algebras can be encoded in the data
before reduction.

The main idea to approach this is to put Morita theory in a slightly
larger context of an appropriate bicategory: for algebras (or rings)
it is a well-known procedure that the bicategory of all bimodules
$\Bimodules$ encodes Morita equivalence as the notion of isomorphism in
the bicategorical sense. However, now one has much more structure as
also bimodules between algebras enter the game which are not
necessarily equivalence bimodules: they can carry important
information themselves.

Thus our first step is to construct a bicategory for the situation
before reduction which allows for a functorial reduction. It turns out
that the first idea of algebras $\algebra{A}_\Total$ with a specified
left ideal $\algebra{A}_\Null$ are not yet the suitable notion of
objects in this bicategory. One simply can not define a reasonable
notion of bimodules over such pairs that is compatible with reduction.
Thus our approach consists in taking triples of an ambient algebra:
the total algebra $\algebra{A}_\Total$, a subalgebra $\algebra{A}_\Wobs$ of weakly
observables in Dirac's original sense, and a two-sided ideal $\algebra{A}_0$ in this
subalgebra, corresponding to the left ideal from before. The idea in
mind is that starting with an algebra with left ideal one has to add
the normalizer of this left ideal as algebra in the
middle. Nevertheless, we give classes of interesting examples where
one needs additional freedom to choose this algebra in the middle,
thus justifying to consider what we call \emph{coisotropic triples} of
algebras
$\algebra{A} = (\algebra{A}_\Total, \algebra{A}_\Wobs,
\algebra{A}_\Null)$
in the following.  We then define a notion of bimodules over such
triples allowing for a good tensor product: this ultimately leads us
to the \emph{bicategory of coisotropic triples} $\CoisoBimodTriple$
where the 1-morphisms are bimodules over coisotropic triples of
algebras as objects with an appropriate notion of bimodule morphisms
as 2-morphisms.

Having this bicategory, it allows now to speak of Morita equivalence
of coisotropic triples of algebras which, by definition, is
isomorphism in the sense of $\CoisoBimodTriple$. As a first result we
give an explicit characterization of Morita equivalent triples in
Theorem~\ref{theorem:MEBimoduleStructure}: it implies Morita
equivalence of the corresponding $\TOTAL$- and $\WOBS$-components
together with a compatibility condition between the three components
of the triples.

The second step consists now in extending the reduction of algebras to
bimodules. We show in Theorem~\ref{theorem:ReductionBicCoisoBimod}
that this is indeed possible and leads to a reduction functor
\begin{equation*}
    \reduce\colon \CoisoBimodTriple \longrightarrow \Bimodules
\end{equation*}
in the bicategory sense.  Since we have an honest functor between
bicategories, the reduction maps isomorphic objects to isomorphic
objects and hence preserves Morita equivalence being the notion of
isomorphism in $\CoisoBimodTriple$ and $\Bimodules$, respectively.
However, being a functor we get much more detailed information from
reduction, say a bigroupoid morphism of the corresponding Picard
bigroupoids, i.e. the bigroupoids of isomorphisms in these
bicategories.

For technical reasons it is convenient to consider only the components
$(\algebra{A}_\Wobs, \algebra{A}_\Null)$ of a coisotropic triple of
algebras leading to the notion of a coisotropic pair of algebras: the
reduction uses only this information. Now the construction of
$\CoisoBimodTriple$ can be adjusted to yield also a bicategory
$\CoisoBimodPair$ of bimodules over such pairs together with the
corresponding reduction functor
\begin{equation*}
    \reduce\colon \CoisoBimodPair \longrightarrow \Bimodules.
\end{equation*}

As we have seen, one of the main motivations to consider coisotropic
reduction is to pass from a classical to a quantum system and use the
classical data to investigate the reduced quantum system. Thus, in a
last step, we consider general deformations of coisotropic triples of
algebras and their bimodules. While a \emph{quantization} of bimodules
is typically obstructed, not unique, and fairly difficult to
understand in general, the \emph{classical limit} is always rather
easy to study and unobstructed. We define a classical limit of
coisotropic triples of algebras over a ring $\ring{R}\formal{\lambda}$
of formal power series in a formal parameter $\lambda$ with
coefficients in a ring $\ring{R}$ as the quotient by the ideals of
multiples of $\lambda$. The idea is that the algebras over
$\ring{R}\formal{\lambda}$ are interpreted as deformations of algebras
over $\ring{R}$. While the classical limit of the algebras is rather
straightforward, we then are able to extend the classical limit also
to bimodules leading to a functor
\begin{equation*}
    \cl\colon
    \CoisoBimodTriple_{\ring{R}\formal{\lambda}}
    \longrightarrow
    \CoisoBimodTriple_{\ring{R}}
\end{equation*}
of bicategories where now we explicitly indicate the underlying ring
of scalars. As before, we know that the classical limit preserves
Morita equivalence and yields a bigroupoid morphism between the Picard
bigroupoids. In
e.g. \cite{bursztyn.waldmann:2004a,bursztyn.waldmann:2005b} it was
demonstrated that a similar classical limit can be successfully used
to determine the Picard groups of deformed algebras and thus their
Morita theory.

The final result is now that the two functors $\reduce$ and $\cl$
commute in the sense of functors between bicategories: we explicitly
construct the relevant natural transformations and modifications in
Theorem~\ref{theorem:ClassicalLimitCommutesWithReduction} to obtain a
commutative diagram
\begin{equation*}
    \begin{tikzcd}
        \CoisoBimodPair_{\ring{R}[[\lambda]]}
        \arrow{r}{\cl}
        \arrow{d}[swap]{\reduce}
        & \CoisoBimodPair_\ring{R}
        \arrow{d}{\reduce} \\
        \Bimodules_{\ring{R}[[\lambda]]}
        \arrow{r}{\cl}
        & \Bimodules_\ring{R}
    \end{tikzcd}
\end{equation*}
of functors between bicategories. Here it suffices to restrict to
coisotropic pairs instead of triples since the reduction only uses the
information of pairs anyway. In particular, the functors restrict to
commuting bigroupoid morphisms for the corresponding Picard
bigroupoids thus encoding the behaviour of Morita equivalence under
classical limit and reduction completely.

After arriving at this conceptually clear and fairly general picture
of how coisotropic reduction extends to bimodules and relates to the
classical limit several questions arise. We do not address their
solutions in this work but come back to them later on.
\begin{enumerate}
\item The question of \emph{quantization} of coisotropic algebras and
    their modules now becomes more urgent, once having understood
    their classical limit. Here a question of particular interest is
    to understand the quantization of equivalence bimodules provided
    the quantization of the algebras is given.  One can then use
    commutativity of reduction and classical limit to actually find a
    good classification of coisotropic triples of e.g. star product
    algebras in geometric terms like the characteristic classes of the
    underlying star products. This should eventually lead to a
    comparison of the Morita classification of equivariant star
    products initiated in \cite{jansen.waldmann:2006a,
      jansen.neumaier.schaumann.waldmann:2012a}, see also
    \cite{reichert:2017a, reichert.waldmann:2016a,
      esposito.dekleijn.schnitzer:2018a:pre}, extending the Morita
    classification from \cite{bursztyn.dolgushev.waldmann:2012a,
      bursztyn.waldmann:2002a, bursztyn.waldmann:2004a}. On the
    classical level, coisotropic relations provide particular
    coisotropic triples which can then also be taken as starting point
    for quantization \cite{ciccoli:2019a:privat}.
\item The geometric nature of the description of the equivalence
    bimodules from Theorem~\ref{theorem:MEBimoduleStructure} has to be
    clarified further. Here the case of star products is again the
    guiding example and raises the questions what the
    \emph{semi-classical} limit is: the first order structures of
    equivalence bimodules should give analogs of covariant (or
    contravariant) derivatives, now adapted to the coisotropic triple
    point of view analogously to the ordinary case
    \cite{bursztyn.waldmann:2004a, bursztyn:2002a}.
\item Already on the classical level one can try to incorporate the
    first order structures, i.e. the Poisson structures, into the game
    at a more fundamental level. Then a more geometric approach to
    Morita theory in this context could ultimately lead to a notion of
    Morita equivalence of coisotropic triples in Poisson geometry
    yielding the usual Morita equivalence for the reduced Poisson
    manifolds, see \cite{xu:1991a}. Then the question of the behaviour
    of Picard groups as studied in \cite{bursztyn.fernandes:2015a:pre,
      bursztyn.weinstein:2004a} under reduction would be one of the
    first tasks.
\item We know that the reduction functor maps equivalence bimodules
    between the triples to usual equivalence bimodules between the
    reduced algebras. Which other classes of (bi-)modules behave well
    under reduction? Here we want to find suitable criteria to obtain
    e.g. projective modules etc. making contact to the geometric
    framework of reducing vector bundles.
\item A final longer term project is to incorporate $^*$-involutions
    into the definition of coisotropic triples. For applications in
    mathematical physics this is of course crucial but requires some
    severe changes: the main obstacle is that there is no reasonable
    compatibility to require between a left ideal and a
    $^*$-involution. The naive compatibility that the left ideal is
    closed under the involution yields immediately a two-sided ideal
    which in the examples of deformation quantization is known to be
    not relevant at all. One idea would be to require the existence of
    a positive functional having the left ideal as Gel'fand ideal as
    this was done in \cite{gutt.waldmann:2010a} and induce a
    $^*$-involution for the reduced algebra this way. Nevertheless, at
    the moment it seems to be quite unclear how to incorporate the
    corresponding structures like algebra-valued inner products on the
    modules as in \cite{bursztyn.waldmann:2005b,
      bursztyn.waldmann:2001a}. Ultimately, one would like to have a
    definition for \emph{strong Morita equivalence} of coisotropic
    triples of algebras.
\end{enumerate}

The paper is organized as follows: in
Section~\ref{sec:RedcutionPoissonGeometryDQ} we recall the basic
constructions of coisotropic reduction together with some principal
examples. Section~\ref{sec:CoisotropicTriplesPairsAlgebras} contains
the definition of coisotropic triples and pairs of algebras together
with some first functorial properties. The bicategories of bimodules
over coisotropic triples and pairs are constructed in
Section~\ref{sec:TriplePairBimodules} while
Section~\ref{sec:MoritaEquivalence} contains the characterization of
Morita equivalence bimodules. The reduction functor for bimodules is
constructed in Section~\ref{sec:ReductionBimodules}. Finally,
Section~\ref{sec:ClassicalLimit} contains the definition of the
classical limit functor together with the proof of our main result
that classical limit commutes with reduction. In a small appendix we
collect the basic definitions of bicategories, functors, natural
transformations, and modifications as unfortunately there are several
competing versions in the literature: we want to make clear which
definitions we are actually using.

\noindent
\textbf{Acknowledgements:} It is a pleasure to thank Martin Bordemann,
Henrique Burzstyn, Nicola Ciccoli and Rui Loja Fernandes for important
remarks and ideas on this project.

%
%

\section{Preliminaries}
\label{sec:RedcutionPoissonGeometryDQ}

In this section we recall some well-known reduction constructions in
the settings of Poisson geometry and deformation quantization to fix
our notation.

Let $(M, \pi)$ be a Poisson manifold, which models the phase space of
a classical mechanical system, and assume that
$\iota\colon C \longrightarrow M$ is a closed coisotropic submanifold
of $M$, called the \emph{constraint surface}. We denote the vanishing
ideal of $C$ by
\begin{equation}
    \label{eq:VanishingIdealC}
    \algebra{I}_C
    =
    \left\{
        f \in \Cinfty(M)
        \; \big\vert \;
        \iota^* f = 0
    \right\},
\end{equation}
which is clearly an ideal in the associative commutative algebra
$\Cinfty(M)$ of smooth functions on $M$.
\begin{lemma}
    \label{lemma:CoisotropicEquivalences}%
    Let $(M, \pi)$ be a Poisson manifold and $C\subseteq M$ be a
    submanifold.  Then the following statements are equivalent:
    \begin{lemmalist}
    \item The submanifold $C$ is coisotropic.
    \item For a function $f \in \algebra{I}_C$ the Hamiltonian vector
        field $X_f = (\D f)^\sharp \in \Secinfty(TM)$ is tangent to
        $C$, i.e. $X_f(p) \in T_pC \subseteq T_pM$ for all $p \in C$.
    \item Its vanishing ideal $\algebra{I}_C$ is a Poisson subalgebra
        of $\Cinfty(M)$.
    \end{lemmalist}
\end{lemma}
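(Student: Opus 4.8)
The plan is to funnel all three equivalences through the elementary observation that for every $p \in C$ the annihilator $(T_pC)^\circ \subseteq T_p^*M$ of $T_pC$ equals $\{\D f(p) \mid f \in \algebra{I}_C\}$. So the first step is to prove this auxiliary fact. One inclusion is immediate: for $f \in \algebra{I}_C$ and $v = \dot\gamma(0) \in T_pC$ with $\gamma$ a curve in $C$, the function $f \circ \gamma$ vanishes identically, hence $\langle \D f(p), v\rangle = (f\circ\gamma)'(0) = 0$, so $\D f(p) \in (T_pC)^\circ$. For the reverse inclusion one writes $C$ near $p$ as the common zero set of functions $h_1, \dots, h_k$ with linearly independent differentials at $p$, where $k = \dim M - \dim C$; multiplying each $h_i$ by a bump function supported in a small neighbourhood of $p$ produces a globally defined function still lying in $\algebra{I}_C$ and with the same differential at $p$. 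Since $\dim (T_pC)^\circ = k$ and the $\D h_i(p)$ are linearly independent elements of $(T_pC)^\circ$, they span it.

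With this identification at hand, (i) $\Leftrightarrow$ (ii) is merely a reformulation of the definition of a coisotropic submanifold: $C$ is coisotropic iff $\alpha^\sharp \in T_pC$ for all $p \in C$ and all $\alpha \in (T_pC)^\circ$, and since $X_f(p) = (\D f(p))^\sharp$ and the covectors $\D f(p)$ exhaust $(T_pC)^\circ$ as $f$ ranges over $\algebra{I}_C$, this is exactly the tangency assertion in (ii).

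For (ii) $\Rightarrow$ (iii): the vanishing ideal $\algebra{I}_C$ is an associative ideal, in particular a subalgebra for the pointwise product, so it only remains to check closedness under the Poisson bracket. For $f, g \in \algebra{I}_C$ and $p \in C$ the value $\{f, g\}(p)$ equals, up to the overall sign fixed by the bracket convention, the pairing $\langle \D g(p), X_f(p)\rangle$; here $X_f(p) \in T_pC$ by (ii), while $\D g(p) \in (T_pC)^\circ$ because $g \in \algebra{I}_C$, so the pairing vanishes and $\{f, g\} \in \algebra{I}_C$. Conversely, for (iii) $\Rightarrow$ (ii), let $f \in \algebra{I}_C$ and $p \in C$; for every $g \in \algebra{I}_C$ we have $\langle \D g(p), X_f(p)\rangle = \pm \{f, g\}(p) = 0$ since $\{f, g\} \in \algebra{I}_C$, and as the $\D g(p)$ span $(T_pC)^\circ$, the vector $X_f(p)$ annihilates all of $(T_pC)^\circ$ and hence lies in $(T_pC)^{\circ\circ} = T_pC$, which is (ii). This closes the cycle.

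The only ingredient going beyond linear algebra in the cotangent space is the auxiliary fact identifying $(T_pC)^\circ$ with the differentials of functions in $\algebra{I}_C$; its single subtlety is that the defining functions $h_i$ of $C$ are only local, so one must promote them to global functions in $\algebra{I}_C$ via a bump function while retaining control of their differentials at $p$. Everything else is just the standard identity tying together the Poisson bracket, Hamiltonian vector fields, and the pairing of differentials with tangent vectors, applied pointwise along $C$.
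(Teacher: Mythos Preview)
Your argument is correct and complete. The paper, however, does not supply a proof of this lemma at all: it is stated in the preliminaries section as a standard and well-known characterisation of coisotropic submanifolds, and the text moves on immediately afterwards. So there is no approach in the paper to compare yours against; what you have written is a clean self-contained proof of a result the authors chose to quote without justification.
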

Note that in most interesting situations, $\algebra{I}_C$ will not be
a Poisson ideal: this is equivalent to the statement that $C$ is even
a Poisson submanifold, a situation which is rarely of interest in the
context of reduction.

The distribution on $C$ spanned by the Hamiltonian vector fields $X_f$
of functions $f \in \algebra{I}_C$ is called the \emph{characteristic
  distribution} of $C$.  It turns out that this distribution is
integrable and, under suitable circumstances, has a nice leaf space
$C \big/ \mathord{\sim}$. For simplicity, we assume that the leaf
space is a smooth manifold and the projection onto the leaf space
\begin{equation}
    \label{eq:ProjectionOntoReducedSpace}
    \pr\colon C
    \longrightarrow
    \faktor{C}{\mathord{\sim}}
    =: M_\red
\end{equation}
is a surjective submersion.  In this case, $M_\red$ is itself a
Poisson manifold with Poisson structure determined as follows: one can
characterize the functions on $C$ which are constant along the leaves
as restrictions of functions $f \in \Cinfty(M)$ with the property that
$\{f, g\} \in \algebra{I}_C$ for all $g \in \algebra{I}_C$, i.e. as
the Lie normalizer (or Lie idealizer) of $\algebra{I}_C$ inside
$\Cinfty(M)$. We denote this normalizer as
\begin{equation}
    \label{eq:LieNormalizer}
    \algebra{B}_C
    =
    \left\{
        f \in \Cinfty(M)
        \; \big\vert \;
        \iota^*(X_g f) = 0
        \textrm{ for all }
        g \in \algebra{I}_C
    \right\}.
\end{equation}
It is now an easy verification that $\algebra{B}_C$ is a Poisson
subalgebra of all functions and
$\algebra{I}_C \subseteq \algebra{B}_C$ is a Poisson ideal in its
normalizer. Thus, as an immediate consequence we obtain the following
claim.
\begin{lemma}
    \label{lemma:BcJCisPoisson}%
    Let $(M, \pi)$ be a Poisson manifold and $C\subseteq M$ be a
    coisotropic submanifold.  Then the quotient
    $\algebra{B}_C \big/ \algebra{I}_C$ is a Poisson algebra.
\end{lemma}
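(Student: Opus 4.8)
The plan is to unwind the definitions and reduce everything to the observation that $\algebra{B}_C$ is exactly the Lie normalizer of $\algebra{I}_C$ inside the Poisson algebra $\Cinfty(M)$. Writing $X_g f = \{f, g\}$ as in the text, the defining condition $\iota^*(X_g f) = 0$ for all $g \in \algebra{I}_C$ just says $\{f, g\} \in \algebra{I}_C$ for all $g \in \algebra{I}_C$, so that
\begin{equation*}
    \algebra{B}_C
    =
    \left\{ f \in \Cinfty(M) \;\big\vert\; \{f, \algebra{I}_C\} \subseteq \algebra{I}_C \right\}.
\end{equation*}
From this description the required properties follow by routine verifications, exactly as indicated by the remark preceding the statement.

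First I would record that $\algebra{I}_C \subseteq \algebra{B}_C$: this is precisely the content of Lemma~\ref{lemma:CoisotropicEquivalences}, since $C$ being coisotropic means $\algebra{I}_C$ is a Poisson subalgebra, i.e. $\{\algebra{I}_C, \algebra{I}_C\} \subseteq \algebra{I}_C$. Next, $\algebra{B}_C$ is an associative subalgebra of $\Cinfty(M)$: for $f, f' \in \algebra{B}_C$ and $g \in \algebra{I}_C$ the Leibniz rule gives $\{ff', g\} = f\{f', g\} + f'\{f, g\}$, which lies in $\algebra{I}_C$ because $\algebra{I}_C$ is an associative ideal in $\Cinfty(M)$ and $\{f, g\}, \{f', g\} \in \algebra{I}_C$. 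That $\algebra{B}_C$ is closed under the Poisson bracket is the only point requiring the Jacobi identity: for $f, f' \in \algebra{B}_C$ and $g \in \algebra{I}_C$ one has
\begin{equation*}
    \{\{f, f'\}, g\}
    =
    \{f, \{f', g\}\} + \{\{f, g\}, f'\},
\end{equation*}
and since $\{f', g\}, \{f, g\} \in \algebra{I}_C$ while $f, f' \in \algebra{B}_C$, both summands lie in $\algebra{I}_C$ by the normalizer condition; hence $\{f, f'\} \in \algebra{B}_C$. Thus $\algebra{B}_C$ is a Poisson subalgebra of $\Cinfty(M)$.

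Then I would check that $\algebra{I}_C$ is a two-sided Poisson ideal in $\algebra{B}_C$: it is an associative ideal already in all of $\Cinfty(M)$, hence a fortiori in $\algebra{B}_C$, and it is a Lie ideal in $\algebra{B}_C$ directly from the defining property $\{\algebra{B}_C, \algebra{I}_C\} \subseteq \algebra{I}_C$. Consequently both the associative product and the Poisson bracket descend to the quotient $\algebra{B}_C \big/ \algebra{I}_C$, and the Leibniz rule, antisymmetry and Jacobi identity are inherited from $\algebra{B}_C$, since all of these are preserved under passing to the quotient by an ideal for the associative and the Lie structure simultaneously. This yields the claimed Poisson algebra structure on $\algebra{B}_C \big/ \algebra{I}_C$.

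There is essentially no hard step: the statement is an immediate consequence of the preceding discussion. The only place deserving a moment's care is the closure of $\algebra{B}_C$ under the bracket, where one must invoke the Jacobi identity in the right form and use that the normalizer condition is available for \emph{both} arguments $f$ and $f'$; everything else is bookkeeping with the fact that $\algebra{I}_C$ is simultaneously an associative ideal and a Lie ideal inside its Lie normalizer $\algebra{B}_C$.
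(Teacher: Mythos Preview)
Your proof is correct and follows exactly the approach the paper indicates: the paper does not give a proof of this lemma but simply remarks beforehand that ``It is now an easy verification that $\algebra{B}_C$ is a Poisson subalgebra of all functions and $\algebra{I}_C \subseteq \algebra{B}_C$ is a Poisson ideal in its normalizer,'' and you have spelled out precisely that verification.
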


Finally, we can observe that the pull-back with the projection yields
an isomorphism
\begin{equation}
    \label{eq:IsoCinftyMredBCIC}
    \pr^*\colon
    \Cinfty(M_\red)
    \longrightarrow
    \faktor{\algebra{B}_C}{\algebra{I}_C}
\end{equation}
of associative algebras. Since the right hand side is a Poisson
algebra in a natural way, this induces the Poisson structure
$\pi_\red$ on the reduced space $M_\red$ whenever $M_\red$ is
a manifold at all with $\pr$ being a surjective submersion. In this
case, the isomorphism turns it into a Poisson manifold as claimed.
But even if this geometric assumption is not satisfied, we can take
$\algebra{B}_C \big/ \algebra{I}_C$ as a valid replacement for
$M_\red$.

\begin{example}[Marsden-Weinstein reduction, classical]
    \label{example:MarsdenWeinstein}%
    A particular but important case of the above procedure is the
    Marsden-Weinstein reduction. Here one assumes to have a smooth
    action $\Phi\colon G \times M \longrightarrow M$ of a connected
    Lie group such that the Poisson structure $\pi$ is preserved.
    Moreover, one requires an $\ad^*$-equivariant momentum map
    $J\colon M \longrightarrow \liealg{g}^*$ where $\liealg{g}^*$ is
    the dual of the Lie algebra $\liealg{g}$ of $G$, i.e. for all
    $\xi \in \liealg{g}$ the fundamental vector field
    $\xi_M \in \Secinfty(TM)$ is given by
    $\xi_M = - X_{J_\xi} = \{J_\xi, \argument\}$ where we define the
    scalar function $J_\xi \in \Cinfty(M)$ as the function obtained by
    pairing the result of $J$ with $\xi$. The equivariance then reads
    as $\{J_\xi, J_\eta\} = J_{[\xi, \eta]}$ for all
    $\xi, \eta \in \liealg{g}$. Equivalently, $J$ is a Poisson map
    with respect to the linear Poisson structure on
    $\liealg{g}^*$. Now one considers the zero level set
    $C = J^{-1}(\{0\})$ of $J$, provided 0 is a regular value and
    $C \ne \emptyset$. Then $C$ is indeed coisotropic and the
    foliation of $C$ is just the foliation by orbits of $G$. Hence in
    this case
    \begin{equation}
        \label{eq:BCforMarsdenWeinstein}
        \algebra{B}_C
        =
        \left\{
            f \in \Cinfty(M)
            \; \big\vert \;
            \iota^* f
            \textrm{ is $G$-invariant}
        \right\}.
    \end{equation}
    Moreover, $M_\red = C \big/ G$, provided the action of $G$ on $C$
    is sufficiently nice: here we assume that $G$ acts freely and
    properly on $C$ so that $\pr\colon C \longrightarrow M_\red$
    becomes a $G$-principal fiber bundle. There are of course many
    generalizations of this particularly simple situation allowing for
    less restrictive assumptions, see e.g. the textbooks
    \cite{ortega.ratiu:2004a, marsden.ratiu:1999a} for further
    information.
\end{example}
\begin{example}
    \label{example:PoissonLieExample}%
    Another example comes from the setting of actions of a Poisson Lie
    group $(G, \pi_G)$ on a Poisson manifold $(M, \pi)$.  One assumes
    to have a smooth action $\Phi\colon G \times M \longrightarrow M$
    of a Poisson Lie group that sends the Poisson structure
    $\pi_G \oplus \pi$ into $\pi$.  In this case a momentum map, if it
    exists, is a map $J\colon M \longrightarrow G^*$ where $G^*$ is
    the dual of the Poisson Lie group $(G, \pi_G)$. Its definition has
    been introduced in \cite{lu:1990a}.  Assuming that $J$ is a
    Poisson map, one can easily see that for any dressing orbit
    $\mathcal{O}_\mu$ its preimage $C := J^{-1}(\{\mathcal{O}_\mu\})$
    by $J$ is a coisotropic submanifold of $M$.  Thus in a similar way
    as the case discussed above, we can obtain a reduced Poisson
    manifold. For further details see \cite{esposito:2014a}.
    Furthermore, the relation between coisotropic submanifolds and
    left ideals in this setting has been proposed in \cite{lu:1993a}.
\end{example}

As a next step we want to incorporate the quantum picture as
well. Here we choose the approach of deformation quantization
\cite{bayen.et.al:1978a}, see e.g. \cite{waldmann:2007a} for an
introduction. Thus we assume to have a formal star product $\star$
given on $(M, \pi)$, i.e. a $\mathbb{C}[[\lambda]]$-bilinear
associative product for $\Cinfty(M)[[\lambda]]$ written as
\begin{equation}
    \label{eq:StarProduct}
    f \star g
    =
    \sum_{r=0}^\infty \lambda^r C_r(f, g)
\end{equation}
with bilinear operators
$C_r\colon \Cinfty(M) \times \Cinfty(M) \longrightarrow \Cinfty(M)$
extended $\mathbb{C}[[\lambda]]$-bilinearly as usual, such that
\begin{equation}
    \label{eq:StarProductLowestOrders}
    C_0(f, g) = fg
    \quad
    \textrm{and}
    \quad
    C_1(f, g) - C_1(g, f) = \I\{f, g\}
\end{equation}
for all $f, g \in \Cinfty(M)$, the constant function $1$ is the unit
of $\star$, and $C_r$ is bidifferential for all $r \in \mathbb{N}_0$.
The resulting algebra is denoted by
$\qalgebra{A} = (\Cinfty(M)[[\lambda]], \star)$. The formal parameter
$\lambda$ corresponds in convergent situations to the physical Planck
constant $\hbar$.

To formulate a quantum version of reduction, we first need to
introduce a quantum analog of the ideal and Poisson subalgebra
$\algebra{I}_C$. One way consists in requiring the existence of a
deformation of $\iota^*$ into a quantum restriction map
\begin{equation}
    \label{eq:QuantumRestrictionMap}
    \deform{\iota^*}
    =
    \iota^* \circ S
    \quad
    \textrm{with}
    \quad
    S = \id + \sum_{r=1}^\infty \lambda^r S_r,
\end{equation}
where $S_r\colon \Cinfty(M) \longrightarrow \Cinfty(M)$ are
differential operators to be found in such a way that
\begin{equation}
    \label{eq:KernelQuantumRestriction}
    \qalgebra{I}_C
    =
    \ker \deform{\iota^*}
    =
    \left\{
        f \in \Cinfty(M)[[\lambda]]
        \; \big\vert \;
        \deform{\iota^*} f = 0
    \right\}
\end{equation}
becomes a left ideal with respect to $\star$. As before, we can then
consider the functions on the constraint surface $C$ and find that
\begin{equation}
    \label{eq:FunctionsOnC}
    \deform{\iota^*}\colon
    \faktor{\Cinfty(M)[[\lambda]]}{\qalgebra{I}_C}
    \longrightarrow
    \Cinfty(C)[[\lambda]]
\end{equation}
becomes an isomorphism thanks to the above assumption that
$\deform{\iota^*}$ starts with $\iota^*$ in zeroth order of
$\lambda$. Now we can proceed as in the classical case by considering
the \emph{normalizer}, now in the associative sense, i.e.
\begin{equation}
    \label{eq:NormalizerQuantum}
    \qalgebra{B}_C
    =
    \normalizer(\qalgebra{J}_C)
    =
    \left\{
        f \in \Cinfty(M)[[\lambda]]
        \; \big\vert \;
        g \star f \in \qalgebra{I}_C
        \textrm{ for all }
        g \in \qalgebra{I}_C
    \right\}.
\end{equation}
Note that this is equivalent to the condition
$[f, g]_\star \in \qalgebra{I}_C$ for all $g \in \qalgebra{I}_C$ since
$\qalgebra{I}_C$ is already a left ideal. A simple check shows that
$\qalgebra{B}_C$ is a subalgebra of $\Cinfty(M)[[\lambda]]$ and
$\qalgebra{I}_C$ is a two-sided ideal in $\qalgebra{B}_C$. Thus it is
tempting to define the reduction on the quantum side as the quotient
algebra $\qalgebra{A}_\red = \qalgebra{B}_C \big/ \qalgebra{I}_C$ in
complete analogy to the above classical case.

We point out now an alternative but equivalent definition of this
reduced algebra:
\begin{proposition}
    \label{proposition:ReductionModuleAlaMartin}%
    The functions on the constraint surface become a left module of
    the algebra $\qalgebra{A}$ by \eqref{eq:FunctionsOnC} in a
    canonical way. Moreover, the module endomorphisms of this left
    module are isomorphic to the opposite algebra of
    $\qalgebra{A}_\red$ via
    \begin{equation}
        \label{eq:EndoAmodICtoBCmodIC}
        \faktor{\qalgebra{B}_C}{\qalgebra{I}_C}
        \ni [f]
        \; \longmapsto \;
        ([g] \longmapsto [g \star f])
        \in
        \End_{\qalgebra{A}}(\Cinfty(C)[[\lambda]])^\opp,
    \end{equation}
    where $[g]$ denotes an equivalence class in the quotient
    \eqref{eq:FunctionsOnC}.
\end{proposition}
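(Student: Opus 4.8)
The plan is to deduce the statement from the standard ring-theoretic description of the endomorphisms of a cyclic module $\qalgebra{A}\big/\qalgebra{I}_C$ in terms of the idealizer (normalizer) of $\qalgebra{I}_C$, and then to transport everything along the isomorphism \eqref{eq:FunctionsOnC}. First I would observe that, since $\qalgebra{I}_C$ is a left ideal of $\qalgebra{A}$, the quotient $\Cinfty(M)[[\lambda]]\big/\qalgebra{I}_C$ carries the canonical left $\qalgebra{A}$-module structure $f\cdot[h]=[f\star h]$, and that $\deform{\iota^*}$ induces an isomorphism of left $\qalgebra{A}$-modules by \eqref{eq:FunctionsOnC}; equivalently, the module structure on $\Cinfty(C)[[\lambda]]$ claimed in the proposition is $f\bullet\deform{\iota^*}(h)=\deform{\iota^*}(f\star h)$, which is well defined precisely because $\ker\deform{\iota^*}=\qalgebra{I}_C$ is a left ideal. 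It therefore suffices to describe $\End_{\qalgebra{A}}\bigl(\Cinfty(M)[[\lambda]]\big/\qalgebra{I}_C\bigr)$, and since $\qalgebra{A}$ is unital, $\qalgebra{A}$-linearity automatically includes $\mathbb{C}[[\lambda]]$-linearity, so no completeness arguments enter.

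The key point is that the class $[1]$ of the unit generates $\Cinfty(M)[[\lambda]]\big/\qalgebra{I}_C$ as a left $\qalgebra{A}$-module, so any module endomorphism $\phi$ is determined by $\phi([1])$ via $\phi([g])=\phi(g\cdot[1])=g\cdot\phi([1])$. Choosing a representative $f$ with $\phi([1])=[f]$ — possible since $\deform{\iota^*}$ is surjective — we get $\phi=\phi_f$ with $\phi_f([g]):=[g\star f]$. The assignment $f\mapsto\phi_f$ does not depend on the chosen representative of $[f]$, because $\qalgebra{I}_C$ is a left ideal (if $f'-f\in\qalgebra{I}_C$ then $g\star f'-g\star f=g\star(f'-f)\in\qalgebra{I}_C$), while the formula $[g]\mapsto[g\star f]$ is well defined as a function of $[g]$ if and only if $\qalgebra{I}_C\star f\subseteq\qalgebra{I}_C$, i.e. if and only if $f\in\normalizer(\qalgebra{I}_C)=\qalgebra{B}_C$ in the notation of \eqref{eq:NormalizerQuantum}. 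Hence $[f]\mapsto\phi_f$ is a well-defined bijection $\qalgebra{B}_C\big/\qalgebra{I}_C\to\End_{\qalgebra{A}}\bigl(\Cinfty(C)[[\lambda]]\bigr)$: surjectivity is the preceding discussion, and injectivity follows from $\phi_f([1])=[f]$.

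Finally I would check that this bijection is multiplicative into the opposite algebra by the computation $(\phi_{f'}\circ\phi_f)([g])=\phi_{f'}([g\star f])=[g\star f\star f']=\phi_{f\star f'}([g])$, so that $\phi_{f'}\circ\phi_f=\phi_{f\star f'}$; in $\End_{\qalgebra{A}}(\Cinfty(C)[[\lambda]])^\opp$ this says exactly that $[f]\mapsto\phi_f$ respects products, and since $[1]\mapsto\id$ it is a unital algebra isomorphism $\qalgebra{B}_C\big/\qalgebra{I}_C\to\End_{\qalgebra{A}}(\Cinfty(C)[[\lambda]])^\opp$, which is precisely \eqref{eq:EndoAmodICtoBCmodIC} since $\qalgebra{A}_\red=\qalgebra{B}_C\big/\qalgebra{I}_C$. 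I do not expect a genuine obstacle here: the argument is purely algebraic and uses only that $\qalgebra{I}_C$ is a left ideal. The only points needing (minor) care are verifying that the transported module structure on $\Cinfty(C)[[\lambda]]$ really is the intended canonical one, and keeping the left/right bookkeeping straight so that the opposite algebra appears on the correct side.
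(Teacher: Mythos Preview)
Your proof is correct and follows exactly the standard ring-theoretic argument the paper has in mind. In fact, the paper does not spell out a proof of this proposition (it simply attributes the observation to Bordemann), but the same reasoning is sketched later in the proof of Proposition~\ref{proposition:CanonicalBimodule}: the quotient is cyclic with cyclic vector $[\Unit]$, and the opposite of the endomorphism algebra is given by right multiplications with classes in $\normalizer(\qalgebra{I}_C)\big/\qalgebra{I}_C$. Your write-up makes all of this explicit, including the well-definedness, bijectivity, and the opposite-multiplicativity check, so there is nothing to add.
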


This idea from \cite{bordemann:2005a} puts the role of the constraint
surface in a much clearer light: it carries a bimodule structure for
the original big algebra $\qalgebra{A}$ acting from the left and the
reduced algebra acting from the right such that the reduced algebra
coincides with (the opposite of) the module endomorphisms. Note,
however, that the $\qalgebra{A}_\red$-endomorphisms contain
$\qalgebra{A}$ but are typically strictly larger, see
\cite{gutt.waldmann:2010a}. In particular, this bimodule is typically
\emph{not} a Morita equivalence bimodule.

It is now a final check necessary to show that $\qalgebra{A}_\red$
defined as above actually gives a deformation quantization of the
reduced Poisson manifold $(M_\red, \pi_\red)$. This is by far not
trivial and in fact not true in general. Simple examples of
ill-adjusted star products on $M$ where this fails are discussed
e.g. in \cite{bordemann.herbig.waldmann:2000a}. More profound
obstructions are discussed in \cite{bordemann:2005a,
  bordemann:2004a:pre} in the symplectic case and in
\cite{cattaneo.felder:2007a, cattaneo.felder:2004a, cattaneo:2004a}
for the Poisson case.

However, in many reasonably nice situations the program can be carried
through and yields a star product $\star_\red$ for $M_\red$, see again
\cite{bordemann:2005a, bordemann:2004a:pre}: whenever the reduced
space $M_\red$ exists in the case of a symplectic manifold $M$, then
one can find a suitable star product $\star$ on $M$ for which the
construction yields a reduced star product $\star_\red$. A more
specific situation is the analog of the Marsden-Weinstein reduction,
the construction relying on BRST cohomological arguments
\cite{bordemann.herbig.waldmann:2000a}:
\begin{example}[Marsden-Weinstein reduction, quantum]
    \label{example:QuantumMarsdenWeinstein}%
    Suppose that we are in the same situation as in
    Example~\ref{example:MarsdenWeinstein}. Suppose moreover, that
    $\star$ is a star product on $M$, invariant under the group action
    $\Phi$ which allows for a quantum momentum map
    $\deform{J} = J + \sum_{r=1}^\infty \lambda^r J_r$, i.e. one has
    $[\deform{J}_\xi, f]_\star = \I\lambda \xi_M f$ for all
    $f \in \Cinfty(M)[[\lambda]]$ and
    $[\deform{J}_\xi, \deform{J}_\eta]_\star = \I\lambda
    \deform{J}_{[\xi, \eta]}$.
    Then one can construct a deformation $\deform{\iota^*}$ as needed
    and $\qalgebra{A}_\red$ turns out to be isomorphic to
    $\Cinfty(M_\red)[[\lambda]]$ as $\mathbb{C}[[\lambda]]$-module
    inducing thereby a star product $\star_\red$. Moreover, explicit
    formulas for the bimodule structure on $\Cinfty(C)[[\lambda]]$ can
    be given, see e.g. \cite{gutt.waldmann:2010a}. The existence of
    invariant star products, quantum momentum maps, and the
    corresponding reduction is discussed in detail in
    \cite{mueller-bahns.neumaier:2004a, gutt.rawnsley:2003a,
      fedosov:1998a} culminating in classification results
    \cite{reichert:2017a, reichert.waldmann:2016a} in the symplectic
    case.  Finally, the existence and classification of equivariant
    star products in the Poisson case has been recently proved in
    \cite{esposito.dekleijn.schnitzer:2018a:pre}.
\end{example}

We will come back to this construction at several instances. It will
serve as the main motivation in the following: based on these
observations we shall put the reduction process into a purely
algebraic framework.

%
%

\section{Coisotropic Triples and Pairs of Algebras}
\label{sec:CoisotropicTriplesPairsAlgebras}

In the following we fix a commutative unital ring $\mathbb{k}$ of
scalars which in most situations will be even a field. It will
sometimes be convenient to assume $\mathbb{Q} \subseteq \mathbb{k}$.
All occurring algebras and modules will be over $\mathbb{k}$ and
linearity always will include linearity over $\mathbb{k}$.

As we want to discuss reduction with respect to some coisotropic data,
we start with some unital \emph{total algebra} $\algebra{A}_\Total$ in
which we suppose to have a left ideal
$\algebra{A}_\Null \subseteq \algebra{A}_\Total$. The correspondence
with the above geometric situation is that the total algebra stands
for the functions on the total phase space while the left ideal
corresponds to the functions vanishing on the constraint surface. To
ensure maximal flexibility, we need to specify an additional algebra,
which we call the \emph{weakly observables} according to Dirac's
original discussion of constraint systems in \cite{dirac:1950a,
  dirac:1964a}. We thus have to specify a unital subalgebra
$\algebra{A}_\Wobs \subseteq \algebra{A}_\Total$ containing the left
ideal $\algebra{A}_\Null$ as a two-sided ideal
$\algebra{A}_\Null \subseteq \algebra{A}_\Wobs$. With other words,
$\algebra{A}_\Wobs$ will be a unital subalgebra of the normalizer of
the left ideal, i.e.
\begin{equation}
    \label{eq:NullInWobsInNormalizer}
    \algebra{A}_\Null
    \subseteq
    \algebra{A}_\Wobs
    \subseteq
    \normalizer(\algebra{A}_\Null).
\end{equation}
However, note that we explicitly allow $\algebra{A}_\Wobs$ to be
strictly smaller than the normalizer of $\algebra{A}_\Null$: in the
commutative situation above we took the Lie normalizer with respect to
the additional Poisson bracket, a structure we do not want to
introduce at this level. Summarizing, this leads now to the following
definition of a coisotropic triple of algebras:
\begin{definition}[Coisotropic triple of algebras]
    \label{definition:CoisotropicTripleAlgebras}%
    A coisotropic triple of algebras over $\mathbb{k}$ is a triple
    $\algebra{A} = (\algebra{A}_\Total, \algebra{A}_\Wobs,
    \algebra{A}_\Null)$
    consisting of a unital algebra $\algebra{A}_\Total$, a unital
    subalgebra $\algebra{A}_\Wobs \subseteq \algebra{A}_\Total$, and a
    left ideal $\algebra{A}_\Null \subseteq \algebra{A}_\Total$ such
    that $\algebra{A}_\Null \subseteq \algebra{A}_\Wobs$ is a
    two-sided ideal.
\end{definition}

With this definition, our geometric situation provides us some first
and guiding examples:
\begin{example}[Coisotropic triples of algebras from geometry]
    \label{example:CoisotropicTripleAlgbra}%
    Let $M$ be a Poisson manifold with a coisotropic submanifold
    $\iota\colon C \longrightarrow M$.
    \begin{examplelist}
    \item \label{item:CoisotropicClassicalAgain} Setting
        $\algebra{A}_\Total = \Cinfty(M)$ and
        $\algebra{A}_\Null = \algebra{J}_C = \ker\iota^*$ gives a
        total algebra and a (left) ideal inside. However, since
        $\algebra{A}_\Total$ is commutative, taking the normalizer of
        $\algebra{A}_\Null$ would reproduce $\algebra{A}_\Total$, a
        too simple choice to be interesting. However, taking
        $\algebra{A}_\Wobs = \algebra{B}_C$ gives an interesting
        coisotropic triple of algebras over $\mathbb{C}$.
    \item \label{item:QuantumTriple} Suppose now in addition that we
        can find a star product $\star$ and a deformation
        $\deform{\iota^*}$ of the restriction map. Then
        $\algebra{A}_\Total = \Cinfty(M)[[\lambda]]$, equipped with
        the star product $\star$ serves as total algebra,
        $\algebra{A}_\Null = \ker\deform{\iota^*}$ will be the left
        ideal and the normalizer
        $\algebra{A}_\Wobs = \normalizer(\algebra{A}_\Null)$ can be
        taken as weakly observables. We obtain a truly noncommutative
        coisotropic triple is this case, over the ring
        $\mathbb{C}[[\lambda]]$ as underlying scalars.
    \end{examplelist}
\end{example}
\begin{example}
    \label{example:Ciccoli}%
    We recall a concrete example of coisotropic triples already
    discussed in \cite{ciccoli:1997a} following
    \cite{ciccoli:2019a:privat}. Let $\mathcal{E}_q (2)$ be the
    $*$-algebra generated by the following relations:
    \begin{align*}
        v v^{-1} &= 1 = v^{-1}v
        \\
        v n^* &= q^{-1}nv
        \\
        vn &= q nv
        \\
        n n^* &= n^* n,
    \end{align*}
    where $q$ is a real parameter. Denote by $\mathcal I_\lambda$
    the right ideal generated by $\lambda (v -1) + n$ and
    $\bar{\lambda} (v^{-1}-1) + n^*$, with $\lambda \in \mathbb C$.
    It is easy to see that $\mathcal{E}_q (2)$, $\mathcal I_\lambda$
    and the corresponding normalizer form a coisotropic triple.
\end{example}

Before investigating more examples and constructions we introduce the
following notion of morphisms between coisotropic triples of algebras:
we define a \emph{morphism} from the coisotropic triple
$\algebra{A} = (\algebra{A}_\Total, \algebra{A}_\Wobs,
\algebra{A}_\Null)$
to the coisotropic triple
$\algebra{B} = (\algebra{B}_\Total, \algebra{B}_\Wobs,
\algebra{B}_\Null)$ to be a unital algebra morphism
\begin{equation}
    \label{eq:MorphismTripleAlgebra}
    \Phi\colon
    \algebra{A}_\Total \longrightarrow \algebra{B}_\Total,
\end{equation}
such that
\begin{equation}
    \label{eq:MorphismWobsNullCompatible}
    \Phi(\algebra{A}_\Wobs) \subseteq \algebra{B}_\Wobs
    \quad
    \textrm{and}
    \quad
    \Phi(\algebra{A}_\Null) \subseteq \algebra{B}_\Null.
\end{equation}
It is clear that the composition of morphisms is again a morphism and
hence we ultimately obtain a category of coisotropic triples of
algebras which we denote by $\CoisoAlgTriple$ or
$\CoisoAlgTriple_{\mathbb{k}}$ if we want to emphasize the underlying
ring $\mathbb{k}$ of scalars.
\begin{remark}
    \label{remark:PairPointOfView}%
    If one would only focus on the pair
    $(\algebra{A}_\Total, \algebra{A}_\Null)$ then this notion of
    morphisms becomes less obvious: in that case a natural candidate
    for a morphism from one such pair to another would be a unital
    algebra morphism
    $\Phi\colon \algebra{A}_\Total \longrightarrow \algebra{B}_\Total$
    with $\Phi(\algebra{A}_\Null) \subseteq \algebra{B}_\Null$.
    However, simple examples show that then the normalizer
    $\normalizer(\algebra{A}_\Null)$ needs not to be mapped into the
    normalizer $\normalizer(\algebra{B}_\Null)$.  As we will base many
    constructions on the choice of $\algebra{A}_\Wobs$, we need to
    take care of this part of the triple by hand.
\end{remark}

We denote the category of unital algebras (with unital algebra
morphisms) by $\Algebras$ while the not necessarily unital algebras
are then denoted by $\algebras$. Then we have several obvious
functors. First, projecting on one of the three components of a triple
is of course functorial leading to functors
\begin{gather}
    \label{eq:FunctorsTripleToAlg}
    \CoisoAlgTriple \ni \algebra{A}
    \; \longmapsto \;
    \algebra{A}_\Total \in \Algebras
    \quad
    \textrm{and}
    \quad
    \CoisoAlgTriple \ni \algebra{A}
    \; \longmapsto \;
    \algebra{A}_\Wobs \in \Algebras
    \\
    \shortintertext{as well as}
    \label{eq:NullFunctor}
    \CoisoAlgTriple \ni \algebra{A}
    \; \longmapsto \;
    \algebra{A}_\Null \in \algebras,
\end{gather}
each with the obvious restriction of morphisms. But we can also go the
other way and build coisotropic triples out of single algebras. Here
we have several options. The first is the \emph{trivial triple}
\begin{equation}
    \label{eq:TrivialTriple}
    \algebra{A}_\Trivial = (\algebra{A}, \algebra{A}, \algebra{A})
\end{equation}
for a unital algebra $\algebra{A}$. Alternatively, we can construct
the \emph{un-reduce triple}
\begin{equation}
    \label{eq:UnReduceTriple}
    \algebra{A}_\Unreduce = (\algebra{A}, \algebra{A}, \{0\})
\end{equation}
for a unital algebra $\algebra{A}$. Both versions yield functors
$\Algebras \longrightarrow \CoisoAlgTriple$. Finally, more important
for our original motivation, is the \emph{Dirac triple} we can build
out of a pair of a unital algebra $\algebra{A}$ and a left ideal
$\algebra{J} \subseteq \algebra{A}$. Here we set
\begin{equation}
    \label{eq:CanonicalTriple}
    \algebra{A}_{\scriptstyle{\mathrm{Dirac}}}
    =
    (\algebra{A}, \normalizer(\algebra{J}), \algebra{J}).
\end{equation}
In view of Remark~\ref{remark:PairPointOfView} this becomes again
functorial if we consider the category $\LeftIdeals$ of pairs of
unital algebras with left ideals together with unital algebra
morphisms mapping one left ideal into the other \emph{and} mapping the
normalizer of the first left ideal into the normalizer of the
second. Then the canonical triple becomes a functor
\begin{equation}
    \label{eq:NormalizeFunctor}
    \functor{Dirac}\colon \LeftIdeals \longrightarrow \CoisoAlgTriple.
\end{equation}

After having established the notion of coisotropic triples one can
define the reduction of them in the following way, mimicking the
situation of star products in the geometric situation: Let
$\algebra{A} = (\algebra{A}_\Total, \algebra{A}_\Wobs,
\algebra{A}_\Null)$
be a coisotropic triple of algebras. Then the reduction of
$\algebra{A}$ is defined to be the unital algebra
\begin{equation}
    \label{eq:ReductionAlgebra}
    \algebra{A}_\red
    =
    \faktor{\algebra{A}_\Wobs}{\algebra{A}_\Null}.
\end{equation}
For a morphism $\Phi\colon \algebra{A} \longrightarrow \algebra{B}$
between coisotropic triples of algebras we see that the restriction of
$\Phi$ to the weak observables passes to the quotient and thus defines
a unital algebra morphism
\begin{equation}
    \label{eq:PhiReduced}
    \Phi_\red\colon
    \algebra{A}_\red \longrightarrow \algebra{B}_\red.
\end{equation}
Clearly, this yields a functorial reduction:
\begin{proposition}
    \label{proposition:CoisotropicTripleAlgebraReduction}%
    Reduction of coisotropic triples of algebras yields a functor
    \begin{equation}
        \label{eq:ReductionFunctorAlgebras}
        \reduce\colon
        \CoisoAlgTriple \longrightarrow \Algebras.
    \end{equation}
\end{proposition}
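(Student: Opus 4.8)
The plan is to verify the three defining properties of a functor: that $\reduce$ is well defined on objects, that it is well defined on morphisms, and that it respects identities and composition. The actual content already sits in Definition~\ref{definition:CoisotropicTripleAlgebras} and in the definition of morphisms via \eqref{eq:MorphismWobsNullCompatible}; what remains is essentially the standard fact that forming a two-sided quotient is functorial in the pair (algebra, ideal).

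\emph{Objects.} Given a coisotropic triple $\algebra{A} = (\algebra{A}_\Total, \algebra{A}_\Wobs, \algebra{A}_\Null)$, Definition~\ref{definition:CoisotropicTripleAlgebras} guarantees that $\algebra{A}_\Null$ is a two-sided ideal of the \emph{unital} algebra $\algebra{A}_\Wobs$. Hence the quotient $\algebra{A}_\red = \algebra{A}_\Wobs / \algebra{A}_\Null$ from \eqref{eq:ReductionAlgebra} inherits a well-defined associative multiplication, and the class of $1 \in \algebra{A}_\Wobs$ is a unit for it, so $\algebra{A}_\red \in \Algebras$. The total algebra $\algebra{A}_\Total$ enters here only through the requirement that $\algebra{A}_\Wobs$ be a unital subalgebra of it and plays no further role in the construction of $\algebra{A}_\red$.

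\emph{Morphisms.} For a morphism $\Phi\colon \algebra{A} \longrightarrow \algebra{B}$ of coisotropic triples, the first inclusion in \eqref{eq:MorphismWobsNullCompatible} says $\Phi(\algebra{A}_\Wobs) \subseteq \algebra{B}_\Wobs$, so $\Phi$ restricts to a unital algebra morphism $\algebra{A}_\Wobs \longrightarrow \algebra{B}_\Wobs$, and the second inclusion says this restriction maps $\algebra{A}_\Null$ into $\algebra{B}_\Null$. Composing with the canonical projection $\algebra{B}_\Wobs \longrightarrow \algebra{B}_\red$ therefore annihilates $\algebra{A}_\Null$ and, by the universal property of the quotient, descends to the unital algebra morphism $\Phi_\red\colon \algebra{A}_\red \longrightarrow \algebra{B}_\red$ of \eqref{eq:PhiReduced}, given explicitly by $\Phi_\red([a]) = [\Phi(a)]$ for $a \in \algebra{A}_\Wobs$.

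\emph{Functoriality.} From the formula $\Phi_\red([a]) = [\Phi(a)]$ it is immediate that $(\id_\algebra{A})_\red = \id_{\algebra{A}_\red}$, and for composable morphisms $\Phi\colon \algebra{A} \to \algebra{B}$ and $\Psi\colon \algebra{B} \to \algebra{C}$ one checks on representatives that $(\Psi \circ \Phi)_\red([a]) = [\Psi(\Phi(a))] = \Psi_\red(\Phi_\red([a]))$, where applying $\Psi_\red$ to $[\Phi(a)]$ is legitimate since $\Phi(a) \in \algebra{B}_\Wobs$. Hence $\reduce$ is a functor. I do not expect any genuine obstacle here: the argument is pure bookkeeping, and the only point worth stressing is that well-definedness of $\Phi_\red$ uses \emph{both} conditions in \eqref{eq:MorphismWobsNullCompatible} — which is exactly why $\CoisoAlgTriple$ was set up to carry the $\Wobs$-component explicitly, in the spirit of Remark~\ref{remark:PairPointOfView}.
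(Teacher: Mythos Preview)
Your proof is correct and matches the paper's approach; the paper in fact gives no formal proof, only the sentence ``Clearly, this yields a functorial reduction'' after having defined $\algebra{A}_\red$ and $\Phi_\red$ in \eqref{eq:ReductionAlgebra} and \eqref{eq:PhiReduced}. You have simply spelled out the routine verification that the paper omits.
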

\begin{corollary}
    \label{corollary:UnreduceTrivialTriple}%
    The reduction of the un-reduce triple is naturally isomorphic to
    the identity functor on $\Algebras$. The reduction of the trivial
    triple is naturally isomorphic to the zero-functor on $\Algebras$
    sending an algebra to the zero algebra $\{0\}$.
\end{corollary}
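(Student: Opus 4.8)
The plan is to exhibit the two natural isomorphisms explicitly from the canonical quotient maps. Since the un-reduce functor $\functor{unred}$ (the functor $\algebra{A} \mapsto \algebra{A}_\Unreduce$ of \eqref{eq:UnReduceTriple}) and the trivial functor $\functor{trivial}$ ($\algebra{A} \mapsto \algebra{A}_\Trivial$ of \eqref{eq:TrivialTriple}) are already known to be functors $\Algebras \to \CoisoAlgTriple$, and $\reduce$ is a functor by Proposition~\ref{proposition:CoisotropicTripleAlgebraReduction}, the composites $\reduce \circ \functor{unred}$ and $\reduce \circ \functor{trivial}$ make sense, so only the components of the isomorphisms and their naturality remain to be verified.

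For the un-reduce triple one has $\functor{unred}(\algebra{A}) = (\algebra{A}, \algebra{A}, \{0\})$, hence $\reduce(\functor{unred}(\algebra{A})) = \faktor{\algebra{A}}{\{0\}}$ by \eqref{eq:ReductionAlgebra}. The canonical projection $\eta_\algebra{A} \colon \algebra{A} \to \faktor{\algebra{A}}{\{0\}}$ is a unital algebra homomorphism, and it is bijective since the ideal being divided out is trivial; thus each $\eta_\algebra{A}$ is an isomorphism in $\Algebras$. I would then check that the family $\eta = (\eta_\algebra{A})_{\algebra{A} \in \Algebras}$ is natural: for a morphism $\Phi \colon \algebra{A} \to \algebra{B}$ the induced map on reductions is $\Phi_\red([a]) = [\Phi(a)]$ by \eqref{eq:PhiReduced} (using $\Phi(\{0\}) \subseteq \{0\}$), so evaluating on $a \in \algebra{A}$ gives $\Phi_\red(\eta_\algebra{A}(a)) = [\Phi(a)] = \eta_\algebra{B}(\Phi(a))$ and the naturality square commutes. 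As each component is invertible, $\eta$ is a natural isomorphism $\Id_\Algebras \Rightarrow \reduce \circ \functor{unred}$.

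For the trivial triple, $\functor{trivial}(\algebra{A}) = (\algebra{A}, \algebra{A}, \algebra{A})$, so $\reduce(\functor{trivial}(\algebra{A})) = \faktor{\algebra{A}}{\algebra{A}} = \{0\}$, the zero algebra (with $1 = 0$). On morphisms $\reduce \circ \functor{trivial}$ necessarily sends everything to the unique unital endomorphism of $\{0\}$, which is moreover the only algebra map between one-element algebras at all; hence $\reduce \circ \functor{trivial}$ literally coincides with the constant functor at the zero algebra, and the required natural isomorphism may be taken to be the identity, its naturality being the triviality that any diagram of maps between copies of $\{0\}$ commutes. (If one prefers not to identify the various $\faktor{\algebra{A}}{\algebra{A}}$ with a single fixed copy of $\{0\}$, the components are the unique algebra isomorphisms $\faktor{\algebra{A}}{\algebra{A}} \to \{0\}$.) I do not expect any genuine obstacle here: the statement is pure bookkeeping once functoriality of $\reduce$, $\functor{unred}$ and $\functor{trivial}$ is in hand; the only points meriting care are the correct application of the quotient construction \eqref{eq:ReductionAlgebra} to the degenerate ideals $\{0\}$ and $\algebra{A}$, and reading the ``zero-functor'' as the constant functor at the terminal (zero) algebra, so that in the second case the natural isomorphism is simply the identity.
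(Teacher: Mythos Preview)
Your proof is correct and is exactly the straightforward verification the paper has in mind; the paper in fact provides no explicit proof for this corollary, treating it as immediate from the definitions of $\functor{unred}$, $\functor{trivial}$, and $\reduce$. The only minor remark is that your write-up is more detailed than necessary for what is essentially a tautology once \eqref{eq:ReductionAlgebra} is applied to the degenerate ideals $\{0\}$ and $\algebra{A}$.
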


Surprisingly, the reduction uses only the information of the pair
$(\algebra{A}_\Wobs, \algebra{A}_\Null)$ instead of the full
triple. The ambient total algebra does not play a role here. This
raises of course the question whether one can not just start with a
category of \emph{coisotropic pairs} consisting of a unital algebra
together with a two-sided ideal inside. To some extend this is true
and many of the following constructions will only use the pair instead
of the triple. Thus we also state the definition of a coisotropic pair
as follows:
\begin{definition}[Coisotropic pair]
    \label{definition:CoisotropicPair}%
    A coisotropic pair of algebras is a pair
    $\algebra{A} = (\algebra{A}_\Wobs, \algebra{A}_\Null)$ of a unital
    associative algebra $\algebra{A}_\Wobs$ over $\mathbb{k}$ together
    with a two-sided ideal
    $\algebra{A}_\Null \subseteq \algebra{A}_\Wobs$. A morphism
    between two coisotropic pairs $\algebra{A}$ and $\algebra{B}$ is a
    unital algebra morphism
    $\Phi\colon \algebra{A}_\Wobs \longrightarrow \algebra{B}_\Wobs$
    with $\Phi(\algebra{A}_\Null) \subseteq \algebra{B}_\Null$.
\end{definition}
Clearly, this gives again a categorical framework for coisotropic
pairs of algebras. We denote the resulting category by $\CoisoAlgPair$
or $\CoisoAlgPair_{\mathbb{k}}$ whenever we need to emphasize the
underlying ring of scalars.

Forgetting the total algebra yields then a functor
\begin{equation}
    \label{eq:CoisoTripleToPairs}
    \CoisoAlgTriple \longrightarrow \CoisoAlgPair.
\end{equation}
This also results in a functor
\begin{equation}
    \label{eq:LeftIdealsCoisoPairs}
    \functor{Dirac}\colon \LeftIdeals \longrightarrow \CoisoAlgPair,
\end{equation}
sending an algebra $\algebra{A}_\Total$ with left ideal
$\algebra{A}_\Null \subseteq \algebra{A}_\Total$ to the coisotropic
pair $(\normalizer(\algebra{A}_\Null), \algebra{A}_\Null)$. Note that
here the correct definition of morphisms in $\LeftIdeals$ is crucial
to make this functorial. Conversely, we can extend a coisotropic pair
$(\algebra{A}_\Wobs, \algebra{A}_\Null)$ always to a coisotropic
triple in a trivial way by mapping it to
$(\algebra{A}_\Wobs, \algebra{A}_\Wobs, \algebra{A}_\Null)$.
Forgetting the triple gives back the pair we started with.  In fact
this is a left adjoint to the forgetful functor forgetting
$\algebra{A}_\Total$.  Moreover, this allows us to view
$\CoisoAlgPair$ as a subcategory of $\CoisoAlgTriple$.  We will often
give definitions only for coisotropic triples, and the appropriate
definitions for coisotropic pairs will then be given by restricting to
this subcategory.  However, the interesting triples are those where
$\algebra{A}_\Null \subseteq \algebra{A}_\Total$ is \emph{only} a left
ideal. Hence these will not show up as images of this inclusion
functor $\CoisoAlgPair \longrightarrow \CoisoAlgTriple$.

Note also that we have a \emph{trivial} coisotropic pair for every
unital algebra $\algebra{A}$ by setting
\begin{equation}
    \label{eq:TrivialPair}
    \algebra{A}_\Trivial = (\algebra{A}, \algebra{A})
\end{equation}
as well as the \emph{un-reduce pair}
\begin{equation}
    \label{eq:UnreducePair}
    \algebra{A}_{\scriptstyle{\mathrm{unred}}} = (\algebra{A}, \{0\}).
\end{equation}
Both notions are of course compatible with the trivial and the
un-reduce triples and the functor \eqref{eq:CoisoTripleToPairs}.

As mentioned before, the reduction functor only uses the information
of a pair and thus gives a reduction
$\reduce\colon \CoisoAlgPair \longrightarrow \Algebras$. Ultimately,
we arrive at the following diagram
\begin{equation}
    \label{eq:ReductionFunctorsCommute}
    \begin{tikzpicture}[baseline=(current bounding box.center)]
        \matrix(m) [matrix of math nodes,
        row sep=4em,
        column sep=6em]{
          & \CoisoAlgTriple & \\
          \LeftIdeals & & \Algebras \\
          & \CoisoAlgPair \\
        };
        \draw[->]
        (m-2-1) to node[above, sloped] {$\functor{Dirac}$} (m-1-2);
        \draw[->]
        (m-2-1) to node[below, sloped] {$\functor{Dirac}$} (m-3-2);
        \draw[->, bend left = 30pt]
        (m-2-3) to node[above, sloped] {$\functor{unred}$} (m-1-2);
        \draw[->, bend right = 30pt]
        (m-2-3) to node[above,sloped] {$\functor{trivial}$} (m-1-2);
        \draw[->]
        (m-1-2) to node[above,sloped] {$\reduce$} (m-2-3);
        \draw[->, bend left = 30pt]
        (m-2-3) to node[below, sloped] {$\functor{unred}$} (m-3-2);
        \draw[->, bend right = 30pt]
        (m-2-3) to node[below,sloped] {$\functor{trivial}$} (m-3-2);
        \draw[->]
        (m-3-2) to node[below,sloped] {$\reduce$} (m-2-3);
        \draw[->, bend left = 20pt]
        (m-1-2) to node[above, sloped] {$\functor{forget}$} (m-3-2);
        \draw[->, bend left = 20pt]
        (m-3-2) to node[above, sloped] {$\functor{trivial}$} (m-1-2);
    \end{tikzpicture}
\end{equation}
of functors. Thus, in particular, the reduction of the un-reduce pair
reproduces the algebra one started with and the reduction of the
trivial triple is the zero algebra.

\begin{remark}
    \label{remark:FromLeftIdealToTriplesandPairs}%
    While the pair point of view simplifies the reduction picture
    drastically, the original motivation is to generalize the
    geometric situation of phase space reduction: there the algebra in
    the middle $\algebra{A}_\Wobs$ is typically the most difficult one
    to get, both in the classical and the quantum situation. Instead,
    one starts with the ambient algebra $\algebra{A}_\Total$. Then in
    the quantum version it is already difficult enough (and sometimes
    obstructed) to deform the classical vanishing ideal into a left
    ideal $\algebra{A}_\Null$. Only in the last step one can then find
    $\algebra{A}_\Wobs$. Thus we will discuss triples and pairs in
    parallel to keep in mind that a serious application will always
    require to actually find the triples out of more simple
    data. Ultimately, we will be interested in starting with
    $(\algebra{A}_\Total, \algebra{A}_\Null)$ in $\LeftIdeals$ and
    construct the relevant data out of this. While for the algebras
    there is a functorial way by using the normalizers, in the case of
    (bi-)modules we will see that one typically has no obvious
    functorial way to accomplish this.
\end{remark}

Before moving to the categories of (bi-)modules over coisotropic
triples and pairs, we mention the following canonical bimodule
relating the total algebra and the reduced one:
\begin{proposition}
    \label{proposition:CanonicalBimodule}%
    Let
    $\algebra{A} = (\algebra{A}_\Total, \algebra{A}_\Wobs,
    \algebra{A}_\Null)$
    be a coisotropic triple of algebras over $\mathbb{k}$.
    \begin{propositionlist}
    \item \label{item:EndCAisAredOpp} Then
        \begin{equation}
            \label{eq:CanonicalBimoduleAtotalAred}
            \functor{C}(\algebra{A})
            =
            \faktor{\algebra{A}_\Total}{\algebra{A}_\Null}
        \end{equation}
        is a $(\algebra{A}_\Total, \algebra{A}_\red)$-bimodule, cyclic
        with respect to $\algebra{A}_\Total$, and one has
        \begin{equation}
            \label{eq:EndosOfCanonicalBimodule}
            \End_{\algebra{A}_\Total}(\functor{C}(\algebra{A}))^\opp
            =
            \faktor{\normalizer(\algebra{A}_\Null)}{\algebra{A}_\Null}.
        \end{equation}
    \item For a morphism $\Phi\colon \algebra{A} \longrightarrow
        \algebra{B}$ of coisotropic triples of algebras the map
        \begin{equation}
            \label{eq:CPhi}
            \functor{C}(\Phi)\colon
            \functor{C}(\algebra{A}) \ni [a]
            \; \longmapsto \;
            [\Phi(a)] \in \functor{C}(\algebra{B})
        \end{equation}
        is a bimodule morphism along the two algebra morphisms
        $\Phi_\Total\colon \algebra{A}_\Total \longrightarrow
        \algebra{B}_\Total$
        and
        $\Phi_\red\colon \algebra{A}_\red \longrightarrow
        \algebra{B}_\red$.
    \item \label{item:CAisFunctorial} Mapping $\algebra{A}$ to
        $\functor{C}(\algebra{A})$ and morphisms
        $\Phi\colon \algebra{A} \longrightarrow \algebra{B}$ to
        $\functor{C}(\Phi)$ gives a functor
        \begin{equation}
            \label{eq:Cfunctor}
            \functor{C}\colon
            \CoisoAlgTriple
            \longrightarrow
            \Bimodule
        \end{equation}
        into the category $\Bimodule$ of bimodules with morphisms
        being bimodule morphisms along algebra morphisms of the
        involved algebras.
    \end{propositionlist}
\end{proposition}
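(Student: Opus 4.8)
The plan is to treat the three parts in order, with essentially all the substance concentrated in the first. For part~\ref{item:EndCAisAredOpp} I would begin by installing the two module structures on $\functor{C}(\algebra{A}) = \algebra{A}_\Total/\algebra{A}_\Null$. The left $\algebra{A}_\Total$-action is left multiplication, which is well-defined because $\algebra{A}_\Null$ is a left ideal in $\algebra{A}_\Total$. The right $\algebra{A}_\red$-action is $[a]\cdot[b] := [ab]$ for $a \in \algebra{A}_\Total$ and $b \in \algebra{A}_\Wobs$; independence of the representative $b$ uses $a\algebra{A}_\Null \subseteq \algebra{A}_\Null$ (left ideal again), and independence of the representative $a$ uses $\algebra{A}_\Null\,\algebra{A}_\Wobs \subseteq \algebra{A}_\Null$ (that $\algebra{A}_\Null$ is a two-sided, in particular right, ideal in $\algebra{A}_\Wobs$). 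Commutativity of the two actions is immediate from associativity of the product in $\algebra{A}_\Total$, and cyclicity over $\algebra{A}_\Total$ is witnessed by $[1]$ since $a\cdot[1] = [a]$.

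The endomorphism identity \eqref{eq:EndosOfCanonicalBimodule} is the place where some care is needed. Since $\functor{C}(\algebra{A})$ is cyclic over $\algebra{A}_\Total$ generated by $[1]$, any $\phi \in \End_{\algebra{A}_\Total}(\functor{C}(\algebra{A}))$ is determined by $\phi([1]) = [c]$ via $\phi([a]) = [ac]$; conversely $[a] \mapsto [ac]$ is a well-defined left-linear map if and only if $\algebra{A}_\Null\, c \subseteq \algebra{A}_\Null$, that is $c \in \normalizer(\algebra{A}_\Null)$, and two representatives give the same map exactly when they differ by an element of $\algebra{A}_\Null$. This yields a $\mathbb{k}$-linear bijection $\normalizer(\algebra{A}_\Null)/\algebra{A}_\Null \to \End_{\algebra{A}_\Total}(\functor{C}(\algebra{A}))$, $[c] \mapsto \phi_c$; the computation $\phi_c \circ \phi_{c'} = \phi_{c'c}$ then shows that it reverses the order of multiplication, hence is an isomorphism onto $\End_{\algebra{A}_\Total}(\functor{C}(\algebra{A}))^\opp$. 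Tracking this order reversal, and pinning down that it is precisely the normalizer elements that induce well-defined maps, is the only genuinely non-routine point; everything else is bookkeeping.

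For the second statement I would check that $\functor{C}(\Phi)\colon [a] \mapsto [\Phi(a)]$ is well-defined because $\Phi(\algebra{A}_\Null) \subseteq \algebra{B}_\Null$, and that it intertwines the left actions along $\Phi_\Total$ and the right actions along $\Phi_\red$; both follow at once from $\Phi$ being an algebra morphism together with the defining formula $\Phi_\red([b]) = [\Phi(b)]$ of \eqref{eq:PhiReduced}. For part~\ref{item:CAisFunctorial}, functoriality reduces to $\functor{C}(\id) = \id$ and $\functor{C}(\Psi\circ\Phi) = \functor{C}(\Psi)\circ\functor{C}(\Phi)$, using that the base algebra morphisms $\Phi_\Total$ and $\Phi_\red$ compose functorially by Proposition~\ref{proposition:CoisotropicTripleAlgebraReduction}, so that $\functor{C}(\Phi)$ is indeed a morphism in $\Bimodule$ and composition there matches composition of morphisms of triples.
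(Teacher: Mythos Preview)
Your proposal is correct and follows essentially the same approach as the paper's proof: left module structure from the left ideal property, cyclicity via $[\Unit]$, and identification of the endomorphisms with $\normalizer(\algebra{A}_\Null)/\algebra{A}_\Null$ via right multiplication, followed by the straightforward checks for parts two and three. The only cosmetic difference is that the paper derives the right $\algebra{A}_\red$-action as a consequence of the endomorphism computation (using $\algebra{A}_\Wobs \subseteq \normalizer(\algebra{A}_\Null)$), whereas you install the right action directly first and then compute the endomorphisms; your version is in fact more detailed and self-contained.
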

\begin{proof}
    Indeed, since $\algebra{A}_\Null$ is a left ideal in
    $\algebra{A}_\Total$, the quotient $\functor{C}(\algebra{A})$ is a
    left $\algebra{A}$-module. Since $\algebra{A}_\Total$ is unital,
    $\functor{C}(\algebra{A})$ is cyclic with cyclic element
    $[\Unit] \in \module{C}(\algebra{A})$. As already indicated in the
    case of star products in \eqref{eq:EndoAmodICtoBCmodIC}, the
    opposite of the module endomorphisms of this left
    $\algebra{A}$-module is given by
    $\normalizer(\algebra{A}_\Null) \big/ \algebra{A}_\Null$ using the
    right multiplications. Since
    $\algebra{A}_\Wobs \subseteq \normalizer(\algebra{A}_\Null)$ by
    assumption, this gives the canonical right module structure,
    showing the first claim. For the second, we note that
    $[\Phi(a)] \in \functor{C}(\algebra{B})$ only depends on the class
    $[a]$ since $\Phi(\algebra{A}_\Null) \subseteq \algebra{B}_\Null$.
    Then it is clear that
    $\functor{C}(\Phi) (a \cdot [x]) = \Phi_\Total(a) \cdot
    \functor{C}(\Phi)([x])$
    and
    $\functor{C}(\Phi) ([x] \cdot [a']) = \functor{C}(\Phi)([x]) \cdot
    \Phi_\red([a'])$
    for $a, x \in \algebra{A}_\Total$ and $a' \in \algebra{A}_\Wobs$
    since we can check these relations on representatives. From this
    the second part follows. But then the claimed functoriality is
    clear.
\end{proof}

Geometrically, $\functor{C}(\algebra{A})$ corresponds to the functions
on the constraint surface. Even though in the classical (commutative)
case this is an algebra itself, we will consider it only as a
$(\algebra{A}_\Total, \algebra{A}_\red)$-bimodule, since this is the
only structure remaining in the noncommutative situation. Note that
here we need the coisotropic triples instead of mere coisotropic pairs
of algebras in order to define the bimodule
$\functor{C}(\algebra{A})$.
\begin{remark}
    \label{remark:ReductionMartin}%
    We should remark that this observation stands at the beginning of
    the reduction idea of Bordemann in \cite{bordemann:2005a,
      bordemann:2004a:pre} where the geometric situation is analyzed
    in detail, including a description of obstructions for the
    deformation quantization of coisotropic submanifolds in the
    symplectic framework.
\end{remark}

%
%

\section{Triples and Pairs of Bimodules}
\label{sec:TriplePairBimodules}

Let us come back to the geometric picture of
Section~\ref{sec:RedcutionPoissonGeometryDQ}, where
$\iota \colon C \longrightarrow M$ is a coisotropic submanifold of a
Poisson manifold $(M,\pi)$ and we assume to have a surjective
submersion $\pr \colon C \longrightarrow M_\red$.  Here $M_\red$ is
again the leaf space $C\big/\!\sim$ of the characteristic distribution
of $C$.  Now let in addition $p \colon E \longrightarrow M$ be a
vector bundle over $M$.  Then we know that
$\module{E}_\Total = \Secinfty(E)$ is a $\Cinfty(M)$-module and we can
define a submodule
\begin{equation}
    \label{eq:GeometricENull}
    \module{E}_\Null
    =
    \left\{
        s \in \Secinfty(E)
        \; \big\vert \;
        s \at{C} = 0
    \right\}
\end{equation}
of all sections vanishing on $C$.  In order to define a reduced vector
bundle $p_\red \colon E_\red \longrightarrow M_\red$ we would like to
use the sections of $E$ that are constant along the characteristic
distribution of $C$.  Of course, there is no canonical way to make
sense out of such a statement. Instead, we need to use some additional
data.  For this, let $\nabla$ be a covariant derivative for the vector
bundle $E$ and consider those sections of $E$ whose covariant
derivative in the direction of Hamiltonian vector fields of functions
in the vanishing ideal $\algebra{I}_C$ vanish on $C$. We denote this
subset by
\begin{equation}
    \label{eq:GeometricEWobs}
    \module{E}_\Wobs
    =
    \left\{
        s \in \Secinfty(E)
        \; \big\vert \;
        (\nabla_{X_f}s)\at{C} = 0
        \textrm{ for all }
        f \in \algebra{I}_C
    \right\}.
\end{equation}
Note that for the definition of $\module{E}_\Wobs$ we used the
additional information of a covariant derivative on $E$ while
$\module{E}_\Null$ was still canonical.  This is different from
coisotropic algebras, where we could define $\algebra{A}_\Wobs$ as the
Poisson normalizer $\algebra{B}_C$.  We use this geometric situation
as motivation for the definition of bimodules over coisotropic
triples.
\begin{definition}[Bimodules over coisotropic triples]
    \label{definition:CoisoTriplesBimodules}%
    Let $\algebra{A}$ and $\algebra{B}$ be coisotropic triples of
    algebras over $\mathbb{k}$.
    \begin{definitionlist}
    \item \label{item:CoisoTripleBimodule} A triple
        $\module{E} = (\module{E}_\Total, \module{E}_\Wobs,
        \module{E}_\Null)$
        consisting of a
        $(\algebra{B}_\Total, \algebra{A}_\Total)$-bimodule
        $\module{E}_\Total$ and
        $(\algebra{B}_\Wobs, \algebra{A}_\Wobs)$-bimodules
        $\module{E}_\Wobs$ and $\module{E}_\Null$ together with a
        bimodule morphism
        $\iota_\module{E} \colon \module{E}_\Wobs \longrightarrow
        \module{E}_\Total$
        along the inclusions
        $\algebra{B}_\Wobs \subseteq \algebra{B}_\Total$ and
        $\algebra{A}_\Wobs \subseteq \algebra{A}_\Total$ is called a
        $(\algebra{B},\algebra{A})$-bimodule if
        $\module{E}_\Null \subseteq \module{E}_\Wobs$ is a
        sub-bimodule such that
        \begin{equation}
            \label{eq:BimoduleTripleCondition}
            \algebra{B}_\Null \cdot \module{E}_\Wobs
            \subseteq
            \module{E}_\Null
            \quad
            \textrm{and}
            \quad
            \module{E}_\Wobs \cdot \algebra{A}_\Null
            \subseteq
            \module{E}_\Null.
        \end{equation}
    \item \label{item:CoisoTripleBimoduleMorphism} A morphism
        $\Phi\colon \module{E} \longrightarrow \tilde{\module{E}}$
        between $(\algebra{B}, \algebra{A})$-bimodules is a pair
        $(\Phi_\Total, \Phi_\Wobs)$ of a
        $(\algebra{B}_\Total, \algebra{A}_\Total)$-bimodule morphism
        $\Phi_\Total \colon \module{E}_\Total \longrightarrow
        \tilde{\module{E}}_\Total$
        and $(\algebra{B}_\Wobs,\algebra{A}_\Wobs)$-bimodule morphism
        $\Phi\colon \module{E}_\Wobs \longrightarrow
        \tilde{\module{E}}_\Wobs$
        such that
        $\Phi_\Total \circ \iota_\module{E} =
        \iota_{\tilde{\module{E}}} \circ \Phi_\Wobs$
        and
        $\Phi_\Wobs(\module{E}_\Null) \subseteq
        \tilde{\module{E}}_\Null$.
    \item \label{item:CategoryCoisoTripleBimods} The category of
        $(\algebra{B}, \algebra{A})$-bimodules is denoted by
        $\CoisoBimodTriple(\algebra{B}, \algebra{A})$.
    \end{definitionlist}
\end{definition}

The motivation is to mimic the two-sided ideal property of the
$\NULL$-component also on the level of (bi-)modules. It is of course
a routine check that the obvious composition of morphisms is again a
morphism and we thus get a category.  The reason we did not choose to
require an inclusion $\module{E}_\Wobs \subseteq \module{E}_\Total$ is
that with this broader notion tensor products turn out to become
easier as we will not have to insist on flatness in order to guarantee
the injectivity of the tensor product of the inclusions.
Nevertheless, viewing a coisotropic triple $\algebra{A}$ as bimodule
over itself
$\iota_\algebra{A} \colon \algebra{A}_\Wobs \longrightarrow
\algebra{A}_\Total$
is still the inclusion map.  Similar to the case of coisotropic
triples of algebras we can also define modules over coisotropic pairs
by simply ignoring the $\TOTAL$-component.
\begin{definition}[Bimodules over coisotropic pairs]
    \label{definition:CoisoPairsBimodules}%
    Let $\algebra{A}$ and $\algebra{B}$ be coisotropic pairs of
    algebras over $\mathbb{k}$.
    \begin{definitionlist}
    \item \label{item:CoisoPairBimodule} A pair
        $\module{E} = (\module{E}_\Wobs, \module{E}_\Null)$ of
        $(\algebra{B}_\Wobs, \algebra{A}_\Wobs)$-bimodules is called a
        $(\algebra{B},\algebra{A})$-bimodule if
        $\module{E}_\Null \subseteq \module{E}_\Wobs$ is a
        sub-bimodule such that
        \begin{equation}
            \label{eq:BimodulePairCondition}
            \algebra{B}_\Null \cdot \module{E}_\Wobs
            \subseteq
            \module{E}_\Null
            \quad
            \textrm{and}
            \quad
            \module{E}_\Wobs \cdot \algebra{A}_\Null
            \subseteq
            \module{E}_\Null.
        \end{equation}
    \item \label{item:CoisoPairBimoduleMorphism} A morphism
        $\Phi\colon \module{E} \longrightarrow \tilde{\module{E}}$
        between $(\algebra{B}, \algebra{A})$-bimodules is a
        $(\algebra{B}_\Wobs, \algebra{A}_\Wobs)$-bimodule morphism
        $\Phi\colon \module{E}_\Wobs \longrightarrow
        \tilde{\module{E}}_\Wobs$
        such that
        $\Phi(\module{E}_\Null) \subseteq \tilde{\module{E}}_\Null$.
    \item \label{item:CategoryCoisoPairBimods} The category of
        $(\algebra{B}, \algebra{A})$-bimodules is denoted by
        $\CoisoBimodPair(\algebra{B}, \algebra{A})$.
    \end{definitionlist}
\end{definition}

By forgetting the total bimodule we get a forgetful functor
\begin{equation}
	\CoisoBimodTriple \longrightarrow \CoisoBimodPair.
\end{equation}
Conversely, we can go the other way by mapping a
$(\algebra{B},\algebra{A})$-bimodule $\module{E}$ over coisotropic
pairs $\algebra{A}$ and $\algebra{B}$ to the bimodule
$(\module{E}_\Wobs, \module{E}_\Wobs, \module{E}_\Null)$ over the
coisotropic triples
$(\algebra{A}_\Wobs, \algebra{A}_\Wobs, \algebra{A}_\Null)$ and
$(\algebra{B}_\Wobs, \algebra{B}_\Wobs, \algebra{B}_\Null)$.  Similar
to the case of coisotropic algebras this is a left adjoint to the
forgetful functor forgetting the $\TOTAL$-component.

The bicategory $\Bimodules$ of bimodules over algebras with the tensor
product as composition functors is one of the most basic examples of a
bicategory.  The goal of this section is to prove that we can
construct bicategories $\CoisoBimodTriple$ and $\CoisoBimodPair$
building on the above categories as well. Thus we can realize
$\CoisoBimodPair$ as a sub-bicategory of $\CoisoBimodTriple$. To show
this we need to define a tensor product of bimodules over coisotropic
triples and pairs and to check that there exist natural
transformations of associativity as well as left and right identities.
This is done in the following lemmas.  As a first step we construct a
tensor product functor
\begin{equation}
    \label{eq:TensorProductOfTriples}
    \tensor[\algebra{B}]\colon
    \CoisoBimodTriple{(\algebra{C}, \algebra{B})}
    \times
    \CoisoBimodTriple{(\algebra{B}, \algebra{A})}
    \longrightarrow
    \CoisoBimodTriple{(\algebra{C}, \algebra{A})}
\end{equation}
by tensoring the components of the triple as follows:
\begin{lemma}
    \label{lemma:TensorProductBimodules}%
    Let $\algebra{A}$, $\algebra{B}$ and $\algebra{C}$ be coisotropic
    triples of algebras and let
    $\module{F} \in \CoisoBimodTriple{(\algebra{C}, \algebra{B})}$,
    $\module{E} \in \CoisoBimodTriple{(\algebra{B},\algebra{A})}$ be
    corresponding bimodules. Then
    $\algbimodule{C}{F}{B} \tensor[\algebra{B}] \algbimodule{B}{E}{A}$
    given by the components
    \begin{align}
        \label{eq:TotalComponentTensorProduct}
        \left(
            \algbimodule{C}{F}{B}
            \tensor[\algebra{B}]
            \algbimodule{B}{E}{A}
        \right)_{\Total}
        &=
        \module{F}_{\Total}
        \tensor[\algebra{B}_{\Total}]
        \module{E}_{\Total}, \\
        \label{eq:WobsComponentTensorProduct}
        \left(
            \algbimodule{C}{F}{B}
            \tensor[\algebra{B}]
            \algbimodule{B}{E}{A}
        \right)_{\Wobs}
        &=
        \module{F}_{\Wobs}
	\tensor[\algebra{B}_{\Wobs}]
        \module{E}_{\Wobs}, \\
        \label{eq:NullComponentTensorProduct}
        \left(
            \algbimodule{C}{F}{B}
            \tensor[\algebra{B}]
            \algbimodule{B}{E}{A}
        \right)_{\Null}
        &=
        \module{F}_{\Wobs}
        \tensor[\algebra{B}_\Wobs]
        \module{E}_{\Null}
        +
        \module{F}_{\Null}
        \tensor[\algebra{B}_\Wobs]
        \module{E}_\Wobs
    \end{align}
    is a $(\algebra{C}, \algebra{A})$-bimodule, where we use the
    tensor product
    $\iota_{\module{F} \tensor \module{E}} = \iota_{\module{F}}
    \tensor \iota_{\module{E}}$
    to map the $\WOBS$-component into the $\TOTAL$-component.
\end{lemma}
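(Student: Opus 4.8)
The plan is to go through the five requirements in Definition~\ref{definition:CoisoTriplesBimodules}, \ref{item:CoisoTripleBimodule} for the triple defined by \eqref{eq:TotalComponentTensorProduct}--\eqref{eq:NullComponentTensorProduct}, together with the map $\iota_{\module{F}\tensor\module{E}} = \iota_\module{F}\tensor\iota_\module{E}$. The $\TOTAL$- and $\WOBS$-components are ordinary tensor products of bimodules over $\algebra{B}_\Total$ resp.\ $\algebra{B}_\Wobs$, hence by the classical theory they are a $(\algebra{C}_\Total,\algebra{A}_\Total)$- resp.\ $(\algebra{C}_\Wobs,\algebra{A}_\Wobs)$-bimodule; nothing new happens here. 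The one convention to fix at the outset is that $\module{F}_\Wobs\tensor[\algebra{B}_\Wobs]\module{E}_\Null$ and $\module{F}_\Null\tensor[\algebra{B}_\Wobs]\module{E}_\Wobs$ in \eqref{eq:NullComponentTensorProduct} denote the \emph{images} of the canonical maps induced by the inclusions $\module{E}_\Null\hookrightarrow\module{E}_\Wobs$ and $\module{F}_\Null\hookrightarrow\module{F}_\Wobs$ inside $\module{F}_\Wobs\tensor[\algebra{B}_\Wobs]\module{E}_\Wobs$; these maps need not be injective, but only the image enters the construction, so this causes no trouble, and the indicated sum is then a well-defined submodule.

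Next I would check that this $\NULL$-component is a sub-$(\algebra{C}_\Wobs,\algebra{A}_\Wobs)$-bimodule of the $\WOBS$-component, and it is enough to do this for each summand separately. The set spanned by the elements $f\tensor e$ with $e\in\module{E}_\Null$ is stable under left multiplication by $\algebra{C}_\Wobs$, since this touches only the $\module{F}_\Wobs$-factor, and under right multiplication by $\algebra{A}_\Wobs$, since $\module{E}_\Null\subseteq\module{E}_\Wobs$ is a sub-$(\algebra{B}_\Wobs,\algebra{A}_\Wobs)$-bimodule and hence a right $\algebra{A}_\Wobs$-submodule; the argument for $\module{F}_\Null\tensor[\algebra{B}_\Wobs]\module{E}_\Wobs$ is symmetric, and a sum of two sub-bimodules is again one.

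For the connecting map I would first argue well-definedness of $\iota_\module{F}\tensor\iota_\module{E}$ as a map $\module{F}_\Wobs\tensor[\algebra{B}_\Wobs]\module{E}_\Wobs\longrightarrow\module{F}_\Total\tensor[\algebra{B}_\Total]\module{E}_\Total$: for $b\in\algebra{B}_\Wobs$ one has $\iota_\module{F}(f\cdot b)\tensor\iota_\module{E}(e)=(\iota_\module{F}(f)\cdot b)\tensor\iota_\module{E}(e)=\iota_\module{F}(f)\tensor(b\cdot\iota_\module{E}(e))=\iota_\module{F}(f)\tensor\iota_\module{E}(b\cdot e)$, using that $\iota_\module{F}$ and $\iota_\module{E}$ intertwine the $\algebra{B}_\Wobs$-actions, which are the restrictions of the $\algebra{B}_\Total$-actions along which the right-hand tensor product is balanced. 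Being a tensor product of two bimodule morphisms along the respective inclusions, the resulting map is then a bimodule morphism along $\algebra{C}_\Wobs\subseteq\algebra{C}_\Total$ and $\algebra{A}_\Wobs\subseteq\algebra{A}_\Total$, which is what is needed.

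Finally I would verify the two conditions \eqref{eq:BimoduleTripleCondition} for $\module{F}\tensor\module{E}$. For $c\in\algebra{C}_\Null$ and an elementary tensor $f\tensor e$ in the $\WOBS$-component one computes $c\cdot(f\tensor e)=(c\cdot f)\tensor e$ with $c\cdot f\in\module{F}_\Null$ by the ideal condition $\algebra{C}_\Null\cdot\module{F}_\Wobs\subseteq\module{F}_\Null$ for $\module{F}$, so the result lies in the summand $\module{F}_\Null\tensor[\algebra{B}_\Wobs]\module{E}_\Wobs$ of the $\NULL$-component; symmetrically $(f\tensor e)\cdot a=f\tensor(e\cdot a)$ with $e\cdot a\in\module{E}_\Null$ for $a\in\algebra{A}_\Null$ lands in $\module{F}_\Wobs\tensor[\algebra{B}_\Wobs]\module{E}_\Null$, and linear extension gives both inclusions. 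Extending the construction to morphisms by the same component-wise recipe is then immediate. I expect the only genuinely delicate point to be the bookkeeping distinguishing the $\algebra{B}_\Wobs$- and $\algebra{B}_\Total$-balancings in the well-definedness of $\iota_\module{F}\tensor\iota_\module{E}$, together with the remark that the summands of the $\NULL$-component are images rather than abstract tensor products; the remaining verifications are routine checks on generators.
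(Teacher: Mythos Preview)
Your proposal is correct and follows essentially the same route as the paper's proof: both fix the convention that the $\NULL$-summands are images inside the $\WOBS$-tensor product, then verify the bimodule structures on each component, the bimodule-morphism property of $\iota_\module{F}\tensor\iota_\module{E}$, and the two inclusions \eqref{eq:BimoduleTripleCondition} on generators. Your write-up is in fact more careful than the paper's in two places---the explicit $\algebra{B}_\Wobs$-balancing check for well-definedness of $\iota_\module{F}\tensor\iota_\module{E}$, and the verification that each summand of the $\NULL$-component is a sub-bimodule---while the paper leaves these as ``clear''.
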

\begin{proof}
    Note that the tensor product
    $\module{F}_{\Wobs}\tensor[\algebra{B}_{\Wobs}]
    \module{E}_{\Null}$
    is \emph{not} a submodule of
    $\module{F}_{\Wobs} \tensor[\algebra{B}_{\Wobs}]
    \module{E}_{\Wobs}$
    directly, thus \eqref{eq:NullComponentTensorProduct} has to be
    understood in one of the two following equivalent ways: either
    view
    $\module{F}_{\Wobs} \tensor[\algebra{B}_{\Wobs}]
    \module{E}_{\Null}$
    as the submodule of
    $\module{F}_{\Wobs}\tensor[\algebra{B}_{\Wobs}]
    \module{E}_{\Wobs}$
    generated by all elements of the form $y\tensor x$, with
    $y \in \module{F}_{\Wobs}$, $x \in \module{E}_{\Null}$, or as the
    image of
    $\id_{\module{F}_{\Wobs}} \tensor \iota_{\module{E}_{\Null}}$
    where $\iota_{\module{E}_{\Null}}$ is the inclusion of
    $\module{E}_{\Null}$ into $\module{E}_{\Wobs}$. Similarly for
    $\module{F}_{\Null} \tensor[\algebra{B}_{\Wobs}]
    \module{E}_{\Wobs}$.
    Now observe that
    $(\module{F} \tensor[\algebra{B}] \module{E})_\Total$ is a
    $(\algebra{C}_\Total, \algebra{A}_\Total)$-bimodule and
    $(\module{F} \tensor[\algebra{B}] \module{E})_{\Wobs}$ and
    $(\module{F} \tensor[\algebra{B}] \module{E})_{\Null}$ are clearly
    $(\algebra{C}_{\Wobs}, \algebra{A}_{\Wobs})$-bimodules.  Moreover,
    the map
    $\iota_\module{F} \tensor \iota_\module{E} \colon (\module{F}
    \tensor[\algebra{B}] \module{E})_\Wobs \longrightarrow (\module{F}
    \tensor[\algebra{B}] \module{E})_\Total$
    is a bimodule morphism and
    \begin{equation*}
        \algebra{B}_{\Null}
        \cdot
        (\module{F} \tensor[\algebra{B}] \module{E})_{\Wobs}
        =
        (\algebra{B}_{\Null} \cdot \module{F}_{\Wobs})
        \tensor[\algebra{B}_{\Wobs}]
        \module{E}_{\Wobs}
        \subseteq
        \module{F}_{\Null}
        \tensor[\algebra{B}]_{\Wobs}
        \module{E}_{\Wobs}
        \subseteq
        (\module{F} \tensor[\algebra{B}] \module{E})_{\Null}
    \end{equation*}
    and
    \begin{equation*}
        (\module{F} \tensor[\algebra{B}] \module{E})_{\Wobs}
        \cdot
        \algebra{A}_{\Null}
        =
        \module{F}_{\Wobs}
        \tensor[\algebra{B}_{\Wobs}]
        (\module{E}_{\Wobs} \cdot \algebra{A}_{\Null})
        \subseteq
        \module{F}_{\Wobs}
        \tensor[\algebra{B}_{\Wobs}]
        \module{E}_{\Null}
        \subseteq
        (\module{F} \tensor[\algebra{B}] \module{E})_{\Null}
    \end{equation*}
    hold. Hence $\module{F} \tensor[\algebra{B}] \module{E}$ is a
    $(\algebra{B}, \algebra{A})$-bimodule.
\end{proof}
\begin{lemma}
    \label{lemma:TensorProductBimoduleMorphisms}%
    Let $\algebra{A}$, $\algebra{B}$ and $\algebra{C}$ be coisotropic
    triples of algebras. Moreover, let
    $\Psi\colon \module{F} \longrightarrow \module{F}^\prime$ and
    $\Phi \colon \module{E} \longrightarrow \module{E}^\prime$ be
    morphisms of $(\algebra{C}, \algebra{B})$-bimodules $\module{F}$,
    $\module{F}^\prime$, and $(\algebra{B}, \algebra{A})$-bimodules
    $\module{E}$, $\module{E}^\prime$, respectively.  Then
    $\Psi \tensor \Phi$ given by
    \begin{gather}
        \label{eq:MorphismTensorTotal}
        (\Psi \tensor \Phi)_\Total
        =
        \Psi_\Total \tensor \Phi_\Total \\
        \shortintertext{and}
        \label{eq:MorphismTensorWobs}
        (\Psi \tensor \Phi)_\Wobs
        =
        \Psi_\Wobs \tensor \Phi_\Wobs
    \end{gather}
    is a bimodule morphism from
    $\module{F} \tensor[\algebra{B}] \module{E}$ to
    $\module{F}^\prime \tensor[\algebra{B}] \module{E}^\prime$.
\end{lemma}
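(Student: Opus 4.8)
The plan is to verify that the pair $(\Psi \tensor \Phi)_\Total = \Psi_\Total \tensor \Phi_\Total$ and $(\Psi \tensor \Phi)_\Wobs = \Psi_\Wobs \tensor \Phi_\Wobs$ satisfies the two conditions required of a morphism in $\CoisoBimodTriple$ according to Definition~\ref{definition:CoisoTriplesBimodules}~\ref{item:CoisoTripleBimoduleMorphism}: namely, that each component is a bimodule morphism of the appropriate type, that the two components intertwine the inclusion maps $\iota$, and that the $\Wobs$-component sends the $\Null$-component into the target $\Null$-component.

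First I would observe that $\Psi_\Total \tensor \Phi_\Total$ is a morphism of $(\algebra{C}_\Total, \algebra{A}_\Total)$-bimodules: this is the standard fact that tensoring bimodule morphisms over the middle algebra (here $\algebra{B}_\Total$) yields a bimodule morphism, and it makes sense since $\Psi_\Total$ and $\Phi_\Total$ are both $\algebra{B}_\Total$-linear on the relevant side. Likewise $\Psi_\Wobs \tensor \Phi_\Wobs$ is a morphism of $(\algebra{C}_\Wobs, \algebra{A}_\Wobs)$-bimodules over $\algebra{B}_\Wobs$. For the intertwining condition, I would use that $\iota_{\module{F} \tensor \module{E}} = \iota_\module{F} \tensor \iota_\module{E}$ together with the defining relations $\Psi_\Total \circ \iota_\module{F} = \iota_{\module{F}'} \circ \Psi_\Wobs$ and $\Phi_\Total \circ \iota_\module{E} = \iota_{\module{E}'} \circ \Phi_\Wobs$; tensoring these two identities and using functoriality of $\tensor$ on morphisms immediately gives $(\Psi_\Total \tensor \Phi_\Total) \circ (\iota_\module{F} \tensor \iota_\module{E}) = (\iota_{\module{F}'} \tensor \iota_{\module{E}'}) \circ (\Psi_\Wobs \tensor \Phi_\Wobs)$, which is exactly $\Phi_\Total \circ \iota_{\module{F} \tensor \module{E}} = \iota_{\module{F}' \tensor \module{E}'} \circ \Phi_\Wobs$ for the tensor morphism.

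It remains to check that $(\Psi \tensor \Phi)_\Wobs$ maps the $\Null$-component into the target $\Null$-component. Recall from \eqref{eq:NullComponentTensorProduct} that $(\module{F} \tensor[\algebra{B}] \module{E})_\Null$ is the submodule of $\module{F}_\Wobs \tensor[\algebra{B}_\Wobs] \module{E}_\Wobs$ generated by elements $y \tensor x$ with $y \in \module{F}_\Null$ or $x \in \module{E}_\Null$. Since $\Psi_\Wobs(\module{F}_\Null) \subseteq \module{F}'_\Null$ and $\Phi_\Wobs(\module{E}_\Null) \subseteq \module{E}'_\Null$ by hypothesis, the map $\Psi_\Wobs \tensor \Phi_\Wobs$ sends each such generator $y \tensor x$ to $\Psi_\Wobs(y) \tensor \Phi_\Wobs(x)$, which lies in $(\module{F}' \tensor[\algebra{B}] \module{E}')_\Null$; as $\Psi_\Wobs \tensor \Phi_\Wobs$ is additive and bimodule-linear, the whole generated submodule is carried into the target $\Null$-component. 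This is the only place where the specific structure of the $\Null$-component enters, and it is the step I would expect to require the most care, mainly in being precise that the images of generators suffice to control the image of the whole submodule; the rest is routine functoriality of $\tensor$. The same argument restricted to $\Wobs$- and $\Null$-components alone proves the analogous statement for $\CoisoBimodPair$.
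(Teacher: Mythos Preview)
Your proposal is correct and follows essentially the same route as the paper's proof: both first note that the $\TOTAL$- and $\WOBS$-components are ordinary bimodule morphisms satisfying the intertwining relation with the $\iota$ maps, and then verify that $(\Psi\tensor\Phi)_\Wobs$ preserves the $\NULL$-component by using $\Psi_\Wobs(\module{F}_\Null)\subseteq\module{F}'_\Null$ and $\Phi_\Wobs(\module{E}_\Null)\subseteq\module{E}'_\Null$ on the two summands of \eqref{eq:NullComponentTensorProduct}. The paper writes this last step as a short chain of set inclusions rather than arguing via generators, but the content is identical.
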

\begin{proof}
    It is clear that $(\Psi \tensor \Phi)_\Total$ and
    $(\Psi \tensor \Phi)_\Wobs$ are a morphism of
    $(\algebra{C}_\Total, \algebra{A}_\Total)$- and
    $(\algebra{C}_{\Wobs}, \algebra{A}_{\Wobs})$-bimodules,
    respectively, fulfilling
    $(\Psi \tensor \Phi)_\Total \circ (\iota_\module{F} \tensor
    \iota_\module{E}) = (\iota_{\module{F}'} \tensor
    \iota_{\module{E}'}) \circ (\Psi_\Wobs \tensor \Phi_\Wobs)$.
    Moreover.
    \begin{align*}
        (\Psi \tensor \Phi)_\Wobs
        \left(
            \module{F} \tensor[\algebra{B}] \module{E}
        \right)_{\Null}
        &=
        (\Psi \tensor \Phi)_\Wobs
        \left(
            \module{F}_{\Wobs}
            \tensor[\algebra{B}_{\Wobs}]
            \module{E}_{\Null}
            +
            \module{F}_{\Null}
            \tensor[\algebra{B}_{\Wobs}]
            \module{E}_{\Wobs}
        \right) \\
        &=
        \Psi_\Wobs(\module{F}_{\Wobs})
        \tensor[\algebra{B}_{\Wobs}]
        \Phi_\Wobs(\module{E}_{\Null})
        +
        \Psi_\Wobs(\module{F}_{\Null})
        \tensor[\algebra{B}_{\Wobs}]
        \Phi_\Wobs(\module{E}_{\Wobs}) \\
        &\subseteq
        \module{F}_{\Wobs}^\prime
        \tensor[\algebra{B}_{\Wobs}]
        \module{E}_{\Null}^\prime
        +
        \module{F}_{\Null}^\prime
        \tensor[\algebra{B}_{\Wobs}]
        \module{E}_{\Wobs}^\prime \\
        &=
        \left(
            \module{F}^\prime
            \tensor[\algebra{B}]
            \module{E}^\prime
        \right)_{\Null}
    \end{align*}
    holds. Hence $\Psi \tensor \Phi$ is a bimodule morphism from
    $\module{F} \tensor[\algebra{B}] \module{E}$ to
    $\module{F}^\prime \tensor[\algebra{B}] \module{E}^\prime$.
\end{proof}

Note that by embedding $\CoisoBimodPair(\algebra{B}, \algebra{A})$
into $\CoisoBimodTriple(\algebra{B}, \algebra{A})$ we can also define
a bimodule $\module{F} \tensor[\algebra{B}] \module{E}$ for bimodules
over coisotropic pairs.  Putting these two lemmas together we obtain
functors
\begin{equation}
    \label{eq:TensorProductFunctorTriples}
    \tensor[\algebra{B}]\colon
    \CoisoBimodTriple{(\algebra{C}, \algebra{B})}
    \times
    \CoisoBimodTriple{(\algebra{B}, \algebra{A})}
    \longrightarrow
    \CoisoBimodTriple{(\algebra{C}, \algebra{A})}.
\end{equation}
and
\begin{equation}
    \label{eq:TensorProductFunctorPairs}
    \tensor[\algebra{B}]\colon
    \CoisoBimodPair{(\algebra{C}, \algebra{B})}
    \times
    \CoisoBimodPair{(\algebra{B}, \algebra{A})}
    \longrightarrow
    \CoisoBimodPair{(\algebra{C}, \algebra{A})}.
\end{equation}
as wanted.

As a second step we need to show that the tensor product fulfills the
associativity and identity properties of a bicategory.
\begin{lemma}
    \label{lemma:AssociativityTensorProduct}%
    For coisotropic triples
    $\algebra{A}, \algebra{B}, \algebra{C}, \algebra{D}$ of algebras
    over $\mathbb{k}$ there is a natural isomorphism
    \begin{equation}
        \label{eq:Asso}
        \asso\colon
        \tensor[\algebra{B}] \circ\mathop{} (\tensor[\algebra{C}] \times \id)
        \Longrightarrow
        \tensor[\algebra{C}] \circ (\mathord{\id} \times \tensor[\algebra{B}]),
    \end{equation}
    given by the natural isomorphisms of associativity for usual
    bimodules
    \begin{equation}
        \label{eq:AssoTot}
        \asso_\Total\colon
        \left(
            \module{G}_{\Total}
            \tensor[\algebra{C}_{\Total}]
            \module{F}_{\Total}
        \right)
        \tensor[\algebra{B}_{\Total}]
        \module{E}_{\Total}
        \ni (z \tensor y) \tensor x
        \longmapsto
        z \tensor (y \tensor x) \in
        \module{G}_{\Total}
        \tensor[\algebra{C}_{\Total}]
        \left(
            \module{F}_{\Total}
            \tensor[\algebra{B}_{\Total}]
            \module{E}_{\Total}
        \right)
    \end{equation}
    and
    \begin{equation}
        \label{eq:AssoN}
        \asso_\Wobs\colon
        \left(
            \module{G}_{\Wobs}
            \tensor[\algebra{C}_{\Wobs}]
            \module{F}_{\Wobs}
        \right)
        \tensor[\algebra{B}_{\Wobs}]
        \module{E}_{\Wobs}
        \ni (z \tensor y) \tensor x
        \longmapsto
        z \tensor (y \tensor x) \in
        \module{G}_{\Wobs}
        \tensor[\algebra{C}_{\Wobs}]
        \left(
            \module{F}_{\Wobs}
            \tensor[\algebra{B}_{\Wobs}]
            \module{E}_{\Wobs}
        \right)
    \end{equation}
    for $\module{G} \in \CoisoBimodTriple{(\algebra{D},\algebra{C})}$,
    $\module{F} \in \CoisoBimodTriple{(\algebra{C},\algebra{B})}$, and
    $\module{E} \in \CoisoBimodTriple{(\algebra{B}, \algebra{A})}$.
\end{lemma}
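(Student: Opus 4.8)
The plan is to define $\asso$ componentwise by the classical associativity constraints, exactly as displayed in the statement, and then to check the only things that are not automatic: that the pair $(\asso_\Total,\asso_\Wobs)$ actually lands in the category $\CoisoBimodTriple(\algebra{D},\algebra{A})$, that it is an isomorphism there, and that the assignment is natural in $\module{G},\module{F},\module{E}$. The starting point costs nothing: for ordinary bimodules over ordinary algebras, $\asso_\Total$ is a natural $(\algebra{D}_\Total,\algebra{A}_\Total)$-bimodule isomorphism and $\asso_\Wobs$ a natural $(\algebra{D}_\Wobs,\algebra{A}_\Wobs)$-bimodule isomorphism, with inverse $z \tensor (y \tensor x) \mapsto (z \tensor y) \tensor x$. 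So the real content lies in the interaction with the extra data of a bimodule over a coisotropic triple, namely the structure map $\iota$ and the $\NULL$-sub-bimodule.

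First I would verify compatibility with the structure maps. By Lemma~\ref{lemma:TensorProductBimodules} the map from the $\WOBS$- to the $\TOTAL$-component of a triple tensor product is the tensor product of the individual $\iota$'s, so the required equality $\asso_\Total \circ (\iota_\module{G} \tensor \iota_\module{F} \tensor \iota_\module{E}) = (\iota_\module{G} \tensor (\iota_\module{F} \tensor \iota_\module{E})) \circ \asso_\Wobs$ is nothing but naturality of the classical associator applied to the three bimodule morphisms $\iota_\module{G},\iota_\module{F},\iota_\module{E}$; there is nothing to prove beyond invoking it.

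The one step that requires genuine bookkeeping is showing that $\asso_\Wobs$ maps the $\NULL$-component of $(\module{G}\tensor[\algebra{C}]\module{F})\tensor[\algebra{B}]\module{E}$ onto that of $\module{G}\tensor[\algebra{C}](\module{F}\tensor[\algebra{B}]\module{E})$. Here I would expand both $\NULL$-components with the formula \eqref{eq:NullComponentTensorProduct} applied twice. Unwinding the left-hand side, the $\NULL$-component is generated over $\algebra{D}_\Wobs$ and $\algebra{A}_\Wobs$ by elements $(z\tensor y)\tensor x$ of three types: $z\in\module{G}_\Wobs,\ y\in\module{F}_\Wobs,\ x\in\module{E}_\Null$; or $z\in\module{G}_\Wobs,\ y\in\module{F}_\Null,\ x\in\module{E}_\Wobs$; or $z\in\module{G}_\Null,\ y\in\module{F}_\Wobs,\ x\in\module{E}_\Wobs$. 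Applying $\asso_\Wobs$ sends these into $\module{G}_\Wobs\tensor(\module{F}_\Wobs\tensor\module{E}_\Null)$, $\module{G}_\Wobs\tensor(\module{F}_\Null\tensor\module{E}_\Wobs)$, and $\module{G}_\Null\tensor(\module{F}_\Wobs\tensor\module{E}_\Wobs)$ respectively; the first two sit inside $\module{G}_\Wobs\tensor[\algebra{C}_\Wobs](\module{F}\tensor[\algebra{B}]\module{E})_\Null$ and the third inside $\module{G}_\Null\tensor[\algebra{C}_\Wobs](\module{F}\tensor[\algebra{B}]\module{E})_\Wobs$, so their span is exactly the target $\NULL$-component. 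The reverse inclusion follows by running the identical computation with $\asso_\Wobs^{-1}$. The only delicate point is the same one flagged in the proof of Lemma~\ref{lemma:TensorProductBimodules}: these ``partial tensor products'' are images of the obvious $\id\tensor\iota$-type maps, not literal submodules, so I would phrase the whole argument on generators and use that $\asso_\Wobs$ intertwines the corresponding inclusions.

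With these checks in place, $(\asso_\Total,\asso_\Wobs)$ is a morphism in $\CoisoBimodTriple(\algebra{D},\algebra{A})$ by Definition~\ref{definition:CoisoTriplesBimodules}, and since both components are isomorphisms and the inverse pair $(\asso_\Total^{-1},\asso_\Wobs^{-1})$ passes the same two checks by symmetry, $\asso$ is an isomorphism in this category. Naturality in $(\module{G},\module{F},\module{E})$ is then immediate: a morphism of triple bimodules is determined by its $\TOTAL$- and $\WOBS$-components, and on those the naturality square is exactly the classical naturality square for $\asso_\Total$ and $\asso_\Wobs$ with respect to the $\TOTAL$- and $\WOBS$-parts of the given morphisms, which by Lemma~\ref{lemma:TensorProductBimoduleMorphisms} are precisely the corresponding tensor products of morphisms. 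I expect the $\NULL$-component computation to be the only place where anything beyond transcription of the ordinary-bimodule theory is needed.
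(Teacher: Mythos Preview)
Your proposal is correct and follows essentially the same approach as the paper's own proof: the paper also reduces the claim to checking that $\asso_\Total \circ ((\iota_\module{G} \tensor \iota_\module{F}) \tensor \iota_\module{E}) = (\iota_\module{G} \tensor (\iota_\module{F} \tensor \iota_\module{E})) \circ \asso_\Wobs$ and that $\asso_\Wobs$ preserves the $\NULL$-submodules, declaring both to be ``an easy computation done on factorizing tensors.'' You have simply spelled out that computation in full, including the three-term decomposition of the $\NULL$-components, which the paper leaves implicit.
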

\begin{proof}
    First it is clear that the above definitions have the necessary
    multilinearity to extend to the tensor products at all.  We then
    need to check that
    $\asso_\Total \circ \operatorname{}((\iota_\module{G} \tensor \iota_\module{F})
    \tensor \iota_\module{E}) = (\iota_\module{G} \tensor
    (\iota_\module{F} \tensor \iota_\module{E})) \circ \asso_\Wobs$
    holds and that the morphisms $\asso_\Wobs$ preserve the
    submodules. This is an easy computation done on factorizing
    tensors.
\end{proof}
\begin{lemma}
    \label{lemma:IdentitiesTensorProduct}%
    For coisotropic triples of algebras $\algebra{A}$ and
    $\algebra{B}$ over $\mathbb{k}$ there are natural isomorphisms
    \begin{equation}
        \label{eq:LeftRightIdentities}
        \lidentity\colon
        \tensor[\algebra{B}] \circ\mathop{}
        (\mathord{\Id}_\algebra{B} \times \id)
        \Longrightarrow
        \id
	\quad
        \textrm{and}
        \quad
        \ridentity\colon
        \tensor[\algebra{A}] \circ\mathop{}
        (\id \times \Id_\algebra{A})
        \Longrightarrow
        \id,
    \end{equation}
    given by the left and right identities of the tensor product of
    usual bimodules
    \begin{equation}
        \label{eq:LeftIdentitiesTotalWobs}
        \lidentity_\Total \colon
        \algebra{B}_{\Total}
        \tensor[\algebra{B}_{\Total}]
        \module{E}_{\Total}
        \ni
        b \tensor x
        \longmapsto
        bx
        \in
        \module{E}_\Total
        \quad
        \textrm{and}
        \quad
        \lidentity_\Wobs\colon
        \algebra{B}_{\Wobs}
        \tensor[\algebra{B}_{\Wobs}]
        \module{E}_{\Wobs}
        \ni
        b \tensor x
        \longmapsto
        bx
        \in
        \module{E}_\Wobs
    \end{equation}
    as well as
    \begin{equation}
        \label{eq:RightIdentitesTotalWobs}
        \ridentity_\Total\colon
        \module{E}_{\Total}
        \tensor[\algebra{A}_{\Total}]
        \algebra{A}_{\Total}
        \ni
        x \tensor a
        \longmapsto
        xa
        \in
        \module{E}_\Total
        \quad
        \textrm{and}
        \quad
        \ridentity_\Wobs\colon
        \module{E}_{\Wobs}
        \tensor[\algebra{A}_{\Wobs}]
        \algebra{A}_{\Wobs}
        \ni
        x  \tensor a
        \longmapsto
        xa
        \in
        \module{E}_\Wobs.
    \end{equation}
\end{lemma}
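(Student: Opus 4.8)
The plan is to take the classical unit constraints for bimodules componentwise and then verify the two extra requirements imposed by the coisotropic triple structure: compatibility with the structure maps $\iota$ joining the $\WOBS$- and $\TOTAL$-components, and the correct behaviour on the $\NULL$-components. First I would record that $\lidentity_\Total$, $\lidentity_\Wobs$, $\ridentity_\Total$ and $\ridentity_\Wobs$ are the usual bimodule isomorphisms witnessing that $\algebra{B}_\Total$, $\algebra{B}_\Wobs$ (resp.\ $\algebra{A}_\Total$, $\algebra{A}_\Wobs$) act as a unit for the respective tensor product, each with inverse $x \mapsto \Unit \tensor x$. Since the identity $1$-morphism $\Id_\algebra{B}$ carries the inclusion $\iota_\algebra{B}\colon \algebra{B}_\Wobs \hookrightarrow \algebra{B}_\Total$ as its structure map and the tensor product of Lemma~\ref{lemma:TensorProductBimodules} carries $\iota_\algebra{B} \tensor \iota_\module{E}$, checking the morphism condition $\lidentity_\Total \circ (\iota_\algebra{B} \tensor \iota_\module{E}) = \iota_\module{E} \circ \lidentity_\Wobs$ reduces, on a factorizing tensor $b \tensor x$ with $b \in \algebra{B}_\Wobs$ and $x \in \module{E}_\Wobs$, to $b \cdot \iota_\module{E}(x) = \iota_\module{E}(bx)$, which holds because $\iota_\module{E}$ is a bimodule morphism; the same computation works for $\ridentity$. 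Hence $(\lidentity_\Total, \lidentity_\Wobs)$ and $(\ridentity_\Total, \ridentity_\Wobs)$ are morphisms in the relevant bimodule categories in the sense of Definition~\ref{definition:CoisoTriplesBimodules}.

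The one point that genuinely uses the triple axioms is that the $\WOBS$-component isomorphisms must identify the $\NULL$-components. Using \eqref{eq:NullComponentTensorProduct} I would note that $(\Id_\algebra{B} \tensor[\algebra{B}] \module{E})_\Null = \algebra{B}_\Wobs \tensor[\algebra{B}_\Wobs] \module{E}_\Null + \algebra{B}_\Null \tensor[\algebra{B}_\Wobs] \module{E}_\Wobs$, and that $\lidentity_\Wobs$ maps the first summand onto $\algebra{B}_\Wobs \cdot \module{E}_\Null = \module{E}_\Null$ (the equality because $\Unit \in \algebra{B}_\Wobs$ and $\module{E}_\Null \subseteq \module{E}_\Wobs$ is a sub-bimodule) and the second summand into $\algebra{B}_\Null \cdot \module{E}_\Wobs \subseteq \module{E}_\Null$ by \eqref{eq:BimoduleTripleCondition}. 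Conversely $x \mapsto \Unit \tensor x$ sends $\module{E}_\Null$ into $\algebra{B}_\Wobs \tensor[\algebra{B}_\Wobs] \module{E}_\Null$, which lies inside $(\Id_\algebra{B} \tensor[\algebra{B}] \module{E})_\Null$, and injectivity on this submodule is inherited from injectivity of $\lidentity_\Wobs$ on the ambient tensor product. Thus $\lidentity_\Wobs$ restricts to an isomorphism of the $\NULL$-components, so $\lidentity$ is invertible as a $2$-morphism; for $\ridentity_\Wobs$ one argues identically, now invoking $\module{E}_\Wobs \cdot \algebra{A}_\Null \subseteq \module{E}_\Null$ from \eqref{eq:BimoduleTripleCondition}.

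Finally I would check naturality of $\lidentity$ and $\ridentity$ in $\module{E}$: for a morphism $\Phi = (\Phi_\Total, \Phi_\Wobs)$ the squares $\lidentity \circ (\Id_{\Id_\algebra{B}} \tensor \Phi) = \Phi \circ \lidentity$ and its right analogue commute, which again follows componentwise from the classical naturality of the unit isomorphisms evaluated on factorizing tensors together with the fact that $\Phi_\Total$ and $\Phi_\Wobs$ are bimodule morphisms. I do not expect a genuine obstacle anywhere in this lemma; the only spot requiring some care is the asymmetric bookkeeping for the $\NULL$-component of $\Id_\algebra{B} \tensor[\algebra{B}] \module{E}$ described above, where one summand already exhausts $\module{E}_\Null$ while the other merely lands inside it, so that the unit constraint restricts to a bona fide isomorphism of the $\NULL$-parts.
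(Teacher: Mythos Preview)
Your proposal is correct and follows essentially the same route as the paper: verify the compatibility $\lidentity_\Total \circ (\iota_\algebra{B} \tensor \iota_\module{E}) = \iota_\module{E} \circ \lidentity_\Wobs$ on factorizing tensors using that $\iota_\module{E}$ is a bimodule morphism, and then check that $\lidentity_\Wobs$ carries the $\NULL$-component $\algebra{B}_\Wobs \tensor[\algebra{B}_\Wobs] \module{E}_\Null + \algebra{B}_\Null \tensor[\algebra{B}_\Wobs] \module{E}_\Wobs$ onto $\module{E}_\Null$ via $\algebra{B}_\Wobs \cdot \module{E}_\Null = \module{E}_\Null$ and $\algebra{B}_\Null \cdot \module{E}_\Wobs \subseteq \module{E}_\Null$. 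You are slightly more explicit than the paper in spelling out why the inverse $x \mapsto \Unit \tensor x$ also respects the $\NULL$-parts and in discussing naturality in $\module{E}$, but these are exactly the points the paper absorbs into the phrase ``natural isomorphism'' for the classical components.
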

\begin{proof}
    Since $\lidentity_\Total$ and $\lidentity_\Wobs$ are natural
    isomorphisms we only need to show that they form morphisms of
    bimodules over triples when put together.  For this let
    $b \in \algebra{B}_\Wobs$ and $x \in \module{E}_\Wobs$, then
    \begin{align*}
  	\left(
            \lidentity_\Total
            \circ\mathop{}
            (\iota_\algebra{B} \tensor \iota_\module{E})
        \right)
        (b \tensor x)
  	=
        b \cdot \iota_\module{E}(x)
  	=
        \iota_\module{E}(b x)
  	=
        (\iota_\module{E} \circ \lidentity_\Wobs)
        (b \tensor x)
    \end{align*}
    holds.  Additionally, observe that
    \begin{align*}
        \lidentity_\Wobs
        ((\algebra{B} \tensor[\algebra{B}] \module{E})_{\Null})
        =
        \lidentity_\Wobs
        \left(
            \algebra{B}_{\Wobs}
            \tensor[\algebra{B}_{\Wobs}]
            \module{E}_{\Wobs}
            +
            \algebra{B}_{\Null}
            \tensor[\algebra{B}_{\Wobs}]
            \module{E}_{\Wobs}
        \right)
        =
        \algebra{B}_{\Wobs} \cdot \module{E}_{\Null}
        +
        \algebra{B}_{\Null} \cdot \module{E}_{\Wobs}
        =
        \module{E}_{\Null},
    \end{align*}
    since $\algebra{B}_\Wobs$ is unital and
    $\algebra{B}_\Null \cdot \module{E}_\Wobs \subseteq
    \module{E}_\Null$.
    Hence $\lidentity$ is a natural isomorphism as claimed.  An
    analogous computation shows that also $\ridentity$ is a natural
    isomorphism.
\end{proof}

Finally, Lemmas~\ref{lemma:TensorProductBimodules},
\ref{lemma:TensorProductBimoduleMorphisms},
\ref{lemma:AssociativityTensorProduct}, and
\ref{lemma:IdentitiesTensorProduct} imply that $\CoisoBimodTriple$ and
$\CoisoBimodPair$ in fact form bicategories.
\begin{theorem}[Bicategory of modules over coisotropic triples of algebras]
    \label{theorem:BicategoryModulesCoisotropicAlgebras}%
    Taking co\-iso\-tro\-pic triples of algebras as 0-morphisms,
    bimodules over coisotropic triples of algebras as 1-morphisms and
    morphisms between such bimodules as 2-morphisms, together with the
    tensor product, associativity and identities as constructed in
    Lemmas~\ref{lemma:TensorProductBimodules},
    \ref{lemma:TensorProductBimoduleMorphisms},
    \ref{lemma:AssociativityTensorProduct}, and
    \ref{lemma:IdentitiesTensorProduct} we obtain a bicategory.
\end{theorem}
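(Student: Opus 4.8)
The plan is to check the axioms of a bicategory in the sense recalled in the appendix, using that nearly all of the structural data has already been produced. Coisotropic triples of algebras over $\mathbb{k}$ serve as $0$-morphisms; for two such triples $\algebra{A}, \algebra{B}$ the category of $1$-morphisms from $\algebra{A}$ to $\algebra{B}$ with its $2$-morphisms is $\CoisoBimodTriple(\algebra{B}, \algebra{A})$ of Definition~\ref{definition:CoisoTriplesBimodules}; horizontal composition is the tensor product functor $\tensor[\algebra{B}]$ from Lemmas~\ref{lemma:TensorProductBimodules} and~\ref{lemma:TensorProductBimoduleMorphisms}; the associator $\asso$ comes from Lemma~\ref{lemma:AssociativityTensorProduct} and the left and right unitors $\lidentity, \ridentity$ from Lemma~\ref{lemma:IdentitiesTensorProduct}. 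The identity $1$-morphism on $\algebra{A}$ is the triple $\algebra{A}$ itself, regarded as a $(\algebra{A}, \algebra{A})$-bimodule with $\iota_\algebra{A}\colon \algebra{A}_\Wobs \hookrightarrow \algebra{A}_\Total$ the inclusion; that this is a bimodule over the triple in the sense of Definition~\ref{definition:CoisoTriplesBimodules} amounts precisely to $\algebra{A}_\Null$ being a two-sided ideal in $\algebra{A}_\Wobs$ and a left ideal in $\algebra{A}_\Total$, i.e. to Definition~\ref{definition:CoisotropicTripleAlgebras}.

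It therefore remains to check two things. First, that each $\CoisoBimodTriple(\algebra{B}, \algebra{A})$ is a category: this is the routine verification that componentwise composition of pairs $(\Phi_\Total, \Phi_\Wobs)$ preserves the compatibility conditions $\Phi_\Total \circ \iota_\module{E} = \iota_{\tilde{\module{E}}} \circ \Phi_\Wobs$ and $\Phi_\Wobs(\module{E}_\Null) \subseteq \tilde{\module{E}}_\Null$ and is associative and unital, all inherited from the two underlying categories of ordinary bimodules. Second, that the pentagon and triangle coherence axioms hold for $\asso$, $\lidentity$, $\ridentity$. The key observation here is that a $2$-morphism of $\CoisoBimodTriple$ is by definition a pair of a $\Total$-component and a $\Wobs$-component, so an identity between $2$-morphisms holds if and only if it holds in the $\Total$-layer and in the $\Wobs$-layer separately. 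By the explicit formulas in Lemmas~\ref{lemma:TensorProductBimodules}--\ref{lemma:IdentitiesTensorProduct}, the $\Total$-components of $\tensor[\algebra{B}]$, $\asso$, $\lidentity$, $\ridentity$ are nothing but the tensor product over $\algebra{B}_\Total$ and the usual associativity and unit isomorphisms for $\mathbb{k}$-algebra bimodules, and likewise the $\Wobs$-components with $\algebra{B}_\Total$ replaced by $\algebra{B}_\Wobs$; the $\Null$-components carry no coherence data of their own, being controlled by the $\Wobs$-layer via the submodule-preservation already checked in Lemmas~\ref{lemma:AssociativityTensorProduct} and~\ref{lemma:IdentitiesTensorProduct}. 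Hence both coherence axioms reduce to the corresponding well-known identities in the bicategory $\Bimodules$ of bimodules over $\mathbb{k}$-algebras, applied once in each layer.

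The statement for $\CoisoBimodPair$ follows the same pattern with a single layer, the $\Wobs$-component, which is again a copy of $\Bimodules$; equivalently, one uses the embedding of $\CoisoBimodPair$ into $\CoisoBimodTriple$ as the sub-bicategory on triples $(\algebra{A}_\Wobs, \algebra{A}_\Wobs, \algebra{A}_\Null)$ with $\iota = \id$ and observes that all structure functors and coherence $2$-cells restrict to it. The proof is essentially bookkeeping, and there is no genuine obstacle: the only point requiring a moment's attention is that $\asso$, $\lidentity$, $\ridentity$ really are $2$-morphisms of $\CoisoBimodTriple$ — that is, compatible with the maps $\iota_\module{E}$ and sending $\Null$-components into $\Null$-components — and this has already been settled inside Lemmas~\ref{lemma:AssociativityTensorProduct} and~\ref{lemma:IdentitiesTensorProduct}, so the theorem follows by assembling the four lemmas.
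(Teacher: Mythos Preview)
Your proposal is correct and follows essentially the same approach as the paper: both arguments assemble the four lemmas to provide the bicategory data and then observe that the pentagon and triangle coherences hold because the associator and unitors are, in each of the $\Total$- and $\Wobs$-layers, the classical coherence isomorphisms of $\Bimodules$, so the required identities reduce componentwise to the well-known ones there. Your write-up is more explicit about why a $2$-morphism identity in $\CoisoBimodTriple$ amounts to a pair of identities in the two layers and why the $\Null$-component imposes no additional coherence condition, but the underlying argument is the same.
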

\begin{proof}
    For any two coisotropic triples of algebras $\algebra{A}$ and
    $\algebra{B}$ over $\mathbb{k}$ we have a category
    $\CoisoBimodTriple(\algebra{B},\algebra{A})$ by
    Definition~\ref{definition:CoisoPairsBimodules}.  The tensor
    product as introduced in Lemma~\ref{lemma:TensorProductBimodules}
    is functorial due to
    Lemma~\ref{lemma:TensorProductBimoduleMorphisms}.  Moreover,
    Lemma~\ref{lemma:AssociativityTensorProduct} and
    Lemma~\ref{lemma:IdentitiesTensorProduct} ensure the existence of
    natural transformations of associativity and identity.  Finally,
    the coherences have to be checked, but since the associativity and
    identity natural transformations are for every component those of
    usual bimodules it is clear that the coherence diagrams from
    Definition~\ref{def:bicategory} commute.
\end{proof}

Again, by forgetting the $\TOTAL$-component we obtain also a
bicategory of coisotropic pairs, modules over coisotropic pairs and
morphisms between them:
\begin{definition}[Bicategory of modules over coisotropic algebras]
    \label{definition:BicategoryModulesCoisotropicAlgebras}
    The bicategory with co\-iso\-tropic triples of algebras as
    0-morphisms, bimodules over coisotropic triples of algebras as
    1-morphisms and morphisms between such bimodules as 2-morphisms
    from Theorem~\ref{theorem:BicategoryModulesCoisotropicAlgebras} is
    called the bicategory of coisotropic triples and will be denoted
    by $\CoisoBimodTriple$.  Similarly, the bicategory of coisotropic
    pairs of algebras, bimodules over coisotropic pairs, and bimodule
    morphisms is called the bicategory of coisotropic pairs and will
    be denoted by $\CoisoBimodPair$.
\end{definition}

We can embed the category $\Algebras$ of unital algebras into the
bicategory $\Bimodules$ of unital Algebras with bimodules by turning
an algebra morphism
$\phi \colon \algebra{A} \longrightarrow \algebra{B}$ into a bimodule
$\decorate*[_{\algebra{B}}]{\algebra{B}}{^{\phi}_{\algebra{A}}}$, with
right multiplication twisted by $\phi$, and adding as 2-morphisms only
identities.  Similarly, we can view the category $\CoisoAlgTriple$ of
coisotropic triples of algebras as a bicategory, by simply adding as
2-morphisms only the identities. This allows us to embed
$\CoisoAlgTriple$ into the bicategory $\CoisoBimodTriple$.
\begin{proposition}
    \label{propostion:EmbeddingCoisoAlgInCoisoBimod}%
    The following data defines a functor of bicategories
    $\mathsf{L} \colon \CoisoAlgTriple \longrightarrow \CoisoBimodTriple$:
    \begin{propositionlist}
    \item For $\algebra{A} \in \CoisoAlgTriple$ define
        $\functor{L}(\algebra{A}) = \algebra{A}$.
    \item For $\algebra{A}, \algebra{B} \in \CoisoAlgTriple$ a functor
        $\functor{L} \colon \CoisoAlgTriple(\algebra{B} , \algebra{A})
        \longrightarrow \CoisoBimodTriple(\algebra{B} , \algebra{A})$
        defined by
        \begin{equation}
            \label{eq:EmbeddingCoisoAlgInCoisoBimodFunctor}
            \functor{L}_{\algebra{B}\algebra{A}} (\phi)
            =
            \left(
                \deco
                {}
                {\algebra{B}_{\Total}}
                {{\left(\algebra{B}_\Total\right)}}
                {\phi}
                {\algebra{A}_{\Total}},
                \deco
                {}
                {\algebra{B}_{\Wobs}}
                {{\left(\algebra{B}_{\Wobs}\right)}}
                {\phi}
                {\algebra{A}_{\Wobs}},
                \deco
                {}
                {\algebra{B}_{\Wobs}}
                {{\left(\algebra{B}_{\Null}\right)}}
                {\phi}
                {\algebra{A}_{\Wobs}}
            \right)
        \end{equation}
        for $\phi \in \CoisoAlgTriple(\algebra{B}, \algebra{A})$.
        Here $\phi$ in superscript means that the right module
        structure is twisted with $\phi$.
    \item For
        $\algebra{A} , \algebra{B}, \algebra{C} \in \CoisoAlgTriple$ a
        natural isomorphism
        $\mathsf{m} \colon \tensor[\algebra{B}] \circ
        \operatorname{(\mathsf{L}_{\algebra{C}\algebra{B}} \times
          \functor{L}_{\algebra{B}\algebra{A}})} \longrightarrow
        \functor{L}_{\algebra{C}\algebra{A}} \circ
        \operatorname{(\mathord{\circ}_{\CoisoAlgPair})}$ given by
        \begin{equation}
            \label{eq:EmbeddingCoisoAlgInCoisoBimodMutliplication}
            \mathsf{m}(\psi, \phi)
            \colon
            \deco{}{\algebra{C}}{\algebra{C}}{\psi}{\algebra{B}}
            \tensor
            \deco{}{\algebra{B}}{\algebra{B}}{\phi}{\algebra{A}}
            \ni
            c \tensor b
            \longmapsto
            c \psi(b)
            \in
            \deco
            {}
            {\algebra{C}}
            {\algebra{C}}
            {\psi \circ \phi}
            {\algebra{A}}
        \end{equation}
        for $\phi \in \CoisoAlgTriple(\algebra{B} , \algebra{A})$ and
        $\psi \in \CoisoAlgTriple(\algebra{C},\algebra{B})$.
    \end{propositionlist}
    This functor is even an embedding of bicategories.
\end{proposition}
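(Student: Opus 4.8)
The plan is to verify, in turn, that the three pieces of data are well defined, that the coherence axioms for a functor of bicategories hold, and finally that $\functor{L}$ is injective on $0$-cells and that each local functor $\functor{L}_{\algebra{B}\algebra{A}}$ is injective on objects. The organising principle is that on each of the two components --- the $\Total$-component and the $\Wobs$-component --- all of the relevant structure is literally that of the classical embedding of $\Algebras$ into $\Bimodules$, so that every computation reduces componentwise to a well-known fact, and the only genuinely new input concerns the $\Null$-components and the compatibility with the inclusion maps $\iota$. First I would check that $\functor{L}_{\algebra{B}\algebra{A}}(\phi)$ is an object of $\CoisoBimodTriple(\algebra{B},\algebra{A})$ in the sense of Definition~\ref{definition:CoisoTriplesBimodules}~\ref{item:CoisoTripleBimodule}: its three modules are $\algebra{B}_\Total$, $\algebra{B}_\Wobs$ and $\algebra{B}_\Null$ with the obvious left actions and right actions twisted by $\phi$ (legitimate because $\phi(\algebra{A}_\Wobs)\subseteq\algebra{B}_\Wobs$), the morphism $\iota_{\functor{L}(\phi)}$ is the inclusion $\algebra{B}_\Wobs\hookrightarrow\algebra{B}_\Total$, a bimodule morphism along the two algebra inclusions, and $\algebra{B}_\Null\subseteq\algebra{B}_\Wobs$ is a sub-bimodule because $\algebra{B}_\Null$ is a two-sided ideal in $\algebra{B}_\Wobs$; the two conditions in~\eqref{eq:BimoduleTripleCondition} become $\algebra{B}_\Null\cdot\algebra{B}_\Wobs\subseteq\algebra{B}_\Null$, once more the ideal property, and $\algebra{B}_\Wobs\cdot\phi(\algebra{A}_\Null)\subseteq\algebra{B}_\Null$, which holds since $\phi(\algebra{A}_\Null)\subseteq\algebra{B}_\Null$ and $\algebra{B}_\Null$ is a two-sided ideal. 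Functoriality of $\functor{L}_{\algebra{B}\algebra{A}}$ is then automatic, the hom-categories of $\CoisoAlgTriple$ carrying only identity $2$-morphisms.

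Next I would show that each $\mathsf{m}(\psi,\phi)$ is an isomorphism in $\CoisoBimodTriple(\algebra{C},\algebra{A})$. On the $\Total$- and the $\Wobs$-component the assignment $c\tensor b\mapsto c\psi(b)$ is the canonical isomorphism $\algebra{C}_\bullet\tensor[\algebra{B}_\bullet]\algebra{B}_\bullet\cong\algebra{C}_\bullet$ for the $\psi$-twisted right $\algebra{B}_\bullet$-module $\algebra{C}_\bullet$, and, as in the classical case, it carries the left $\algebra{C}_\bullet$-action and the $\phi$-twisted right $\algebra{A}_\bullet$-action to the $(\psi\circ\phi)$-twisted ones; so on these components it is bijective, left $\algebra{C}_\bullet$-linear and right $\algebra{A}_\bullet$-linear for $\bullet\in\{\Total,\Wobs\}$. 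The compatibility $\mathsf{m}(\psi,\phi)_\Total\circ(\iota_{\functor{L}(\psi)}\tensor\iota_{\functor{L}(\phi)})=\iota_{\functor{L}(\psi\circ\phi)}\circ\mathsf{m}(\psi,\phi)_\Wobs$ is immediate on factorizing tensors, both sides sending $c\tensor b$ to $c\psi(b)$ and $\psi(\algebra{B}_\Wobs)\subseteq\algebra{C}_\Wobs$. For the $\Null$-component I would use Lemma~\ref{lemma:TensorProductBimodules}: by~\eqref{eq:NullComponentTensorProduct} the $\Null$-component of $\functor{L}(\psi)\tensor[\algebra{B}]\functor{L}(\phi)$ is $\algebra{C}_\Wobs\tensor[\algebra{B}_\Wobs]\algebra{B}_\Null+\algebra{C}_\Null\tensor[\algebra{B}_\Wobs]\algebra{B}_\Wobs$, and $c\tensor b\mapsto c\psi(b)$ carries its first summand into $\algebra{C}_\Wobs\cdot\psi(\algebra{B}_\Null)\subseteq\algebra{C}_\Null$ and its second into $\algebra{C}_\Null\cdot\psi(\algebra{B}_\Wobs)\subseteq\algebra{C}_\Null$, using $\psi(\algebra{B}_\Null)\subseteq\algebra{C}_\Null$ and that $\algebra{C}_\Null$ is a two-sided ideal; conversely every $c\in\algebra{C}_\Null$ is the image of $c\tensor\Unit$, which lies in that $\Null$-component, so $\mathsf{m}(\psi,\phi)_\Wobs$ restricts to a bijection of $\Null$-components, whence the inverse again preserves the $\Null$-component. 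Thus $\mathsf{m}(\psi,\phi)$ is an isomorphism of bimodules over triples, and naturality of $\mathsf{m}$ is vacuous because its source is a product of discrete categories. Finally $\functor{L}(\Id_\algebra{A})=\Id_{\functor{L}(\algebra{A})}$ holds on the nose, so the unit comparison $2$-cell may be taken to be the identity.

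It then remains to check the coherence axioms of Definition~\ref{def:bicategory} for a functor of bicategories: the one relating $\mathsf{m}$ to the associators $\asso$ of $\CoisoBimodTriple$ and $\Bimodules$, and the one relating $\mathsf{m}$, the unit comparison, and the unitors $\lidentity,\ridentity$. By Lemmas~\ref{lemma:AssociativityTensorProduct} and~\ref{lemma:IdentitiesTensorProduct} these structure isomorphisms are componentwise the associators and unitors of ordinary bimodules, $\mathsf{m}$ is componentwise the classical compositor, and the unit comparison is the identity; hence each coherence diagram commutes because it commutes separately in the $\Total$- and the $\Wobs$-component, which is exactly the known statement that $\Algebras\hookrightarrow\Bimodules$ is a functor of bicategories --- the same componentwise reduction already used in the proof of Theorem~\ref{theorem:BicategoryModulesCoisotropicAlgebras}. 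For the embedding claim, $\functor{L}$ is the identity on $0$-cells; each $\functor{L}_{\algebra{B}\algebra{A}}$ is faithful, its source being discrete, and it is injective on objects because $\functor{L}(\phi)=\functor{L}(\phi')$ forces $b\,\phi(a)=b\,\phi'(a)$ for all $a\in\algebra{A}_\Total$ and $b\in\algebra{B}_\Total$, hence $\phi=\phi'$ on setting $b=\Unit$; as for $\Algebras\hookrightarrow\Bimodules$, one should not expect $\functor{L}$ to be locally full. I expect no deep obstacle here: the one step needing genuine care is the $\Null$-component bookkeeping above, in particular reading $\module{F}_\Wobs\tensor[\algebra{B}_\Wobs]\module{E}_\Null$ as the submodule of $\module{F}_\Wobs\tensor[\algebra{B}_\Wobs]\module{E}_\Wobs$ spanned by the elementary tensors $y\tensor x$ with $x\in\module{E}_\Null$, as stipulated in Lemma~\ref{lemma:TensorProductBimodules}, while the associativity and unit coherences come essentially for free.
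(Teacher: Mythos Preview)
Your proposal is correct and follows essentially the same approach as the paper: verify that $\functor{L}_{\algebra{B}\algebra{A}}(\phi)$ is a coisotropic bimodule, check that $\mathsf{m}(\psi,\phi)$ is a well-defined morphism preserving the $\NULL$-components (the one place requiring genuine work), dismiss the coherences as componentwise instances of the classical $\Algebras\hookrightarrow\Bimodules$ functor, and conclude injectivity on $0$- and $1$-cells from the unitality of $\algebra{B}_\Total$. Your treatment is in fact slightly more careful than the paper's in that you also verify that $\mathsf{m}(\psi,\phi)_\Wobs$ restricts to a \emph{bijection} of $\NULL$-components (so that the inverse is again a morphism of triples); one small slip is that the relevant coherence diagrams are those of Definition~\ref{definition:FunctorBicategories}, not Definition~\ref{def:bicategory}.
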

\begin{proof}
    The map \eqref{eq:EmbeddingCoisoAlgInCoisoBimodFunctor} defines a
    functor by the usual extension to the discrete category.  Since
    for modules of the form
    \eqref{eq:EmbeddingCoisoAlgInCoisoBimodFunctor} we have
    $\algebra{B}_\Wobs^\phi \subseteq \algebra{B}_\Total^\phi$, we
    only need to check that the natural transformation
    \eqref{eq:EmbeddingCoisoAlgInCoisoBimodMutliplication} preserves
    the $\WOBS$- and $\NULL$-components: for
    $c \tensor b \in \algebra{C}^\psi_\Wobs \tensor
    \algebra{B}^\phi_\Wobs$
    we have
    $\mathsf{m}(\psi,\phi)(c \tensor b) = c \psi(b) \in
    \algebra{C}_\Wobs$
    and for
    $c \tensor b_0 + c_0 \tensor b \in \left( \algebra{C}^\psi
        \tensor[\algebra{B}] \algebra{B}^\phi \right)_\Null =
    \algebra{C}^{\psi}_\Wobs \tensor \algebra{B}_{\Null}^{\phi} +
    \algebra{C}_{\Null}^{\psi} \tensor \algebra{B}^{\phi}_\Wobs$
    we have
    $\mathsf{m}(\psi,\phi)(c \tensor b_0 +c_0 \tensor b) = c\psi(b_0)
    + c_0\psi(b) \in \algebra{C}_{\Null}$.
    The composition and identity coherences from
    Definition~\ref{definition:FunctorBicategories} are easy
    computations.  Finally, $\functor{L}$ is clearly injective on
    objects and also on 1-morphisms, since changing
    $\phi \in \CoisoAlgTriple(\algebra{B},\algebra{A})$ will lead to
    different bimodule structures on
    $\functor{L}_{\algebra{B}\algebra{A}}(\phi)$.
\end{proof}

This embedding of $\CoisoAlgTriple$ in $\CoisoBimodTriple$ can also be
defined by omitting the $\TOTAL$-component, giving an embedding of
$\CoisoAlgPair$ in $\CoisoAlgTriple$.
\begin{remark}
    \label{remark:TOTALWOBSFunctor}%
    Note that also the projections onto the $\TOTAL$-component and the
    $\WOBS$-component yield functors of bicategories
    \begin{equation}
        \label{eq:TOTAL}
        \functor{tot}\colon
        \CoisoBimodTriple \longrightarrow \Bimodules
    \end{equation}
    and
    \begin{equation}
        \label{eq:WOBS}
        \functor{N}\colon
        \CoisoBimodTriple \longrightarrow \Bimodules,
    \end{equation}
    respectively. Note that for these functors the natural
    isomorphisms from Definition~\ref{definition:FunctorBicategories}
    are in fact identities, simplifying the situation.
\end{remark}

%
%

\section{Morita Equivalence of Coisotropic Algebras}
\label{sec:MoritaEquivalence}

In classical Morita theory two algebras (or rings) are Morita
equivalent if and only if they are isomorphic in the bicategory
$\Bimodules$ of algebras, bimodules and bimodule morphisms.  Recall
that two objects in a bicategory are called \emph{isomorphic} if there
exists an (up to 2-morphisms) invertible 1-morphism between them.
Having defined the bicategories $\CoisoBimodTriple$ and
$\CoisoBimodPair$ we can now give a definition of Morita equivalence
of coisotropic triples and pairs of algebras.
\begin{definition}[Morita equivalence of coisotropic algebras]
    \label{def: MoritaEquivalence}%
    Two coisotropic triples $\algebra{A}$, $\algebra{B}$ of algebras
    are called Morita equivalent if they are isomorphic in the
    bicategory $\CoisoBimodTriple$.  Similarly, two coisotropic pairs
    $\algebra{A}$, $\algebra{B}$ are called Morita equivalent if they
    are isomorphic in the bicategory $\CoisoBimodPair$.  An invertible
    bimodule $\algbimodule{B}{E}{A}$ implementing a Morita equivalence
    of $\algebra{A}$ and $\algebra{B}$ is called Morita equivalence
    bimodule in both cases.
\end{definition}

Thus Morita equivalence of coisotropic algebras is completely encoded
in the so called Picard bigroupoids $\Pic(\CoisoBimodTriple)$ and
$\Pic(\CoisoBimodPair)$, given by all coisotropic algebras and all
corresponding invertible 1- and 2-morphisms.

Let $\algebra{A}, \algebra{B} \in \CoisoAlgTriple$ be Morita
equivalent, and let furthermore
$\module{E} \in \CoisoBimodTriple(\algebra{B},\algebra{A})$ be a
$(\algebra{B},\algebra{A})$-bimodule implementing Morita equivalence
of $\algebra{A}$ and $\algebra{B}$.  Thus we assume that there exists
a $(\algebra{A}, \algebra{B})$-bimodule
$\module{E}^\prime \in \CoisoBimodTriple(\algebra{A},\algebra{B})$ and
isomorphisms
\begin{equation}
    \label{eq:MoritaEquivalenceIso}
    \phi\colon
    \module{E}^\prime \tensor[\algebra{B}] \module{E}
    \longrightarrow
    \algebra{A}
    \quad
    \textrm{and}
    \quad
    \psi\colon
    \module{E} \tensor[\algebra{B}] \module{E}^\prime
    \longrightarrow
    \algebra{B}
\end{equation}
such that
\begin{equation}
    \label{eq:MoritaEquivalenceIsoCompatibility}
    \psi(x \tensor x^\prime) \cdot y
    =
    x \cdot \phi( x^\prime \tensor y)
\end{equation}
holds for all $x, y \in \module{E}$ and
$x^\prime \in \module{E}^\prime$.  Note that
\eqref{eq:MoritaEquivalenceIsoCompatibility} can always be achieved by
turning a usual equivalence into an adjoint equivalence, see
\cite[A.1.3]{gurski:2007a}. The fact that
\eqref{eq:MoritaEquivalenceIso} are morphisms of coisotropic bimodules
means in particular that the diagram
\begin{equation}
    \label{eq:TensorProductStaysInjective}
    \begin{tikzcd}[column sep = large, row sep = large]
        \module{E}_\Total \tensor \module{E}_\Total^\prime
        \arrow{r}{\psi_\Total}
        & \algebra{B}_\Total \\
        \module{E}_\Wobs \tensor \module{E}_\Wobs^\prime
        \arrow{r}{\psi_\Wobs}
        \arrow{u}{\iota_\module{E} \tensor \iota_{\module{E}^\prime}}
        & \algebra{B}_\Wobs
        \arrow[swap,hookrightarrow]{u}{\iota_\algebra{B}}
    \end{tikzcd}
\end{equation}
commutes.  Then, since $\psi_\Wobs$ and $\iota_\algebra{B}$ are
injective, so is
$\psi_\Total \circ \iota_\module{E} \tensor \iota_{\module{E}^\prime}
= \iota_\algebra{B} \circ \psi_\Wobs$
and hence also $\iota_\module{E} \tensor \iota_{\module{E}^\prime}$.

Since by projecting onto the $\TOTAL$- and $\WOBS$-components yields
functors of bicategories
$\functor{tot}\colon \CoisoBimodTriple \longrightarrow \Bimodules$
and $\functor{N}\colon \CoisoBimodTriple \longrightarrow \Bimodules$
to the bicategory of algebras and bimodules according to
Remark~\ref{remark:TOTALWOBSFunctor} we know that Morita equivalence
bimodules get mapped to Morita equivalence bimodules.  Hence, by the
classical theory of Morita equivalence for algebras we know, in
particular, that $\module{E}_\Total$ and $\module{E}_\Wobs$ are
finitely generated projective modules, thus we have
\begin{align}
    \label{eq:MoritaEquivalenceFGPTotal}
    \module{E}_\Total
    &\simeq
    \mathsf{e}_\Total \algebra{A}_\Total^n
    \\
    \shortintertext{and}
    \label{eq:MoritaEquivalenceFGPWobs}
    \module{E}_\Wobs
    &\simeq
    \mathsf{e}_\Wobs \algebra{A}_\Wobs^m
\end{align}
for some $n, m \in \mathbb{N}$ and idempotents
$\mathsf{e}_\Total \in \End(\algebra{A}_\Total^n)$ and
$\mathsf{e}_\Wobs \in \End(\algebra{A}_\Wobs^m)$.  The following lemma
gives a way to relate these two finitely generated projective modules:
\begin{lemma}
    \label{lemma:DualBasisMoritaEquivalenceBimodule}%
    Let $\module{E} \in \CoisoBimodTriple(\algebra{B},\algebra{A})$ be
    a Morita equivalence bimodule of coisotropic algebras
    $\algebra{A}, \algebra{B} \in \CoisoAlgTriple$.  Then every dual
    basis $\{e_j, e^j\}_{j = 1, \ldots, n}$ of the finitely generated
    projective module $\module{E}_\Wobs$ gives rise to a dual basis
    $\{e_j^\Total, e^j_\Total\}_{j = 1, \ldots, m}$ for
    $\module{E}_\Total$, given by
    \begin{equation}
        \label{eq:DualBasisMoritaEquivalenceBimoduleBasis}
	e_j^\Total = \iota_\module{E}(e_j)
    \end{equation}
    and
    \begin{equation}
	\label{eq:DualBasisMoritaEquivalenceBimoduleDualBasis}
	e^j_\Total(x)
        =
	\bigg(
            \sum_{i=1}^{k}
            (\iota_\algebra{A} \circ e^j) (x_i^{\Unit_\Wobs})
            \cdot
            \phi_\Total
            \Big(
                \iota_{\module{E}^\prime}(y_i^{\Unit_\Wobs}) \tensor x
            \Big)
        \bigg),
    \end{equation}
    where $\Unit_{\algebra{B}_\Wobs} = \psi_\Wobs \left(
        \sum_{i=1}^{k} x_i^{\Unit_\Wobs} \tensor y_i^{\Unit_\Wobs}
    \right)$.  For $x_\Wobs \in \module{E}_\Wobs$ the dual basis
    \eqref{eq:DualBasisMoritaEquivalenceBimoduleDualBasis}
    simplifies to
    \begin{equation}
        \label{eq:DualBasisOnEWobs}
	e^j_\Total(\iota_\module{E}(x_\Wobs))
        =
        \iota_\algebra{A}\left(e^j(x_\Wobs)\right).
    \end{equation}
\end{lemma}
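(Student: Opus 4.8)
The plan is to build the asserted dual basis of $\module{E}_\Total$ out of two ingredients — the given dual basis $\{e_j,e^j\}$ of $\module{E}_\Wobs$ and the \emph{canonical} dual basis of $\module{E}_\Total$ coming from the Morita context — and then to read off \eqref{eq:DualBasisOnEWobs} from the way the two are glued together. First I would record the structural facts I intend to use: since projecting onto the $\TOTAL$- and $\WOBS$-components are functors of bicategories (Remark~\ref{remark:TOTALWOBSFunctor}), for $\bullet\in\{\Total,\Wobs\}$ the component $\psi_\bullet$ is an isomorphism exhibiting $\module{E}_\bullet$ as a Morita equivalence bimodule over $\algebra{B}_\bullet$ and $\algebra{A}_\bullet$; the identity \eqref{eq:MoritaEquivalenceIsoCompatibility} holds in each component separately; and, because $\phi$ and $\psi$ are $1$-morphisms of $\CoisoBimodTriple$, the squares $\iota_\algebra{A}\circ\phi_\Wobs=\phi_\Total\circ(\iota_{\module{E}'}\tensor\iota_\module{E})$ and $\iota_\algebra{B}\circ\psi_\Wobs=\psi_\Total\circ(\iota_\module{E}\tensor\iota_{\module{E}'})$ commute (the latter is the square in \eqref{eq:TensorProductStaysInjective}). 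Applying the unital inclusion $\iota_\algebra{B}$ to $\Unit_{\algebra{B}_\Wobs}=\psi_\Wobs\big(\sum_{i=1}^k x_i^{\Unit_\Wobs}\tensor y_i^{\Unit_\Wobs}\big)$ and using the second square yields $\Unit_{\algebra{B}_\Total}=\psi_\Total\big(\sum_i\iota_\module{E}(x_i^{\Unit_\Wobs})\tensor\iota_{\module{E}'}(y_i^{\Unit_\Wobs})\big)$. Multiplying by an arbitrary $x\in\module{E}_\Total$ on the right and applying \eqref{eq:MoritaEquivalenceIsoCompatibility} in the $\TOTAL$-component shows that $\iota_\module{E}(x_i^{\Unit_\Wobs})$ together with the functionals $x\mapsto\phi_\Total(\iota_{\module{E}'}(y_i^{\Unit_\Wobs})\tensor x)$, $i=1,\dots,k$, form a dual basis of $\module{E}_\Total$ as a right $\algebra{A}_\Total$-module.

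With this in hand the three claims fall out. Right $\algebra{A}_\Total$-linearity of the functionals in \eqref{eq:DualBasisMoritaEquivalenceBimoduleDualBasis} is immediate from $\algebra{A}_\Total$-bilinearity of $\phi_\Total$. For the dual-basis identity $x=\sum_j e_j^\Total\cdot e^j_\Total(x)$ I would reorganize the finite double sum so that the $j$-summation acts first, arriving at $\sum_i\big(\sum_j\iota_\module{E}(e_j)\cdot\iota_\algebra{A}(e^j(x_i^{\Unit_\Wobs}))\big)\cdot\phi_\Total(\iota_{\module{E}'}(y_i^{\Unit_\Wobs})\tensor x)$; since $\iota_\module{E}$ is a bimodule morphism along the inclusion $\algebra{A}_\Wobs\subseteq\algebra{A}_\Total$ and $e^j(x_i^{\Unit_\Wobs})\in\algebra{A}_\Wobs$, the inner sum equals $\iota_\module{E}\big(\sum_j e_j\cdot e^j(x_i^{\Unit_\Wobs})\big)=\iota_\module{E}(x_i^{\Unit_\Wobs})$ by the dual-basis property of $\{e_j,e^j\}$ on $\module{E}_\Wobs$, and what remains is exactly the canonical expansion of $x$ from the first step.

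For the simplification \eqref{eq:DualBasisOnEWobs}, I would evaluate $e^j_\Total$ on $\iota_\module{E}(x_\Wobs)$ for $x_\Wobs\in\module{E}_\Wobs$, use the first commuting square to replace $\phi_\Total(\iota_{\module{E}'}(y_i^{\Unit_\Wobs})\tensor\iota_\module{E}(x_\Wobs))$ by $\iota_\algebra{A}(\phi_\Wobs(y_i^{\Unit_\Wobs}\tensor x_\Wobs))$, and then pull the sum through the algebra morphism $\iota_\algebra{A}$ and the right $\algebra{A}_\Wobs$-linear map $e^j$ to obtain $\iota_\algebra{A}\big(e^j\big(\sum_i x_i^{\Unit_\Wobs}\cdot\phi_\Wobs(y_i^{\Unit_\Wobs}\tensor x_\Wobs)\big)\big)$; a final use of \eqref{eq:MoritaEquivalenceIsoCompatibility} in the $\WOBS$-component rewrites the inner argument as $\big(\sum_i\psi_\Wobs(x_i^{\Unit_\Wobs}\tensor y_i^{\Unit_\Wobs})\big)\cdot x_\Wobs=\Unit_{\algebra{B}_\Wobs}\cdot x_\Wobs=x_\Wobs$, which gives $e^j_\Total(\iota_\module{E}(x_\Wobs))=\iota_\algebra{A}(e^j(x_\Wobs))$.

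I do not expect a genuine obstacle; the proof is bookkeeping, and the only thing to be careful about is that every structural input is actually available — the two naturality squares come from $\phi$ and $\psi$ being $1$-morphisms in $\CoisoBimodTriple$, the componentwise form of \eqref{eq:MoritaEquivalenceIsoCompatibility} is just that identity read in each component, and $\iota_\algebra{A}$, $\iota_\algebra{B}$ are the literal subalgebra inclusions, hence unital algebra maps, since a coisotropic triple seen as a bimodule over itself has the inclusion maps for its $\iota$'s. One point worth flagging explicitly is that the dual basis produced in the first step is a priori indexed by the length $k$ of the chosen preimage of $\Unit_{\algebra{B}_\Wobs}$; the content of the lemma is precisely that it can be traded for one indexed by the given dual basis of $\module{E}_\Wobs$.
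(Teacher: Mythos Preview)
Your proof is correct and follows essentially the same route as the paper: both arguments transport $\Unit_{\algebra{B}_\Wobs}$ to $\Unit_{\algebra{B}_\Total}$ via the square \eqref{eq:TensorProductStaysInjective}, use \eqref{eq:MoritaEquivalenceIsoCompatibility} in the $\TOTAL$-component to obtain the canonical dual basis of $\module{E}_\Total$ indexed by $i$, and then insert the dual-basis expansion of $x_i^{\Unit_\Wobs}$ in $\module{E}_\Wobs$ to rewrite it in terms of $\iota_\module{E}(e_j)$ and the functionals $e^j_\Total$. Your verification of \eqref{eq:DualBasisOnEWobs} is slightly more detailed than the paper's (which only alludes to the commuting square), but the substance is the same.
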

\begin{proof}
    First we note that we actually find elements
    $x_i^{\Unit_\Wobs} \in \module{E}_\Wobs$ and
    $y_i^{\Unit_\Wobs} \in \module{E}'_\Wobs$ such that
    $\Unit_{\algebra{B}_\Wobs} = \psi_\Wobs \big( \sum_{i=1}^{k}
    x_i^{\Unit_\Wobs} \tensor y_i^{\Unit_\Wobs} \big)$
    since $\psi_\Wobs$ is surjective.  Since
    $\algebra{B}_\Wobs \subseteq \algebra{B}_\Total$ is a unital
    subalgebra it follows that
    $\Unit_{\algebra{B}_\Total} = \psi_\Total \big( \sum_{i=1}^{k}
    \iota_\module{E}(x_i^{\Unit_\Wobs}) \tensor
    \iota_{\module{E}^\prime}(y_i^{\Unit_\Wobs}) \big) $
    by using the commutativity of
    \eqref{eq:TensorProductStaysInjective}.  Now fix a dual basis of
    $\module{E}_\Wobs$ such that for any
    $x_\Wobs \in \module{E}_\Wobs$ we have
    $x_\Wobs = \sum_{j=1}^{m} e_j \cdot e^j(x)$.  Then for
    $x \in \module{E}_\Total$ we get
    \begin{align*}
	x
	&=
        \Unit_{\algebra{B}_\Total} \cdot x \\
        &=
        \sum_{i=1}^{k} \psi_\Total
	\left(
            \iota_\module{E}(x_i^{\Unit_\Wobs})
            \tensor
            \iota_{\module{E}^\prime}(y_i^{\Unit_\Wobs})
	\right) \cdot x \\
	&\overset{(a)}{=}
        \sum_{i=1}^{k}
        \iota_\module{E}(x_i^{\Unit_\Wobs})
	\cdot
        \phi_\Total
        \left(
            \iota_{\module{E}^\prime}(y_i^{\Unit_\Wobs}) \tensor x
        \right) \\
	&=
        \sum_{i=1}^{k}
	\iota_\module{E}
        \bigg(
            \sum_{j=1}^{m}
            e_j
            \cdot
            e^j \left(x_i^{\Unit_\Wobs}\right)
        \bigg)
	\cdot
        \phi_\Total
        \left(
            \iota_{\module{E}^\prime}(y_i^{\Unit_\Wobs}) \tensor x
        \right) \\
	&=
        \sum_{i=1}^{k}
        \bigg(
            \sum_{j=1}^{m}
            \iota_\module{E}(e_j)
            \cdot
            (\iota_\algebra{A} \circ e^j)
            \left(x_i^{\Unit_\Wobs}\right)
        \bigg)
	\cdot
        \phi_\Total
        \left(
            \iota_{\module{E}^\prime}(y_i^{\Unit_\Wobs})
            \tensor
            x
        \right) \\
	&=
        \sum_{j=1}^{m}
	\underbrace{\iota_\module{E}(e_j)}_{=: e_j^\Total}
        \cdot
	\underbrace{\bigg(
              \sum_{i=1}^{k}
              (\iota_\algebra{A} \circ e^j)
              (x_i^{\Unit_\Wobs})
              \cdot
              \phi_\Total
              \left(
                  \iota_{\module{E}^\prime}(y_i^{\Unit_\Wobs})
                  \tensor
                  x
              \right)
          \bigg)}_{=:e^j_\Total(x)},
    \end{align*}
    where we used \eqref{eq:MoritaEquivalenceIsoCompatibility} in
    $(a)$.  Note that indeed $e_j^\Total \in \module{E}_\Total$ and
    $e^j_\Total \in \module{E}_\Total^*$.  Now for
    $x_\Wobs \in \module{E}_\Wobs$ we compute using
    \eqref{eq:TensorProductStaysInjective} that
    $e^j_\Total(\iota_\module{E}(x_\Wobs)) =
    \iota_\algebra{A}\left(e^j(x_\Wobs)\right)$ holds.
\end{proof}

Thus we can choose the isomorphisms
\eqref{eq:MoritaEquivalenceFGPTotal} and
\eqref{eq:MoritaEquivalenceFGPWobs} such that $n = m$.  This leads us
to the next lemma, showing that in addition for a Morita equivalence
bimodule the projectors for the $\TOTAL$- and $\WOBS$-components
agree:
\begin{lemma}
    \label{lemma:SameIdempotents}%
    Let $\module{E} \in \CoisoBimodTriple(\algebra{B},\algebra{A})$ be
    a Morita equivalence bimodule of coisotropic algebras
    $\algebra{A}, \algebra{B} \in \CoisoAlgTriple$.  Then we can
    choose the isomorphisms
    $\module{E}_\Total \simeq \mathsf{e}_\Total \algebra{A}^n_\Total$
    and $\module{E}_\Wobs \simeq \mathsf{e}_\Wobs \algebra{A}^n_\Wobs$
    such that
    $\mathsf{e}_\Total = \mathsf{e}_\Wobs \in
    \Mat_n(\algebra{A}_\Wobs)$.
\end{lemma}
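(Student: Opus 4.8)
The plan is to build \emph{both} idempotents out of one and the same dual basis of $\module{E}_\Wobs$, to transport this dual basis to $\module{E}_\Total$ by Lemma~\ref{lemma:DualBasisMoritaEquivalenceBimodule}, and then to observe that the two resulting idempotent matrices coincide entry by entry.

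First I would recall the standard correspondence between dual bases and idempotents: if $\{f_j, f^j\}_{j=1,\ldots,n}$ is a dual basis of a finitely generated projective right $R$-module $P$, meaning that $x = \sum_{j=1}^n f_j \cdot f^j(x)$ for all $x \in P$, then the matrix $\mathsf{e} \in \Mat_n(R)$ with entries $\mathsf{e}_{ij} = f^i(f_j)$ is an idempotent and $P \ni x \mapsto (f^1(x), \ldots, f^n(x)) \in \mathsf{e} R^n$ is an isomorphism of right $R$-modules. Since $\module{E}$ is a Morita equivalence bimodule, $\module{E}_\Wobs$ is a finitely generated projective right $\algebra{A}_\Wobs$-module as observed before \eqref{eq:MoritaEquivalenceFGPWobs}. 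I therefore fix a dual basis $\{e_j, e^j\}_{j=1,\ldots,n}$ of $\module{E}_\Wobs$ and set $\mathsf{e}_\Wobs \in \Mat_n(\algebra{A}_\Wobs)$, $(\mathsf{e}_\Wobs)_{ij} = e^i(e_j)$.

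Next I would apply Lemma~\ref{lemma:DualBasisMoritaEquivalenceBimodule} to this dual basis: it produces a dual basis $\{e_j^\Total, e^j_\Total\}_{j=1,\ldots,n}$ of $\module{E}_\Total$ over $\algebra{A}_\Total$ with $e_j^\Total = \iota_\module{E}(e_j)$ and, by \eqref{eq:DualBasisOnEWobs}, with $e^j_\Total(\iota_\module{E}(x_\Wobs)) = \iota_\algebra{A}(e^j(x_\Wobs))$ for all $x_\Wobs \in \module{E}_\Wobs$. Forming the idempotent $\mathsf{e}_\Total \in \Mat_n(\algebra{A}_\Total)$ associated to this dual basis and evaluating its entries on the vectors $e_j^\Total = \iota_\module{E}(e_j)$ with $e_j \in \module{E}_\Wobs$, the identity \eqref{eq:DualBasisOnEWobs} yields
\[
    (\mathsf{e}_\Total)_{ij}
    = e^i_\Total(e_j^\Total)
    = e^i_\Total(\iota_\module{E}(e_j))
    = \iota_\algebra{A}\big(e^i(e_j)\big)
    = \iota_\algebra{A}\big((\mathsf{e}_\Wobs)_{ij}\big).
\]
Identifying $\algebra{A}_\Wobs$ with its image under the unital inclusion $\iota_\algebra{A}\colon \algebra{A}_\Wobs \hookrightarrow \algebra{A}_\Total$, this shows that every entry of $\mathsf{e}_\Total$ already lies in $\algebra{A}_\Wobs$ and that $\mathsf{e}_\Total = \mathsf{e}_\Wobs$ as elements of $\Mat_n(\algebra{A}_\Wobs)$. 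Choosing the isomorphisms $\module{E}_\Wobs \simeq \mathsf{e}_\Wobs \algebra{A}^n_\Wobs$ and $\module{E}_\Total \simeq \mathsf{e}_\Total \algebra{A}^n_\Total$ to be precisely the ones induced by these two matched dual bases then establishes the claim.

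I expect no serious obstacle here, since the genuine work was already done in Lemma~\ref{lemma:DualBasisMoritaEquivalenceBimodule}: it simultaneously guarantees that the transported family $\{e_j^\Total, e^j_\Total\}$ is again a dual basis, that it is indexed by the same set $\{1,\ldots,n\}$ (so the two idempotents live in matrix algebras of the same size, in accordance with the earlier reduction to the case $n = m$), and it supplies the crucial compatibility \eqref{eq:DualBasisOnEWobs}. The only point requiring a little care is the bookkeeping: one must make sure that the isomorphisms in \eqref{eq:MoritaEquivalenceFGPTotal} and \eqref{eq:MoritaEquivalenceFGPWobs} are not chosen arbitrarily but are exactly those coming from these matched dual bases, so that the two idempotents are literally the same matrix.
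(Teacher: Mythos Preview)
Your proposal is correct and follows essentially the same argument as the paper: fix a dual basis of $\module{E}_\Wobs$, transport it to $\module{E}_\Total$ via Lemma~\ref{lemma:DualBasisMoritaEquivalenceBimodule}, and use \eqref{eq:DualBasisOnEWobs} to verify $(\mathsf{e}_\Total)_{ij} = \iota_\algebra{A}\big((\mathsf{e}_\Wobs)_{ij}\big)$ entry by entry. Your write-up is somewhat more detailed in recalling the dual-basis-to-idempotent correspondence and in flagging the bookkeeping point about choosing the isomorphisms compatibly, but the mathematical content is the same.
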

\begin{proof}
    First we fix a dual basis of $\module{E}_\Wobs$ with corresponding
    dual basis of $\module{E}_\Total$ according to
    Lemma~\ref{lemma:DualBasisMoritaEquivalenceBimodule}.  Then the
    components of $\mathsf{e}_\Total \in \Mat_n(\algebra{A}_\Total)$
    are given by
    \begin{equation*}
	(\mathsf{e}_\Total)_{ij}
	= e^i_{\Total}(e_j^\Total)
	= e^i_\Total(\iota_\module{E}(e_j))
	= \iota_\algebra{A}\left(e^i(e_j)\right)
	= \iota_\algebra{A}\left( (\mathsf{e}_\Wobs)_{ij} \right).
    \end{equation*}
    Thus viewing elements of $\algebra{A}_\Wobs$ as elements in
    $\algebra{A}_\Total$ via the embedding $\iota_\algebra{A}$ gives
    the statement.
\end{proof}

As a first consequence, given a classical Morita equivalence bimodule
$\module{E}_\Total = \mathsf{e}_\Total \algebra{A}_\Total^n$ for the
$\TOTAL$-components $\algebra{B}_\Total$ and $\algebra{A}_\Total$ we
know that necessarily this can only be a Morita equivalence bimodule
for coisotropic triples of algebras
$\algebra{A} = (\algebra{A}_\Total, \algebra{A}_\Wobs,
\algebra{A}_\Null)$
and
$\algebra{B} = (\algebra{B}_\Total, \algebra{B}_\Wobs,
\algebra{B}_\Null)$
if $\mathsf{e}_\Total \in \Mat_n(\algebra{A}_\Wobs)$.

Now that we clarified the structure and relation of the $\WOBS$- and
$\TOTAL$-components of coisotropic Morita equivalence bimodules let us
turn to a description of the $\NULL$-part.  Here by definition of a
coisotropic module we have
$\module{E}_\Wobs \cdot \algebra{A}_\Null \subseteq \module{E}_\Null$.
For Morita equivalence bimodules we get in fact equality.
\begin{lemma}
    \label{lemma:ZeroComponentMoritaEquivalenceBimodules}%
    Let $\module{E} \in \CoisoBimodTriple(\algebra{B},\algebra{A})$ be
    a Morita equivalence bimodule of coisotropic algebras
    $\algebra{A}, \algebra{B} \in \CoisoAlgTriple$.  Then
    $\module{E}_\Wobs \cdot \algebra{A}_\Null = \module{E}_\Null$.
\end{lemma}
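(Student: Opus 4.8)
The inclusion $\module{E}_\Wobs \cdot \algebra{A}_\Null \subseteq \module{E}_\Null$ is already part of the definition of a bimodule over coisotropic triples, see \eqref{eq:BimoduleTripleCondition}; so the task is to establish the reverse inclusion $\module{E}_\Null \subseteq \module{E}_\Wobs \cdot \algebra{A}_\Null$. The plan is to run the same dual-basis manipulation that appeared in the proof of Lemma~\ref{lemma:DualBasisMoritaEquivalenceBimodule}, now feeding in an element of $\module{E}_\Null$ instead of an arbitrary element of $\module{E}_\Wobs$, and to read off that the resulting coefficients land in $\algebra{A}_\Null$.

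Concretely, I would first record two facts. Since $\psi\colon \module{E} \tensor[\algebra{B}] \module{E}^\prime \longrightarrow \algebra{B}$ is part of an isomorphism of coisotropic bimodules, its $\WOBS$-component $\psi_\Wobs$ is surjective, so there are elements $x_i \in \module{E}_\Wobs$ and $y_i \in \module{E}^\prime_\Wobs$ with $\Unit_{\algebra{B}_\Wobs} = \psi_\Wobs\big(\sum_i x_i \tensor y_i\big)$. Secondly, since $\phi\colon \module{E}^\prime \tensor[\algebra{B}] \module{E} \longrightarrow \algebra{A}$ is a morphism of coisotropic bimodules into $\algebra{A}$ with its tautological coisotropic bimodule structure (whose $\NULL$-component is $\algebra{A}_\Null$), its $\WOBS$-component sends the $\NULL$-part of $\module{E}^\prime \tensor[\algebra{B}] \module{E}$ into $\algebra{A}_\Null$; in view of \eqref{eq:NullComponentTensorProduct} this gives in particular $\phi_\Wobs\big(\module{E}^\prime_\Wobs \tensor[\algebra{B}_\Wobs] \module{E}_\Null\big) \subseteq \algebra{A}_\Null$.

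Then, for $\xi \in \module{E}_\Null \subseteq \module{E}_\Wobs$, I would compute, using unitality of $\algebra{B}_\Wobs$ and the compatibility \eqref{eq:MoritaEquivalenceIsoCompatibility} on the $\WOBS$-components,
\begin{equation*}
    \xi
    =
    \Unit_{\algebra{B}_\Wobs} \cdot \xi
    =
    \sum_i \psi_\Wobs(x_i \tensor y_i) \cdot \xi
    =
    \sum_i x_i \cdot \phi_\Wobs(y_i \tensor \xi).
\end{equation*}
Because $\xi \in \module{E}_\Null$, each $y_i \tensor \xi$ lies in $\module{E}^\prime_\Wobs \tensor[\algebra{B}_\Wobs] \module{E}_\Null$, hence $\phi_\Wobs(y_i \tensor \xi) \in \algebra{A}_\Null$ by the second fact, and therefore $\xi = \sum_i x_i \cdot \phi_\Wobs(y_i \tensor \xi) \in \module{E}_\Wobs \cdot \algebra{A}_\Null$. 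This proves $\module{E}_\Null \subseteq \module{E}_\Wobs \cdot \algebra{A}_\Null$ and hence the claimed equality.

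I do not expect any genuine obstacle: the argument is a direct transcription of the classical dual-basis identity, and the only two points needing a moment's attention are that the compatibility \eqref{eq:MoritaEquivalenceIsoCompatibility} may be used on the $\WOBS$-level (it holds componentwise) and that a morphism of coisotropic bimodules necessarily respects the $\NULL$-components, so that $\phi_\Wobs$ lands in $\algebra{A}_\Null$ whenever it is fed something from the $\NULL$-part. Running the same computation with the roles of $\phi$ and $\psi$ interchanged gives, symmetrically, $\algebra{B}_\Null \cdot \module{E}_\Wobs = \module{E}_\Null$.
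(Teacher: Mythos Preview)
Your argument is correct and is essentially the element-wise unpacking of the paper's proof: the paper argues abstractly via the isomorphism $\algebra{A}_\Null \simeq (\module{E}'\tensor[\algebra{B}]\module{E})_\Null$ and then tensors with $\module{E}_\Wobs$ using $\psi$, whereas you choose a preimage $\sum_i x_i\tensor y_i$ of $\Unit_{\algebra{B}_\Wobs}$ under $\psi_\Wobs$ and apply the compatibility \eqref{eq:MoritaEquivalenceIsoCompatibility} to a given $\xi\in\module{E}_\Null$ directly. The underlying mechanism---$\phi_\Wobs$ maps $\module{E}'_\Wobs\tensor[\algebra{B}_\Wobs]\module{E}_\Null$ into $\algebra{A}_\Null$ and the adjoint-equivalence identity transports the unit decomposition---is the same in both proofs.
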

\begin{proof}
    Note that the inclusion
    $\module{E}_\Wobs \cdot \algebra{A}_\Null \subseteq \module{E}_0$
    holds by definition.  For the other inclusion we use
    $\algebra{A}_\Null \simeq (\module{E}^\prime \tensor[\algebra{B}]
    \module{E})_\Null = \module{E}^\prime_\Wobs
    \tensor[\algebra{B}_\Wobs] \module{E}_\Null +
    \module{E}^\prime_\Null \tensor[\algebra{B}_\Wobs]
    \module{E}_\Wobs$
    by \eqref{eq:MoritaEquivalenceIso} and the definition of the
    tensor product.  Thus we get
    \begin{equation*}
	\module{E}_\Wobs \cdot \algebra{A}_0
	\simeq
        \module{E}_\Wobs
        \tensor
        \module{E}^\prime_\Null
        \tensor
        \module{E}_\Wobs
	+
        \module{E}_\Wobs
        \tensor
        \module{E}^\prime_\Wobs
        \tensor
        \module{E}_\Null
	\simeq
        \module{E}_\Wobs
        \tensor
        \module{E}^\prime_\Null
        \tensor
        \module{E}_\Wobs
        +
	\module{E}_\Null
    \end{equation*}
    showing that $\module{E}_\Null \subseteq \module{E}_\Wobs \cdot
    \algebra{A}_\Null$.
\end{proof}

Putting these previous statements together we get a quite explicit
description of Morita equivalence bimodules for coisotropic algebras,
similar to the well-known description of Morita equivalence bimodules
for classical rings by Morita's theorems.
\begin{theorem}
    \label{theorem:MEBimoduleStructure}%
    Let $\module{E} \in \CoisoBimodTriple(\algebra{B},\algebra{A})$ be
    a Morita equivalence bimodule of coisotropic triples of algebras
    $\algebra{A}$ and $\algebra{B}$.  Then there exists an isomorphism
    of coisotropic bimodules such that
    \begin{align}
	\module{E}_\Total &\simeq \mathsf{e}\algebra{A}_\Total^n, \\
	\module{E}_\Wobs & \simeq \mathsf{e} \algebra{A}_\Wobs^n, \\
	\module{E}_\Null & \simeq \mathsf{e} \algebra{A}_\Null^n
    \end{align}
    with a projection $\mathsf{e} \in \Mat_n(\algebra{A}_\Wobs)$.
    Moreover, $\algebra{B}$ is completely determined by the right
    $\algebra{A}$-module structure of $\module{E}$.  We have
    \begin{align}
    	\algebra{B}_\Total
        &\simeq
        \End_{\algebra{A}_\Total}(\module{E}_\Total),
        \label{theorem:MEBimoduleStructure_LeftMulti_1}\\
    	\algebra{B}_\Wobs
        &\simeq
        \End_{\algebra{A}_\Wobs}(\module{E}_\Wobs),
    	\label{theorem:MEBimoduleStructure_LeftMulti_2}\\
    	\algebra{B}_\Null
        &\simeq
        \Hom_{\algebra{A}_\Wobs}(\module{E}_\Wobs, \module{E}_\Null),
    	\label{theorem:MEBimoduleStructure_LeftMulti_3}
    \end{align}
    where all isomorphisms are given by left multiplication and we
    view
    $\Hom_{\algebra{A}_\Wobs}(\module{E}_\Wobs, \module{E}_\Null)$ as
    a subset of $\End_{\algebra{A}_\Wobs}(\module{E}_\Wobs)$.
\end{theorem}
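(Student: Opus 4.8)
The statement is essentially the conjunction of Lemmas~\ref{lemma:DualBasisMoritaEquivalenceBimodule}--\ref{lemma:ZeroComponentMoritaEquivalenceBimodules} (for the structure of $\module{E}$) with the classical Morita theorem applied componentwise (for the structure of $\algebra{B}$), plus one argument that genuinely uses the coisotropic structure (for $\algebra{B}_\Null$). Concretely, I would first fix an $n$-element dual basis $\{e_j, e^j\}$ of the finitely generated projective module $\module{E}_\Wobs$ and pass to the induced dual basis of $\module{E}_\Total$ from Lemma~\ref{lemma:DualBasisMoritaEquivalenceBimodule}; combined with Lemma~\ref{lemma:SameIdempotents} these produce right-module isomorphisms $\module{E}_\Wobs \simeq \mathsf{e}\algebra{A}_\Wobs^n$ and $\module{E}_\Total \simeq \mathsf{e}\algebra{A}_\Total^n$ with one and the same idempotent $\mathsf{e} \in \Mat_n(\algebra{A}_\Wobs)$. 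Equation~\eqref{eq:DualBasisOnEWobs} says exactly that under these identifications $\iota_\module{E}$ becomes the entrywise inclusion $\mathsf{e}\algebra{A}_\Wobs^n \hookrightarrow \mathsf{e}\algebra{A}_\Total^n$, so the two isomorphisms are compatible. Finally, by Lemma~\ref{lemma:ZeroComponentMoritaEquivalenceBimodules} one has $\module{E}_\Null = \module{E}_\Wobs \cdot \algebra{A}_\Null$, which is carried to $\mathsf{e}\algebra{A}_\Wobs^n \cdot \algebra{A}_\Null = \mathsf{e}\algebra{A}_\Null^n$, the last equality because $\algebra{A}_\Null$ is a two-sided ideal in the unital algebra $\algebra{A}_\Wobs$ (``$\subseteq$'' via $(\mathsf{e}v)a = \mathsf{e}(va)$ with $va \in \algebra{A}_\Null^n$; ``$\supseteq$'' by writing a vector in $\algebra{A}_\Null^n$ as $\sum_j \mathsf{f}_j a_j$ with $\mathsf{f}_j$ the $j$-th standard basis vector of $\algebra{A}_\Wobs^n$ and $a_j \in \algebra{A}_\Null$). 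Assembling the three isomorphisms yields the claimed isomorphism of coisotropic bimodules onto $(\mathsf{e}\algebra{A}_\Total^n, \mathsf{e}\algebra{A}_\Wobs^n, \mathsf{e}\algebra{A}_\Null^n)$; the bimodule-triple axioms for the target then hold automatically, the structure being transported from $\module{E}$.

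For $\algebra{B}$, I would apply the projection functors $\functor{tot}$ and $\functor{N}$ of Remark~\ref{remark:TOTALWOBSFunctor}: they send the Morita equivalence in $\CoisoBimodTriple$ to ordinary Morita equivalences, so $\module{E}_\Total$ and $\module{E}_\Wobs$ are classical equivalence bimodules, whence the classical Morita theorem gives \eqref{theorem:MEBimoduleStructure_LeftMulti_1} and \eqref{theorem:MEBimoduleStructure_LeftMulti_2} with the isomorphisms realised by left multiplication $b \mapsto L_b$; in particular $b \mapsto L_b$ embeds $\algebra{B}_\Wobs$ into $\End_{\algebra{A}_\Wobs}(\module{E}_\Wobs)$. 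For $b \in \algebra{B}_\Null$ the defining condition $\algebra{B}_\Null \cdot \module{E}_\Wobs \subseteq \module{E}_\Null$ gives $L_b \in \Hom_{\algebra{A}_\Wobs}(\module{E}_\Wobs, \module{E}_\Null)$, so left multiplication restricts to an injection $\algebra{B}_\Null \hookrightarrow \Hom_{\algebra{A}_\Wobs}(\module{E}_\Wobs, \module{E}_\Null)$. For surjectivity, given $T = L_b$ with $T(\module{E}_\Wobs) \subseteq \module{E}_\Null$, I would use surjectivity of $\psi_\Wobs$ from \eqref{eq:MoritaEquivalenceIso} to write $\Unit_{\algebra{B}_\Wobs} = \psi_\Wobs\bigl(\sum_i x_i \tensor y_i\bigr)$ with $x_i \in \module{E}_\Wobs$ and $y_i \in \module{E}^\prime_\Wobs$; since $\psi_\Wobs$ is left $\algebra{B}_\Wobs$-linear,
\[
    b = b \cdot \Unit_{\algebra{B}_\Wobs} = \psi_\Wobs\Bigl( \sum_i (b x_i) \tensor y_i \Bigr)
\]
with each $b x_i = T(x_i) \in \module{E}_\Null$, so that $b$ lies in $\psi_\Wobs\bigl(\module{E}_\Null \tensor[\algebra{A}_\Wobs] \module{E}^\prime_\Wobs\bigr) \subseteq \psi_\Wobs\bigl((\module{E} \tensor[\algebra{A}] \module{E}^\prime)_\Null\bigr) \subseteq \algebra{B}_\Null$, the last inclusion because $\psi$ is a morphism of coisotropic bimodules. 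This proves \eqref{theorem:MEBimoduleStructure_LeftMulti_3}, and since all three isomorphisms are left multiplications the final assertion that $\algebra{B}$ is determined by the right $\algebra{A}$-module structure of $\module{E}$ follows as well.

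The one step that is not pure bookkeeping is the surjectivity onto $\Hom_{\algebra{A}_\Wobs}(\module{E}_\Wobs, \module{E}_\Null)$: it is essential there that $\psi$ is available not merely as an algebra isomorphism but as a \emph{morphism of coisotropic bimodules}, so that an element manufactured out of the $\NULL$-component of $\module{E}$ is recognised as lying in $\algebra{B}_\Null$ and not just in $\algebra{B}_\Wobs$. The remaining delicate point is in the first step: one must check that the three componentwise isomorphisms are \emph{simultaneously} compatible with $\iota_\module{E}$ and with both two-sided-ideal conditions of Definition~\ref{definition:CoisoTriplesBimodules}, so that they really do assemble into a single isomorphism in $\CoisoBimodTriple(\algebra{B},\algebra{A})$.
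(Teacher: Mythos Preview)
Your proposal is correct and follows essentially the same route as the paper: you invoke Lemmas~\ref{lemma:DualBasisMoritaEquivalenceBimodule}--\ref{lemma:ZeroComponentMoritaEquivalenceBimodules} to obtain the compatible isomorphisms $\module{E}_\bullet \simeq \mathsf{e}\algebra{A}_\bullet^n$, and for $\algebra{B}_\Null$ you use the unit decomposition $\Unit_{\algebra{B}_\Wobs} = \psi_\Wobs\bigl(\sum_i x_i \tensor y_i\bigr)$ exactly as the paper does. The only cosmetic difference is that for surjectivity onto $\Hom_{\algebra{A}_\Wobs}(\module{E}_\Wobs,\module{E}_\Null)$ the paper starts from an arbitrary $\xi$ and produces a preimage $\psi_\Wobs\bigl(\sum_i \xi(x_i)\tensor y_i\bigr)$ via the compatibility \eqref{eq:MoritaEquivalenceIsoCompatibility}, whereas you first write $\xi = L_b$ using \eqref{theorem:MEBimoduleStructure_LeftMulti_2} and then argue $b \in \algebra{B}_\Null$ via left $\algebra{B}_\Wobs$-linearity of $\psi_\Wobs$; these are two phrasings of the same computation.
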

\begin{proof}
    Fix a dual basis $\{e_j,e^j\}_{j=1,\ldots,n}$ for
    $\module{E}_\Wobs$ and consider the dual basis
    $\{e_j^\Total, e^j_\Total\}_{j = 1, \ldots, m}$ of
    $\module{E}_\Total$ as constructed in
    Lemma~\ref{lemma:DualBasisMoritaEquivalenceBimodule}.  These dual
    bases give rise to isomorphisms
    \begin{equation*}
        g_\Wobs \colon \module{E}_\Wobs
        \ni
        \sum_{i=1}^{n} e_i e^i(x)
        \longmapsto
        \sum_{i=1}^{n} b_i e^i(x)
        \in
        \mathsf{e}\algebra{A}_\Wobs^n,
    \end{equation*}
    where $b_i$ is the standard basis of $\algebra{A}_\Wobs^n$, and
    similarly
    $g_\Total \colon \module{E}_\Total \longrightarrow
    \mathsf{e}\algebra{A}_\Total^n$.
    A straightforward computation shows that
    $\iota_\algebra{A} \circ g_\Wobs = g_\Total \circ
    \iota_\module{E}$.
    The compatibility of $g_\Wobs$ with $\module{E}_0$ is clear by
    Lemma~\ref{lemma:ZeroComponentMoritaEquivalenceBimodules}.  Thus
    we get an isomorphism of coisotropic bimodules.  Moreover, since
    $\module{E}_\Total$ and $\module{E}_\Wobs$ are classical Morita
    equivalence bimodules of the $\TOTAL$- and $\WOBS$-components,
    respectively, we immediately get
    \eqref{theorem:MEBimoduleStructure_LeftMulti_1} and
    \eqref{theorem:MEBimoduleStructure_LeftMulti_2}.  For
    \eqref{theorem:MEBimoduleStructure_LeftMulti_3} we only need to
    show that
    $\image(\algebra{B}_0) = \Hom_{\algebra{A}_\Wobs}
    (\module{E}_\Wobs, \module{E}_\Null)$
    under left multiplication.  For this let
    $\xi \in \Hom_{\algebra{A}_\Wobs}(\module{E}_\Wobs,
    \module{E}_\Null)$
    and
    $\Unit_{\algebra{B}_\Wobs} = \psi_\Wobs \big( \sum_{i=1}^{k}
    x_i^{\Unit_\Wobs} \tensor y_i^{\Unit_\Wobs} \big)$
    as before.  Then
    $\psi_\Wobs(\xi(x_i^{\Unit_\Wobs}) \tensor y_i^{\Unit_\Wobs}) \in
    \algebra{B}_0$ and
    \begin{align*}
        \psi_\Wobs\Big(
        \sum_{i=1}^{k}\xi(x_i^{\Unit_\Wobs}) \tensor y_i^{\Unit_\Wobs}
        \Big) \cdot x
        =
        \sum_{i=1}^{k}
        \xi(x_i^{\Unit_\Wobs})
        \phi_\Wobs (y_i^{\Unit_\Wobs} \tensor x)
        =
        \xi\Big(
        \sum_{i=1}^{k}
        \psi_\Wobs (x_i^{\Unit_\Wobs} \tensor y_i^{\Unit_\Wobs}) x
        \Big)
        =
        \xi(x)
    \end{align*}
    shows that
    $\algebra{B}_0 \simeq \Hom_{\algebra{A}_\Wobs}(\module{E}_\Wobs,
    \module{E}_0)$.
\end{proof}

From this it directly follows that for an equivalence bimodule the map
from the $\WOBS$-component to the $\TOTAL$-component is in fact
injective.
\begin{corollary}
    \label{corollary:InclusionIsInjectiveForMEB}%
    Let $\module{E} \in \CoisoBimodTriple(\algebra{B},\algebra{A})$ be
    a Morita equivalence bimodule for $\algebra{A}, \algebra{B} \in
    \CoisoAlgTriple$.  Then $\iota_\module{E} \colon \module{E}_\Wobs
    \longrightarrow \module{E}_\Total$ is injective, i.e.
    $\module{E}_\Wobs \subseteq \module{E}_\Total$ is a submodule.
\end{corollary}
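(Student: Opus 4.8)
The plan is to read off injectivity directly from the explicit description of the equivalence bimodule obtained in Theorem~\ref{theorem:MEBimoduleStructure}, or more efficiently from the dual basis formula \eqref{eq:DualBasisOnEWobs} already established in Lemma~\ref{lemma:DualBasisMoritaEquivalenceBimodule}. Since $\iota_\module{E}$ is a $(\algebra{B}_\Wobs,\algebra{A}_\Wobs)$-bimodule morphism, hence in particular $\mathbb{k}$-linear, it suffices to show that its kernel is trivial. So I would fix a dual basis $\{e_j, e^j\}_{j=1,\ldots,n}$ of the finitely generated projective module $\module{E}_\Wobs$ together with the induced dual basis $\{e_j^\Total, e^j_\Total\}_{j=1,\ldots,n}$ of $\module{E}_\Total$ provided by Lemma~\ref{lemma:DualBasisMoritaEquivalenceBimodule}.

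Now suppose $x \in \module{E}_\Wobs$ satisfies $\iota_\module{E}(x) = 0$. Applying $e^j_\Total$ and using \eqref{eq:DualBasisOnEWobs} gives $0 = e^j_\Total(\iota_\module{E}(x)) = \iota_\algebra{A}(e^j(x))$ for every $j$. Since $\iota_\algebra{A}\colon \algebra{A}_\Wobs \longrightarrow \algebra{A}_\Total$ is the inclusion of a unital subalgebra, it is injective, so $e^j(x) = 0$ for all $j$, and therefore $x = \sum_{j=1}^{n} e_j \cdot e^j(x) = 0$ by the dual basis property. Thus $\ker \iota_\module{E} = \{0\}$, i.e. $\iota_\module{E}$ is injective and $\module{E}_\Wobs$ may be regarded as a submodule of $\module{E}_\Total$.

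An equivalent route would use the commuting square $\iota_\algebra{A} \circ g_\Wobs = g_\Total \circ \iota_\module{E}$ from the proof of Theorem~\ref{theorem:MEBimoduleStructure}: since $g_\Wobs$ is an isomorphism and $\iota_\algebra{A}$ applied componentwise $\algebra{A}_\Wobs^n \to \algebra{A}_\Total^n$ is injective (restricting to $\mathsf{e}\algebra{A}_\Wobs^n \to \mathsf{e}\algebra{A}_\Total^n$), the composite $g_\Total \circ \iota_\module{E}$ is injective, and as $g_\Total$ is an isomorphism this forces $\iota_\module{E}$ to be injective. I expect no real obstacle here; the whole content has already been extracted in Lemma~\ref{lemma:DualBasisMoritaEquivalenceBimodule} and Theorem~\ref{theorem:MEBimoduleStructure}. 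The only point worth stressing is that $\iota_\module{E}$ is \emph{not} injective for a general coisotropic bimodule — this was deliberately allowed in Definition~\ref{definition:CoisoTriplesBimodules} to keep tensor products well-behaved — so the statement genuinely uses the equivalence hypothesis through the existence of a dual basis compatible with $\iota_\module{E}$ and $\iota_\algebra{A}$.
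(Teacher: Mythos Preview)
Your proposal is correct and follows the same approach as the paper: the corollary is stated there without explicit proof, merely as an immediate consequence of Theorem~\ref{theorem:MEBimoduleStructure}, and both of your routes (via \eqref{eq:DualBasisOnEWobs} or via the commuting square $\iota_\algebra{A} \circ g_\Wobs = g_\Total \circ \iota_\module{E}$) unpack precisely that deduction. If anything, you supply more detail than the paper does.
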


\begin{remark}
    \label{remark:BadNews}%
    On the one hand, the theorem gives a complete picture of how the
    equivalence bimodules for coisotropic triples of algebras look
    like. On the other hand, it is quite bad news that the
    $\WOBS$-component controls and determines the other components of
    the bimodule. It will be the one which is the most inaccessible in
    the examples of deformation quantization.
\end{remark}
\begin{example}[Standard example]
    \label{example:StandardExample}%
    From the above characterization we obtain the first standard
    example: for a coisotropic triple $\algebra{A}$ the matrices
    \begin{equation}
        \label{eq:MatricesOfTriple}
        \Mat_n(\algebra{A})
        =
        \big(
        \Mat_n(\algebra{A}_\Total),
        \Mat_n(\algebra{A}_\Wobs),
        \Mat_n(\algebra{A}_\Null)
        \big)
    \end{equation}
    form again a coisotropic triple of algebras which is now Morita
    equivalent to $\algebra{A}$ for all $n \in \mathbb{N}$. As
    equivalence bimodule we can take
    \begin{equation}
        \label{eq:StandardAn}
        \algebra{A}^n
        =
        \big(
        \algebra{A}_\Total^n,
        \algebra{A}_\Wobs^n,
        \algebra{A}_\Null^n
        \big).
    \end{equation}
\end{example}

%
%

\section{Reduction for Bimodules}
\label{sec:ReductionBimodules}

Following the idea of constructing vector bundles on the reduced
manifold by reducing a vector bundle on the manifold we started with,
we want to turn bimodules over coisotropic algebras into bimodules
over the reduced algebras.  The idea is to proceed similarly to the
algebra case and consider the quotient
$\module{E}_\Wobs \big/ \module{E}_\Null$, see Proposition
\ref{proposition:CoisotropicTripleAlgebraReduction}.  Again this
construction uses only the information of the $\WOBS$- and
$\NULL$-components.  Therefore we only consider reduction for
bimodules over coisotropic pairs.  Reduction for bimodules over
coisotropic triples is then given by first forgetting about the
$\TOTAL$-component.  This construction indeed yields bimodules over
the reduced algebras and better still it is compatible with the
bicategory structure of $\CoisoBimodPair$ in the best possible way,
i.e. we get a functor of bicategories, called \emph{reduction
  functor}:
\begin{theorem}[Reduction in $\CoisoBimodPair$]
    \label{theorem:ReductionBicCoisoBimod}%
    A functor of bicategories
    $\reduce\colon \CoisoBimodPair \longrightarrow \Bimodules$ is
    given by the following data:
    \begin{theoremlist}
    \item \label{item:RedOnObjects} A map
        $\reduce \colon \Obj(\CoisoBimodPair)
        \longrightarrow\Obj(\Bimodules)$ on objects, given by
        \begin{equation}
            \label{eq:ReductionFunctorObjects}
            \algebra{A} \longmapsto \algebra{A}_\red.
        \end{equation}
    \item \label{item:RedOnOneTwoMorphs} For any two coisotropic pairs
        of algebras $\algebra{A}$ and $\algebra{B}$ a functor
        \begin{equation}
            \label{eq:ReductionABFunctor}
            \reduce_{\algebra{B}\algebra{A}} \colon
            \CoisoBimodPair(\algebra{B},\algebra{A})
            \longrightarrow
            \CoisoBimodPair(\algebra{B}_\red, \algebra{A}_\red),
        \end{equation}
        given by
        \begin{equation}
            \label{eq:ReductionFunctorOneMorphisms}
            \module{E}_\red
            =
            \faktor{\module{E}_\Wobs}{\module{E}_\Null}
        \end{equation}
        on objects and
        \begin{equation}
            \label{eq:ReductionFunctorTwoMorphisms}
            \Phi_\red
            \colon
            \module{E}_\red
            \ni
            [x]
            \longmapsto
            [\Phi(x)]
            \in
            \module{F}_\red
        \end{equation}
        on morphisms
        $\Phi \colon \module{E} \longrightarrow\module{F}$.
    \item \label{item:ReductionMultiplication} For any three
        coisotropic pairs of algebras $\algebra{A}$, $\algebra{B}$,
        and $\algebra{C}$ a natural isomorphism
        $\mathsf{m}_{\algebra{C}\algebra{B}\algebra{A}} \colon
        \tensor[\algebra{B}_\red] \circ
        \operatorname{(\red_{\algebra{C}\algebra{B}} \times
          \red_{\algebra{B}\algebra{A}})} \Longrightarrow
        \red_{\algebra{C}\algebra{A}} \circ \tensor[\algebra{B}]$
        given by a family of maps determined by
        \begin{equation}
            \label{eq:ReductionFunctorMultiplication}
            \mathsf{m}(\module{F},\module{E})
            \colon
            \module{F}_\red \tensor[\algebra{B}_\red] \module{E}_\red
            \ni
            [y] \tensor[] [x]
            \longmapsto
            [y \tensor x]
            \in
            (\module{F} \tensor[\algebra{B}] \module{E})_\red
        \end{equation}
        with
        $\module{F} \in \CoisoBimodPair(\algebra{C},\algebra{B})$,
        $\module{E} \in \CoisoBimodPair(\algebra{B},\algebra{A})$.
    \item \label{item:ReductionIdentity} For any coisotropic pair of
        algebras $\algebra{A}$ the identity 2-isomorphism
        \begin{equation}
            \label{eq:ReductionFunctorUnit}
            \id\colon
            \deco
            {}
            {\algebra{A}_\red}
            {{\algebra{A}_\red}}
            {}
            {\algebra{A}_\red}
            \longrightarrow
            \red_{\algebra{A}\algebra{A}}
            (\deco{}{\algebra{A}}{\algebra{A}}{}{\algebra{A}}).
        \end{equation}
    \end{theoremlist}
\end{theorem}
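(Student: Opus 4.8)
The plan is to establish the four items of data in turn, checking that each is well defined, and then to verify the two coherence axioms of a functor of bicategories from Definition~\ref{definition:FunctorBicategories}. For items~\ref{item:RedOnObjects} and~\ref{item:RedOnOneTwoMorphs}: given $\module{E} \in \CoisoBimodPair(\algebra{B},\algebra{A})$ the quotient $\module{E}_\red = \faktor{\module{E}_\Wobs}{\module{E}_\Null}$ is a $\mathbb{k}$-module since $\module{E}_\Null \subseteq \module{E}_\Wobs$ is a sub-bimodule; the left $\algebra{B}_\Wobs$- and right $\algebra{A}_\Wobs$-actions descend to it, and since $\algebra{B}_\Null \cdot \module{E}_\Wobs \subseteq \module{E}_\Null$ and $\module{E}_\Wobs \cdot \algebra{A}_\Null \subseteq \module{E}_\Null$ by Definition~\ref{definition:CoisoPairsBimodules}, these descended actions factor through $\algebra{B}_\red = \faktor{\algebra{B}_\Wobs}{\algebra{B}_\Null}$ and $\algebra{A}_\red = \faktor{\algebra{A}_\Wobs}{\algebra{A}_\Null}$, so $\module{E}_\red$ is a genuine $(\algebra{B}_\red,\algebra{A}_\red)$-bimodule. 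For a morphism $\Phi$ the condition $\Phi(\module{E}_\Null) \subseteq \module{F}_\Null$ makes $\Phi_\red([x]) = [\Phi(x)]$ well defined and a bimodule morphism along the reduced algebra maps, and functoriality $(\Psi \circ \Phi)_\red = \Psi_\red \circ \Phi_\red$, $(\id)_\red = \id$ is immediate on representatives; this extends the algebra reduction of Proposition~\ref{proposition:CoisotropicTripleAlgebraReduction}.

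\emph{The multiplicativity isomorphism (item~\ref{item:ReductionMultiplication}) is the heart of the argument.} By Lemma~\ref{lemma:TensorProductBimodules} one has $(\module{F}\tensor[\algebra{B}]\module{E})_\Wobs = \module{F}_\Wobs \tensor[\algebra{B}_\Wobs] \module{E}_\Wobs$ and $(\module{F}\tensor[\algebra{B}]\module{E})_\Null = \image(\module{F}_\Wobs \tensor[\algebra{B}_\Wobs] \module{E}_\Null) + \image(\module{F}_\Null \tensor[\algebra{B}_\Wobs] \module{E}_\Wobs)$ inside it, so $(\module{F}\tensor[\algebra{B}]\module{E})_\red$ is the quotient of $\module{F}_\Wobs \tensor[\algebra{B}_\Wobs] \module{E}_\Wobs$ by these images. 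I would prove that $[y]\tensor[x] \mapsto [y\tensor x]$ is a well-defined isomorphism onto $\module{F}_\red \tensor[\algebra{B}_\red] \module{E}_\red$ by deducing it from the base-change statement: for a ring $R$ with two-sided ideal $I$, a right $R$-module $M$ with submodule $M'$ and $MI \subseteq M'$, and a left $R$-module $N$ with submodule $N'$ and $IN \subseteq N'$, the natural maps induce
\[
    \faktor{M}{M'} \otimes_{R/I} \faktor{N}{N'}
    \;\cong\;
    \faktor{M \otimes_R N}{\bigl(\image(M' \otimes_R N) + \image(M \otimes_R N')\bigr)} .
\]
One inverse is induced by the $R$-balanced map $(m,n) \mapsto [m]\otimes[n]$ into $\faktor{M}{M'} \otimes_{R/I} \faktor{N}{N'}$, well defined because the $R$-actions on the two quotients factor through $R/I$; the other by the $R/I$-balanced map $([m],[n]) \mapsto [m\otimes n]$, well defined because $M'\otimes N$ and $M \otimes N'$ land in the denominator. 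Applied with $R = \algebra{B}_\Wobs$, $I = \algebra{B}_\Null$, $M = \module{F}_\Wobs$, $M' = \module{F}_\Null$, $N = \module{E}_\Wobs$, $N' = \module{E}_\Null$ — whose hypotheses are exactly the defining inclusions of a coisotropic bimodule — this yields that $\mathsf{m}(\module{F},\module{E})$ is a bimodule isomorphism over $\algebra{C}_\red$ and $\algebra{A}_\red$; naturality of $\mathsf{m}$ in both arguments is a one-line check on elementary tensors of classes.

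\emph{The identity constraint and the coherences.} Viewing $\algebra{A}$ as its own identity $1$-morphism, its $\WOBS$- and $\NULL$-components are $\algebra{A}_\Wobs$ and $\algebra{A}_\Null$, so $\reduce$ of it is literally the regular bimodule $\faktor{\algebra{A}_\Wobs}{\algebra{A}_\Null} = \algebra{A}_\red$, and the identity map serves as the required $2$-isomorphism (item~\ref{item:ReductionIdentity}). It then remains to check the associativity and unit coherence diagrams of Definition~\ref{definition:FunctorBicategories}. The key simplification — used exactly as in the proof of Theorem~\ref{theorem:BicategoryModulesCoisotropicAlgebras} — is that by Lemmas~\ref{lemma:AssociativityTensorProduct} and~\ref{lemma:IdentitiesTensorProduct} the associativity and unit constraints of $\CoisoBimodPair$ are on the $\WOBS$-component exactly those of ordinary bimodules, as are those of $\Bimodules$, while $\mathsf{m}$ is nothing but the canonical re-bracketing $([z]\tensor[y])\tensor[x] \leftrightarrow [(z\tensor y)\tensor x]$ of classes. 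Hence, evaluated on factorizing tensors, both coherence diagrams collapse to the defining re-bracketing identities for the canonical associativity and unit isomorphisms of ordinary bimodules over $\algebra{A}_\red,\algebra{B}_\red,\algebra{C}_\red$ and therefore commute.

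\emph{Main obstacle.} I expect the only genuinely non-formal point to be the base-change isomorphism underlying $\mathsf{m}$: one must respect the subtlety, already flagged after Lemma~\ref{lemma:TensorProductBimodules}, that $\module{F}_\Wobs \tensor[\algebra{B}_\Wobs] \module{E}_\Null$ is not literally a submodule of $\module{F}_\Wobs \tensor[\algebra{B}_\Wobs] \module{E}_\Wobs$ but only its image, so the identification of $(\module{F}\tensor[\algebra{B}]\module{E})_\red$ with $\module{F}_\red \tensor[\algebra{B}_\red] \module{E}_\red$ has to be obtained from universal properties of the tensor product and the quotients rather than by naive manipulation of submodules. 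Everything else is a routine, if somewhat lengthy, verification on representatives and elementary tensors.
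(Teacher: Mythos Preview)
Your proposal is correct and follows essentially the same route as the paper: verify well-definedness of the reduced bimodule and reduced morphisms from the defining inclusions of a coisotropic bimodule, identify the multiplicativity constraint $\mathsf{m}$ with the standard base-change isomorphism (the paper calls this ``a standard computation'' and, like you, flags that the key input is the specific form $(\module{F}\tensor[\algebra{B}]\module{E})_\Null = \module{F}_\Wobs \tensor \module{E}_\Null + \module{F}_\Null \tensor \module{E}_\Wobs$), and observe that the identity constraint is literally the identity. Your treatment is in fact more explicit than the paper's, which neither spells out the base-change argument nor the coherence checks; your handling of the image-versus-submodule subtlety is exactly the point one has to be careful about.
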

\begin{proof}
    First note that \eqref{eq:ReductionFunctorObjects} is well-defined
    since $\algebra{A}_\Null$ is a two-sided ideal in
    $\algebra{A}_\Wobs$.  Furthermore,
    \eqref{eq:ReductionFunctorOneMorphisms} gives a well-defined
    $(\algebra{B}_\red, \algebra{A}_\red)$-bimodule by the definition
    of modules over coisotropic algebras, and
    \eqref{eq:ReductionFunctorTwoMorphisms} is well-defined since
    morphisms of modules over coisotropic algebras preserve the
    submodules.  It is a standard computation to check that
    \eqref{eq:ReductionFunctorMultiplication} is a well-defined
    natural isomorphism.  Note, that here we crucially need that
    $(\module{F} \tensor[\algebra{B}] \module{E})_\Null =
    \module{F}_{\Wobs} \tensor[\algebra{B}_\Wobs] \module{E}_{\Null} +
    \module{F}_{\Null} \tensor[\algebra{B}_\Wobs] \module{E}_\Wobs$.
    For the last part it is clear that
    $\red_{\algebra{A}\algebra{A}}
    (\deco{}{\algebra{A}}{\algebra{A}}{}{\algebra{A}}) =
    \algebra{A}_\Wobs\big/\algebra{A}_\Null =
    \deco{}{\algebra{A}_\red}{{\algebra{A}_\red}}{}{\algebra{A}_\red}$.
\end{proof}

This reduction functor for $\CoisoBimodPair$ is also compatible with
the reduction in $\CoisoAlgPair$ in the sense that
\begin{equation}
    \begin{tikzcd}[column sep=large, row sep = large]
	\CoisoAlgPair
        \arrow[hookrightarrow]{r}{\functor{L}}
        \arrow{d}[swap]{\reduce}
	&\CoisoBimodPair
        \arrow{d}{\reduce} \\
	\Algebras
        \arrow[hookrightarrow]{r}{}
	&\Bimodules
    \end{tikzcd}
\end{equation}
commutes.  Indeed, on coisotropic algebras both reduction functors are
defined the same and for a morphism
$\phi \in \CoisoAlgPair(\algebra{B},\algebra{A})$ we clearly have
$\algebra{B}_\red^{\phi_\red} = (\algebra{B}^\phi)_\red$ as sets and
the $(\algebra{B}_\red,\algebra{A}_\red)$-bimodule structures also
coincide.

Moreover, by identifying isomorphic coisotropic bimodules we can
construct the classifying categories $[\CoisoBimodPair]$ and
$[\Bimodules]$.  Since $\red \colon \CoisoBimodPair \longrightarrow\Bimodules$
is a functor of bicategories we get a well-defined functor
$\red \colon [\CoisoBimodPair] \longrightarrow[\Bimodules]$, such that
\begin{equation}
    \begin{tikzcd}
	\CoisoAlgPair
        \arrow[hookrightarrow]{r}{\functor{L}}
        \arrow{d}[swap]{\reduce}
	&\CoisoBimodPair
        \arrow{d}{\reduce}
        \arrow{r}{[ \argument ]}
	&[] [\CoisoBimodPair]
        \arrow{d}{\reduce}\\
	\Algebras
        \arrow[hookrightarrow]{r}{}
	&\Bimodules
        \arrow{r}{[ \argument ]}
	&[] [\Bimodules]
    \end{tikzcd}
\end{equation}
commutes.

Also, $\red$ maps invertible morphisms to invertible
morphisms, thus it restricts to a functor
\begin{equation}
    \label{eq:ReductionOnPicardGroupoids}
    \red \colon
    \Pic(\CoisoBimodPair) \longrightarrow \Pic(\Bimodules)
\end{equation}
between the corresponding Picard bigroupoids.  Similarly, we get a
functor
$\red \colon [\Pic(\CoisoBimodPair)] \longrightarrow [\Pic(\Bimodules)]$
between the Picard groupoids, leading to the commutative diagram
\begin{equation}
    \begin{tikzcd}
	\Pic(\CoisoBimodPair)
        \arrow{d}[swap]{\reduce}
        \arrow{r}{[ \argument ]}
	&[] [\Pic(\CoisoBimodPair)]
        \arrow{d}{\reduce}\\
	\Pic(\Bimodules)
        \arrow{r}{[ \argument ]}
	&[] [\Pic(\Bimodules)]
    \end{tikzcd}
\end{equation}
This means that reduction of coisotropic algebras preserves Morita
equivalence.

%
%

\section{Classical Limit}
\label{sec:ClassicalLimit}

In formal deformation quantization one is interested in algebras
of formal power series over
a ring $\ring{R}[[\lambda]]$,
e.g. $(\Cinfty(M)[[\lambda]], \star)$ as algebra over
$\mathbb{C}[[\lambda]]$ for a Poisson manifold $M$.
Given such an
$\ring{R}[[\lambda]]$-algebra $\qalgebra{A}$ we can construct an
$\ring{R}$-algebra, called the \emph{classical limit}, by taking the
quotient $\cl(\qalgebra{A}) = \qalgebra{A} \big/ \lambda \qalgebra{A}$.
The crucial property of $\cl$ is now that all multiples of $\lambda$
will vanish, i.e. we have $\cl(\lambda a) = 0$ for all
$a \in \qalgebra{A}$.

In the following we set the underlying ring as subscript for all
involved categories in order to distinguish coisotropic triples and
pairs of algebras over $\ring{R}[[\lambda]]$ from the ones over
$\ring{R}$.

In order to define a classical limit for a coisotropic triple
$\qalgebra{A} \in \CoisoAlgTriple_{\ring{R}[[\lambda]]}$ we can not
simply set $\cl(\qalgebra{A})_\Wobs = \cl(\qalgebra{A}_\Wobs)$, since
this would not be a subset of $\cl(\qalgebra{A}_\Total)$
directly. Instead, we have to take its image in the classical limit of
the $\TOTAL$-component, leading to the following definition for the
classical limit of a coisotropic triple:
\begin{definition}[Classical limit of coisotropic triples]
    \label{definition:ClassicalLimitCoisotropicTriples}%
    Let $\qalgebra{A}$ be a coisotropic triple over
    $\ring{R}[[\lambda]]$.  Then the coisotropic triple
    \begin{align}
        \cl(\qalgebra{A})_\Total
        &=
        \cl(\qalgebra{A}_\Total) \\
        \cl(\qalgebra{A})_\Wobs
        &=
        \faktor{\qalgebra{A}_\Wobs}
	    {(\lambda \qalgebra{A}_\Total \cap \qalgebra{A}_\Wobs)}
        \subseteq
        \cl(\qalgebra{A})_\Total \\
        \cl(\qalgebra{A})_\Null
        &=
        \faktor{\qalgebra{A}_\Null}
        {(\lambda \qalgebra{A}_\Total \cap \qalgebra{A}_\Null)}
        \subseteq
        \cl(\qalgebra{A})_\Wobs
    \end{align}
    is called the \emph{classical limit} of $\qalgebra{A}$.
\end{definition}

Note that $\cl(\qalgebra{A})_0$ is indeed a two-sided ideal in
$\cl(\qalgebra{A})_\Wobs$.
In addition to the classical limit of deformed coisotropic triples we
can also take the classical limit of morphisms of coisotropic triples.
We define for a
morphism $\deform{T}\colon \qalgebra{A} \longrightarrow\qalgebra{B}$ of
coisotropic triples the classical limit
$\cl(\deform{T}) \colon \cl(\qalgebra{A})
\longrightarrow\cl(\qalgebra{B})$
by setting
$\cl(\deform{T})(\cl(\deform{a})) = \cl(\deform{T}(\deform{a}))$ for
$\deform{a} \in \qalgebra{A}_\Total$.  This is just the map defined on
the quotient since every morphism
$\deform{T} \colon \qalgebra{A} \longrightarrow\qalgebra{B}$ maps
$\lambda \qalgebra{A}_\Total$ to $\lambda \qalgebra{B}_\Total$ by
$\ring{R}[[\lambda]]$-linearity.

Let us now check that the classical limit gives a functor from
$\CoisoAlgTriple_{\ring{R}[[\lambda]]}$ to
$\CoisoAlgTriple_{\ring{R}}$.
\begin{proposition}[Classical limit functor of coisotropic triples]
    \label{prop:ClassicalLimitFunctorCoisoAlgTriple}%
    The classical limit
    \begin{equation}
        \label{eq:ClassicalLimitAlgebraTriples}
        \cl\colon
        \CoisoAlgTriple_{\ring{R}[[\lambda]]}
        \longrightarrow
        \CoisoAlgTriple_{\ring{R}}
    \end{equation}
    given by the classical limit of coisotropic triples on objects and
    quotient maps on morphisms is a functor.
\end{proposition}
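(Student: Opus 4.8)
The plan is to verify, in the following order: first, that $\cl(\qalgebra{A})$ as given in Definition~\ref{definition:ClassicalLimitCoisotropicTriples} is genuinely an object of $\CoisoAlgTriple_{\ring{R}}$; second, that the prescription $\cl(\deform{T})([a]) = [\deform{T}(a)]$ sends a morphism $\deform{T}$ of coisotropic triples to a morphism of coisotropic triples; and third, that this assignment preserves identities and composition.

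For the object part I would start from the quotient map $q\colon \qalgebra{A}_\Total \to \cl(\qalgebra{A})_\Total = \qalgebra{A}_\Total/\lambda\qalgebra{A}_\Total$ and note that its restriction to $\qalgebra{A}_\Wobs$ has kernel exactly $\lambda\qalgebra{A}_\Total \cap \qalgebra{A}_\Wobs$, hence induces an injective unital algebra homomorphism $\cl(\qalgebra{A})_\Wobs \hookrightarrow \cl(\qalgebra{A})_\Total$; since $\qalgebra{A}_\Wobs$ and $\qalgebra{A}_\Total$ share the same unit, its image is a unital subalgebra. The same argument with $\qalgebra{A}_\Null$ in place of $\qalgebra{A}_\Wobs$, combined with the elementary identity $\lambda\qalgebra{A}_\Total \cap \qalgebra{A}_\Null = (\lambda\qalgebra{A}_\Total \cap \qalgebra{A}_\Wobs) \cap \qalgebra{A}_\Null$ (which holds because $\qalgebra{A}_\Null \subseteq \qalgebra{A}_\Wobs$), exhibits $\cl(\qalgebra{A})_\Null$ as a submodule of $\cl(\qalgebra{A})_\Wobs$. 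The left-ideal property of $\cl(\qalgebra{A})_\Null$ in $\cl(\qalgebra{A})_\Total$ and its two-sided-ideal property in $\cl(\qalgebra{A})_\Wobs$ then follow by computing products on representatives, using that $\qalgebra{A}_\Null$ is a left ideal in $\qalgebra{A}_\Total$ and a two-sided ideal in $\qalgebra{A}_\Wobs$; this is precisely what the remark following the definition asserts, and I would make it explicit here.

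For the morphism part, the key observation is that $\deform{T}\colon \qalgebra{A}_\Total \to \qalgebra{B}_\Total$ is $\ring{R}[[\lambda]]$-linear, so $\deform{T}(\lambda\qalgebra{A}_\Total) \subseteq \lambda\qalgebra{B}_\Total$, and hence $\cl(\deform{T})$ descends to a well-defined unital algebra homomorphism $\cl(\qalgebra{A})_\Total \to \cl(\qalgebra{B})_\Total$. I then check that it carries components into components: if $a \in \qalgebra{A}_\Wobs$ then $\deform{T}(a) \in \qalgebra{B}_\Wobs$, and if $a - a' \in \lambda\qalgebra{A}_\Total \cap \qalgebra{A}_\Wobs$ then $\deform{T}(a - a') \in \lambda\qalgebra{B}_\Total \cap \qalgebra{B}_\Wobs$, so the class of $\deform{T}(a)$ in $\cl(\qalgebra{B})_\Wobs$ is independent of the representative and agrees with the restriction of $\cl(\deform{T})$ coming from the $\TOTAL$-component; the same argument applies to the $\NULL$-component. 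Functoriality is then immediate on representatives: $\cl(\id_{\qalgebra{A}}) = \id_{\cl(\qalgebra{A})}$ and $\cl(\deform{S} \circ \deform{T})([a]) = [\deform{S}(\deform{T}(a))] = \cl(\deform{S})(\cl(\deform{T})([a]))$.

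None of these steps is hard; the one point that deserves care — the ``main obstacle'' in name only — is the bookkeeping forced by the fact that the $\WOBS$- and $\NULL$-components of the classical limit are defined by quotienting by the intersections $\lambda\qalgebra{A}_\Total \cap \qalgebra{A}_\Wobs$ and $\lambda\qalgebra{A}_\Total \cap \qalgebra{A}_\Null$ rather than by $\lambda\qalgebra{A}_\Wobs$ and $\lambda\qalgebra{A}_\Null$. This choice is exactly what makes the three pieces embed compatibly into $\cl(\qalgebra{A})_\Total$ and what makes the restriction of $\cl(\deform{T})$ to each component consistent with its definition on the $\TOTAL$-component, so I would flag it explicitly to show why the naive componentwise classical limit would fail.
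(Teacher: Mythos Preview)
Your proposal is correct and follows essentially the same approach as the paper, only with considerably more detail: the paper's proof simply asserts that $\cl(\qalgebra{A})$, $\cl(\qalgebra{B})$, and $\cl(\deform{T})$ are coisotropic triples and morphisms (relying on the remarks preceding the proposition), observes $\cl(\id_{\qalgebra{A}}) = \id_{\cl(\qalgebra{A})}$, and then verifies composition on elements exactly as you do. Your additional care with the intersections $\lambda\qalgebra{A}_\Total \cap \qalgebra{A}_\Wobs$ and $\lambda\qalgebra{A}_\Total \cap \qalgebra{A}_\Null$ is a welcome expansion of what the paper leaves implicit.
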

\begin{proof}
    First we know that for coisotropic
    $\qalgebra{A},\qalgebra{B} \in
    \CoisoAlgTriple_{\ring{R}[[\lambda]]}$
    and a morphism
    $\deform{T} \colon \qalgebra{A} \longrightarrow\qalgebra{B}$ the
    classical limits $\cl(\qalgebra{A})$, $\cl(\qalgebra{B})$ and
    $\cl(\deform{T})$ are again coisotropic algebras and morphisms,
    respectively.  Moreover, it is clear that
    $\cl(\id_{\qalgebra{A}}) = \id_{\cl(\qalgebra{A})}$.  Let in
    addition $\qalgebra{C} \in \CoisoAlgTriple_{\ring{R}}$ be another
    coisotropic triple and
    $\deform{S} \colon \qalgebra{B} \longrightarrow \qalgebra{C}$ a
    morphism, then
    \begin{equation*}
        \cl(\deform{S} \circ \deform{T})\big(\cl(\deform{a})\big)
        =
        \cl\big((\deform{S} \circ \deform{T})(\deform{a})\big)
        =
        \cl(\deform{S})\big(\cl(\deform{T}(a))\big)
        =
        \cl(\deform{S})\big(\cl(\deform{T})(\cl(\deform{a}))\big)
        =
        \big(\cl(\deform{S}) \circ \cl(\deform{T})\big)\big(\cl(\deform{a})\big)
    \end{equation*}
    for $\deform{a} \in \qalgebra{A}$, shows that the classical limit
    is functorial.
\end{proof}

By viewing $\CoisoAlgPair_{\ring{R}[[\lambda]]}$ as a subcategory of
$\CoisoAlgTriple_{\ring{R}[[\lambda]]}$ we can also define a classical
limit functor
$\cl \colon \CoisoAlgPair_{\ring{R}[[\lambda]]} \longrightarrow
\CoisoAlgPair_\ring{R}$.

Now let us check if this classical limit is compatible with the
reduction functor for coisotropic algebras.  Since reduction of
coisotropic triples is given by forgetting the $\TOTAL$-part and
subsequent reduction of coisotropic pairs we only consider pairs from
the start.  Thus we want to clarify if the diagram
\begin{equation}
    \label{eq:RedClassicalLimitDiagram}
    \begin{tikzcd}
	\CoisoAlgPair_{\ring{R}[[\lambda]]}
        \arrow{r}{\cl}
        \arrow{d}[swap]{\reduce}
	& \CoisoAlgPair_{\ring{R}}
        \arrow{d}{\reduce} \\
	\Algebras_{\ring{R}[[\lambda]]}
        \arrow{r}{\cl}
	& \Algebras_{\ring{R}}
    \end{tikzcd}
\end{equation}
commutes.  Recall, that commutativity of a diagram of categories and
functors means that all possible compositions between the same start
and end points are the same up to natural isomorphisms.
\begin{proposition}
    \label{prop:ClassicalLimitCommutesWithReductionAlgebras}%
    There exists a natural isomorphism
    $\eta \colon (\cl \circ \red) \Longrightarrow (\red \circ \cl)$
    given by
    \begin{equation}
        \label{eq:ClassicalLimitCommutesWithReductionAlgebras}
        \eta_{\qalgebra{A}}
        \colon
        \cl(\qalgebra{A}_\red)
        \ni
        \cl([\deform{a}])
        \longmapsto
        [\cl(\deform{a})]
        \in
        \cl(\qalgebra{A})_\red
    \end{equation}
    for $\qalgebra{A} \in \CoisoAlgPair_{\ring{R}[[\lambda]]}$.
\end{proposition}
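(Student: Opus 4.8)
The plan is to recognise both composites, evaluated at a coisotropic pair $\qalgebra{A} = (\qalgebra{A}_\Wobs, \qalgebra{A}_\Null)$ over $\ring{R}[[\lambda]]$, as quotients of the single algebra $\qalgebra{A}_\Wobs$ by one and the same additive subgroup, so that $\eta_{\qalgebra{A}}$ becomes the tautological comparison map. On the reduction-then-limit side we have $\cl(\qalgebra{A}_\red) = (\qalgebra{A}_\Wobs / \qalgebra{A}_\Null)\big/\lambda(\qalgebra{A}_\Wobs/\qalgebra{A}_\Null)$, and since $\lambda(\qalgebra{A}_\Wobs/\qalgebra{A}_\Null) = (\lambda\qalgebra{A}_\Wobs + \qalgebra{A}_\Null)\big/\qalgebra{A}_\Null$, composing the two quotient projections gives a canonical isomorphism $\cl(\qalgebra{A}_\red) \cong \qalgebra{A}_\Wobs\big/(\lambda\qalgebra{A}_\Wobs + \qalgebra{A}_\Null)$. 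On the limit-then-reduction side, reading the pair as the triple $(\qalgebra{A}_\Wobs, \qalgebra{A}_\Wobs, \qalgebra{A}_\Null)$ in Definition~\ref{definition:ClassicalLimitCoisotropicTriples}, the intersection $\lambda\qalgebra{A}_\Total \cap \qalgebra{A}_\Wobs$ collapses to $\lambda\qalgebra{A}_\Wobs$, so $\cl(\qalgebra{A})_\Wobs = \qalgebra{A}_\Wobs/\lambda\qalgebra{A}_\Wobs$ and $\cl(\qalgebra{A})_\Null = (\qalgebra{A}_\Null + \lambda\qalgebra{A}_\Wobs)\big/\lambda\qalgebra{A}_\Wobs$ as a submodule of it; hence $\cl(\qalgebra{A})_\red = \cl(\qalgebra{A})_\Wobs/\cl(\qalgebra{A})_\Null$ is again canonically $\qalgebra{A}_\Wobs\big/(\lambda\qalgebra{A}_\Wobs + \qalgebra{A}_\Null)$. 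Under these two identifications $\eta_{\qalgebra{A}}$ sends the class of $\deform{a}$ to the class of $\deform{a}$, so it is visibly a unital algebra isomorphism.

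Spelled out without invoking the identifications, I would verify the claimed map \eqref{eq:ClassicalLimitCommutesWithReductionAlgebras} is well-defined and bijective by hand. If $\cl([\deform{a}]) = \cl([\deform{b}])$ with $\deform{a}, \deform{b} \in \qalgebra{A}_\Wobs$, then $\deform{a} - \deform{b} \in \lambda\qalgebra{A}_\Wobs + \qalgebra{A}_\Null$; the summand in $\lambda\qalgebra{A}_\Wobs$ is killed by $\cl$, while the summand in $\qalgebra{A}_\Null$ maps into $\cl(\qalgebra{A})_\Null$, so $[\cl(\deform{a})] = [\cl(\deform{b})]$ and $\eta_{\qalgebra{A}}$ is well-defined. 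Being induced on representatives by $\id_{\qalgebra{A}_\Wobs}$, it is a unital algebra homomorphism. It is surjective because $\cl(\qalgebra{A})_\red$ is itself a quotient of $\qalgebra{A}_\Wobs$, and injective because $[\cl(\deform{a})] = 0$ forces $\cl(\deform{a}) \in \cl(\qalgebra{A})_\Null$, hence $\deform{a} \in \qalgebra{A}_\Null + \lambda\qalgebra{A}_\Wobs$, whence $[\deform{a}] \in \lambda\qalgebra{A}_\red$ and $\cl([\deform{a}]) = 0$.

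For naturality I would take a morphism $\deform{T}\colon \qalgebra{A} \longrightarrow \qalgebra{B}$ of coisotropic pairs and chase a representative $\deform{a} \in \qalgebra{A}_\Wobs$ around the square whose horizontal arrows are $\eta_{\qalgebra{A}}, \eta_{\qalgebra{B}}$ and whose vertical arrows are induced by $\deform{T}$: either route sends $\deform{a}$ to the class of $\cl(\deform{T}(\deform{a}))$, using $\Phi_\red[x] = [\Phi(x)]$, the identity $\cl(\deform{T})(\cl(\deform{a})) = \cl(\deform{T}(\deform{a}))$, the $\ring{R}[[\lambda]]$-linearity of $\deform{T}$ (so that it respects the subgroups $\lambda(\argument)$ and commutes with $\cl$), and $\deform{T}(\qalgebra{A}_\Null) \subseteq \qalgebra{B}_\Null$ (so that it descends to the reduced modules at each stage). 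There is no real obstacle: the proposition is pure bookkeeping of nested quotients, and the single substantive point — the one I would make sure to state cleanly — is the collapse $\lambda\qalgebra{A}_\Total \cap \qalgebra{A}_\Wobs = \lambda\qalgebra{A}_\Wobs$ for a pair viewed as a triple, since this is exactly what makes the two iterated quotients equal on the nose rather than merely abstractly isomorphic.
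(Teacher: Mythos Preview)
Your proof is correct and follows essentially the same route as the paper, which simply declares the well-definedness of $\eta_{\qalgebra{A}}$ and its explicit inverse $[\cl(\deform{a})] \mapsto \cl([\deform{a}])$ to be easy computations and checks naturality by evaluating on elements. Your version is more explicit and adds the clarifying observation that both composites are canonically $\qalgebra{A}_\Wobs/(\lambda\qalgebra{A}_\Wobs + \qalgebra{A}_\Null)$, which the paper does not spell out; the only minor difference is that you establish bijectivity via injectivity and surjectivity rather than by exhibiting the inverse directly.
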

\begin{proof}
    First, an easy computation shows that $\eta_{\qalgebra{A}}$ is
    well-defined for
    $\qalgebra{A} \in \CoisoAlgPair_{\ring{R}[[\lambda]]}$.
    Similarly one can show that
    \begin{equation*}
        \eta_{\qalgebra{A}}^{-1}
        \colon \cl(\qalgebra{A})_\red
        \ni [\cl(\deform{a})]
        \longmapsto \cl([\deform{a}])
        \in \cl(\qalgebra{A}_\red)
    \end{equation*}
    is well-defined and is an inverse of $\eta_{\qalgebra{A}}$.
    Therefore we have a family of algebra isomorphisms.  Finally for
    $\qalgebra{B} \in \CoisoAlgPair_{\ring{R}[[\lambda]]}$ and
    $\deform{T} \colon \qalgebra{A} \longrightarrow\qalgebra{B}$ we
    have
    $(\red \circ \cl)(\deform{T}) \circ \eta_{\qalgebra{A}} =
    \eta_{\qalgebra{B}} \circ (\cl \circ \red)(\deform{T})$,
    as a simple evaluation on elements shows.  Thus $\eta$ is a
    natural isomorphism.
\end{proof}

Since we are interested in Morita equivalence of deformed coisotropic
algebras and the relation to the classical limit we also need to
define a classical limit for modules over coisotropic algebras.  For a
module $\qmodule{E}$ over $\ring{R}[[\lambda]]$ we define the
classical limit by
$\cl(\qmodule{E}) = \qmodule{E}\big/\lambda \qmodule{E}$, in analogy to
the case of algebras over $\ring{R}[[\lambda]]$.  This yields a
functor of bicategories
$\cl\colon \CoisoBimodTriple_{\ring{R}[[\lambda]]} \longrightarrow
\CoisoBimodTriple_\ring{R}$.
The following two lemmas will be needed to proof this.
\begin{lemma}
    \label{lemma:ClassicalLimitForCoisotropicBimodules}%
    Let $\qalgebra{A}, \qalgebra{B} \in
    \CoisoAlgTriple_{\ring{R}[[\lambda]]}$ be coisotropic algebras.
    Then the classical limit yields a functor
    \begin{equation}
        \cl
        \colon
        \CoisoBimodTriple_{\ring{R}[[\lambda]]}(\qalgebra{B}, \qalgebra{A})
        \to
        \CoisoBimodTriple_{\ring{R}}(\cl(\qalgebra{B}), \cl(\qalgebra{A}))
    \end{equation}
    given by
    \begin{align}
        \label{eq:ClassicalLimitTotal}
        \cl (\qmodule{E})_\Total
        &=
        \cl(\qmodule{E}_\Total)
        =
        \faktor{\qmodule{E}_\Total}{\lambda \qmodule{E}_\Total} \\
        \label{eq:ClassicalLimitWobs}
        \cl(\qmodule{E})_\Wobs
        &=
        \cl(\qmodule{E}_\Wobs)
        =
        \faktor{\qmodule{E}_\Wobs}{\lambda \qmodule{E}_\Wobs} \\
        \label{eq:ClassicalLimitNull}
        \cl(\qmodule{E})_\Null
        &=
        \faktor{\qmodule{E}_\Null}
		{(\lambda \qmodule{E}_\Wobs \cap \qmodule{E}_\Null)}
        \subseteq
        \cl(\qmodule{E})_\Wobs
    \end{align}
    for objects
    $\qmodule{E} \in
    \CoisoBimodTriple_{\ring{R}[[\lambda]]}(\qalgebra{B},
    \qalgebra{A})$, and by the usual map on quotients for morphisms.
\end{lemma}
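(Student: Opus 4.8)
The plan is to check in turn that the prescribed assignment sends each object $\qmodule{E}$ to a genuine object of $\CoisoBimodTriple_{\ring{R}}(\cl(\qalgebra{B}),\cl(\qalgebra{A}))$, that it sends morphisms to morphisms, and that it respects identities and composition. Throughout I would use that $\cl$ is additive and right exact, being $\argument\otimes_{\ring{R}[[\lambda]]}\ring{R}$ on modules, so that it turns surjections into surjections, preserves finite sums of submodules, and commutes with the formation of quotients.

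For the object part, the $\TOTAL$-component is immediate: $\cl(\qmodule{E}_\Total)$ is canonically a $(\cl(\qalgebra{B}_\Total),\cl(\qalgebra{A}_\Total))$-bimodule and $\cl(\qalgebra{B}_\Total)=\cl(\qalgebra{B})_\Total$ by Definition~\ref{definition:ClassicalLimitCoisotropicTriples}. For the $\WOBS$- and $\NULL$-components, $\cl(\qmodule{E}_\Wobs)$ and the prescribed quotient are first of all $(\cl(\qalgebra{B}_\Wobs),\cl(\qalgebra{A}_\Wobs))$-bimodules; the substantive point is that these actions descend along the canonical surjections $\cl(\qalgebra{B}_\Wobs)\longrightarrow\cl(\qalgebra{B})_\Wobs$ and $\cl(\qalgebra{A}_\Wobs)\longrightarrow\cl(\qalgebra{A})_\Wobs$, i.e.\ that the residue class of $b\cdot x$ in the classical limit does not change when $b\in\qalgebra{B}_\Wobs$ is altered by an element of $\lambda\qalgebra{B}_\Total\cap\qalgebra{B}_\Wobs$, and symmetrically on the right for $\qalgebra{A}$. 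This is where one uses that $\iota_{\qmodule{E}}\colon\qmodule{E}_\Wobs\to\qmodule{E}_\Total$ is a bimodule morphism along the inclusions: writing such a $b$ as $b=\lambda b'$ with $b'\in\qalgebra{B}_\Total$ gives $\iota_{\qmodule{E}}(b\cdot x)=\lambda\bigl(b'\cdot\iota_{\qmodule{E}}(x)\bigr)\in\lambda\qmodule{E}_\Total$, so $b\cdot x$ becomes trivial after passing to $\cl(\qmodule{E}_\Total)$. I expect this descent to be the main obstacle, since $\iota_{\qmodule{E}}$ need not be injective and one has to keep careful track of which $\lambda$-submodule one is working modulo; it is also essentially the only place where the coisotropic bimodule structure, rather than formal manipulation of quotients, enters.

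Granting this, the remaining conditions of Definition~\ref{definition:CoisoTriplesBimodules} follow by applying $\cl$ to the corresponding statements for $\qmodule{E}$. The canonical map $\cl(\qmodule{E})_\Null\to\cl(\qmodule{E})_\Wobs$ is injective with image the sub-bimodule $(\qmodule{E}_\Null+\lambda\qmodule{E}_\Wobs)/\lambda\qmodule{E}_\Wobs$ by the isomorphism theorems, so $\cl(\qmodule{E})_\Null$ is a sub-bimodule of $\cl(\qmodule{E})_\Wobs$; and the inclusions $\cl(\qalgebra{B})_\Null\cdot\cl(\qmodule{E})_\Wobs\subseteq\cl(\qmodule{E})_\Null$ and $\cl(\qmodule{E})_\Wobs\cdot\cl(\qalgebra{A})_\Null\subseteq\cl(\qmodule{E})_\Null$ are obtained from $\qalgebra{B}_\Null\cdot\qmodule{E}_\Wobs\subseteq\qmodule{E}_\Null$ and $\qmodule{E}_\Wobs\cdot\qalgebra{A}_\Null\subseteq\qmodule{E}_\Null$ by evaluating on representatives. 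Finally, $\iota_{\cl(\qmodule{E})}:=\cl(\iota_{\qmodule{E}})$ is well defined because $\iota_{\qmodule{E}}(\lambda\qmodule{E}_\Wobs)\subseteq\lambda\qmodule{E}_\Total$, and it is a bimodule morphism along $\cl(\qalgebra{B})_\Wobs\subseteq\cl(\qalgebra{B})_\Total$ and $\cl(\qalgebra{A})_\Wobs\subseteq\cl(\qalgebra{A})_\Total$ because $\iota_{\qmodule{E}}$ was before the limit.

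For a morphism $\Phi=(\Phi_\Total,\Phi_\Wobs)$ of coisotropic bimodules I would set $\cl(\Phi):=(\cl(\Phi_\Total),\cl(\Phi_\Wobs))$, the maps induced on quotients, which are well defined since $\ring{R}[[\lambda]]$-linearity forces $\Phi_\Total(\lambda\qmodule{E}_\Total)\subseteq\lambda\qmodule{F}_\Total$ and $\Phi_\Wobs(\lambda\qmodule{E}_\Wobs)\subseteq\lambda\qmodule{F}_\Wobs$. One then checks that $\cl(\Phi)$ intertwines the maps $\iota$, since $\Phi$ does, and sends $\cl(\qmodule{E})_\Null$ into $\cl(\qmodule{F})_\Null$, since $\Phi_\Wobs(\qmodule{E}_\Null)\subseteq\qmodule{F}_\Null$; hence it is a morphism in $\CoisoBimodTriple_{\ring{R}}(\cl(\qalgebra{B}),\cl(\qalgebra{A}))$. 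Functoriality is then immediate from the description on representatives: $\cl(\id_{\qmodule{E}})=\id_{\cl(\qmodule{E})}$, and $\cl(\Psi\circ\Phi)$ agrees with $\cl(\Psi)\circ\cl(\Phi)$ on every residue class, since this already holds componentwise for usual modules.
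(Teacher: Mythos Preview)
Your overall structure matches the paper's proof, which also checks in turn that $\iota_{\cl(\qmodule{E})}$ is well defined, that $\cl(\qmodule{E})_\Null$ sits inside $\cl(\qmodule{E})_\Wobs$, that the $\NULL$-conditions are inherited, and that morphisms pass to quotients. Where you go further than the paper is in isolating the descent of the $\cl(\qalgebra{B}_\Wobs)$-action on $\cl(\qmodule{E})_\Wobs$ to a $\cl(\qalgebra{B})_\Wobs$-action as the substantive point; the paper's very terse proof does not address this step at all.

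The problem is that your argument for this descent does not close the gap. You show that for $b\in\lambda\qalgebra{B}_\Total\cap\qalgebra{B}_\Wobs$ one has $\iota_{\qmodule{E}}(b\cdot x)\in\lambda\qmodule{E}_\Total$, so $b\cdot x$ dies in $\cl(\qmodule{E}_\Total)$; but what is required is $b\cdot x\in\lambda\qmodule{E}_\Wobs$, so that it dies in $\cl(\qmodule{E})_\Wobs=\qmodule{E}_\Wobs/\lambda\qmodule{E}_\Wobs$. Even when $\iota_{\qmodule{E}}$ is injective, $\iota_{\qmodule{E}}^{-1}(\lambda\qmodule{E}_\Total)$ can be strictly larger than $\lambda\qmodule{E}_\Wobs$, so this inference fails. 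Concretely, take $\qalgebra{B}_\Total=\Mat_2(\ring{R}[[\lambda]])$, $\qalgebra{B}_\Wobs=\ring{R}[[\lambda]]\,\Unit+\ring{R}[[\lambda]]\,(\lambda E_{12})$, $\qalgebra{B}_\Null=0$, and $\qmodule{E}=\qalgebra{B}$ the identity bimodule: then $b=\lambda E_{12}$ lies in $\lambda\qalgebra{B}_\Total\cap\qalgebra{B}_\Wobs$, yet $b\cdot\Unit=\lambda E_{12}\notin\lambda\qalgebra{B}_\Wobs$, so the action does not descend. Thus the obstacle you correctly flag is genuine, and neither your argument nor the paper's proof supplies what is needed; the statement seems to require an extra hypothesis such as $\lambda\qalgebra{B}_\Total\cap\qalgebra{B}_\Wobs=\lambda\qalgebra{B}_\Wobs$ (and likewise for $\qalgebra{A}$), which does hold in the motivating star-product examples where the $\Wobs$-components arise as normalizers inside $\lambda$-adically complete, torsion-free algebras.
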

\begin{proof}
    First note that the morphism
    $\iota_{\qmodule{E}} \colon \qmodule{E}_\Wobs \longrightarrow
    \qmodule{E}_\Total$
    induces a morphism
    $\iota_{\cl(\qmodule{E})} \colon \cl(\qmodule{E})_\Wobs
    \longrightarrow \cl(\qmodule{E})_\Total$
    by the $\ring{R}[[\lambda]]$-linearity of $\iota_{\module{E}}$.
    By definition,
    $\cl(\qmodule{E})_\Null \subseteq \cl(\qmodule{E})_\Wobs$ is a
    submodule.  Moreover,
    $\cl(\qalgebra{A}_\Null) \cdot \cl(\qmodule{E})_\Wobs =
    \cl(\qalgebra{A}_\Null \cdot \qmodule{E}_\Wobs) \subseteq
    \cl(\qmodule{E}_\Null)$
    and
    $\cl(\qmodule{E}_\Wobs) \cdot \cl(\qalgebra{B}_\Null) =
    \cl(\qmodule{E}_\Wobs \cdot \qalgebra{A}_\Null) \subseteq \cl
    (\qmodule{E}_\Null)$
    hold, hence $\cl(\qmodule{E})$ is a coisotropic bimodule.
    Finally, for a morphism
    $\deform{T} \colon \qmodule{E} \longrightarrow\qmodule{E}^\prime$
    between coisotropic modules, we have
    $\cl(\deform{T}) (\cl (\qmodule{E}_\Null)) = \cl (\deform{T}
    (\qmodule{E}_\Null)) \subseteq \cl( \qmodule{E}_\Null^\prime)$.
    Thus $\cl(\deform{T})$ is a morphism indeed. Then the
    functoriality is clear.
\end{proof}

Note that in contrast to the classical limit of coisotropic algebras
we do not construct $\cl(\qmodule{E})_\Wobs$ as a submodule of
$\cl(\qmodule{E})_\Total$ which is consistent with our requirement
that we only need a morphism
$\iota_{\cl(\qmodule{E})}\colon \cl(\qmodule{E})_\Wobs \longrightarrow
\cl(\qmodule{E})_\Total$.

To make this into a functor of bicategories we also need two natural
isomorphisms taking care of the composition of 1-morphisms and
identities.
\begin{lemma}
    \label{lem:ClassicalLimitCompositionNatIso}%
    Let $\qalgebra{A}, \qalgebra{B}, \qalgebra{C} \in
    \CoisoAlgTriple_{\ring{R}[[\lambda]]}$ be coisotropic triples of
    algebras over $\ring{R}[[\lambda]]$.  Moreover, let $\qmodule{F}
    \in \CoisoBimodTriple_{\ring{R}[[\lambda]]}(\qalgebra{C},
    \qalgebra{B})$ and $\qmodule{E} \in
    \CoisoBimodTriple_{\ring{R}[[\lambda]]}(\qalgebra{B},
    \qalgebra{A})$ be coisotropic bimodules.  Then
    \begin{equation}
        \label{eq:NaturalTrafom}
        \mathsf{m}
        \colon
        \cl(\qmodule{F}) \tensor[\cl(\qalgebra{B})] \cl(\qmodule{E})
        \ni
        \cl(y) \tensor \cl(x)
        \longmapsto
        \cl (y \tensor x)
        \in
        \cl(\qmodule{F} \tensor[\qalgebra{B}] \qmodule{E})
    \end{equation}
    defines a natural isomorphism $\mathsf{m} \colon
    \tensor[\cl(\qalgebra{B})] \circ \operatorname{(\cl \times \cl)}
    \Longrightarrow \cl \circ \tensor[\qalgebra{B}]$.
\end{lemma}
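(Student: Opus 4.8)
The plan is to prove the statement at the level of each pair $(\qmodule{F},\qmodule{E})$ of composable coisotropic bimodules: I would check that the prescribed map $\mathsf{m}(\qmodule{F},\qmodule{E})$ is a well-defined isomorphism in $\CoisoBimodTriple_\ring{R}(\cl(\qalgebra{C}),\cl(\qalgebra{A}))$, and then verify naturality by a short computation on factorizing tensors. The conceptual input is the familiar fact that the classical limit commutes with relative tensor products, applied separately to the $\TOTAL$- and $\WOBS$-components; the $\NULL$-component and the compatibility with the structure maps $\iota$ are then dealt with using the explicit formulas from Lemma~\ref{lemma:TensorProductBimodules} and the description of $\cl$ on bimodules from Lemma~\ref{lemma:ClassicalLimitForCoisotropicBimodules}.

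For the $\TOTAL$-component I would first check that
\[
    \mathsf{m}_\Total\colon
    \cl(\qmodule{F}_\Total) \tensor[\cl(\qalgebra{B}_\Total)] \cl(\qmodule{E}_\Total)
    \ni \cl(y)\tensor\cl(x)
    \longmapsto
    \cl(y\tensor x)
    \in \cl\big(\qmodule{F}_\Total \tensor[\qalgebra{B}_\Total] \qmodule{E}_\Total\big)
\]
is well defined: the assignment $(y,x)\mapsto\cl(y\tensor x)$ is $\ring{R}[[\lambda]]$-bilinear and $\qalgebra{B}_\Total$-balanced, and since $\cl(\lambda y\tensor x)=\cl(\lambda(y\tensor x))=0=\cl(y\tensor\lambda x)$ it descends to a $\cl(\qalgebra{B}_\Total)$-balanced map on $\cl(\qmodule{F}_\Total)\times\cl(\qmodule{E}_\Total)$, hence to $\mathsf{m}_\Total$. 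For bijectivity I would exhibit the inverse explicitly: because $\lambda$ is central in $\qalgebra{B}_\Total$, tensoring the two canonical projections produces a $\qalgebra{B}_\Total$-balanced map $\qmodule{F}_\Total\times\qmodule{E}_\Total\to\cl(\qmodule{F}_\Total)\tensor[\cl(\qalgebra{B}_\Total)]\cl(\qmodule{E}_\Total)$ which annihilates $\lambda(\qmodule{F}_\Total\tensor[\qalgebra{B}_\Total]\qmodule{E}_\Total)$, hence factors through $\cl(\qmodule{F}_\Total\tensor[\qalgebra{B}_\Total]\qmodule{E}_\Total)$; evaluating on factorizing tensors shows this is a two-sided inverse of $\mathsf{m}_\Total$. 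The identical argument yields a well-defined isomorphism $\mathsf{m}_\Wobs$ for the $\WOBS$-components.

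It remains to see that $(\mathsf{m}_\Total,\mathsf{m}_\Wobs)$ assembles into a morphism of coisotropic triple bimodules and is invertible as such. Compatibility with the structure maps, $\iota_{\cl(\qmodule{F}\tensor\qmodule{E})}\circ\mathsf{m}_\Wobs=\mathsf{m}_\Total\circ\iota_{\cl(\qmodule{F})\tensor\cl(\qmodule{E})}$, is immediate on $\cl(y)\tensor\cl(x)$ since both sides produce $\cl(\iota_\qmodule{F}(y)\tensor\iota_\qmodule{E}(x))$, using $\iota_{\qmodule{F}\tensor\qmodule{E}}=\iota_\qmodule{F}\tensor\iota_\qmodule{E}$ and $\ring{R}[[\lambda]]$-linearity. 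For the $\NULL$-components I would invoke Lemma~\ref{lemma:TensorProductBimodules}: the $\NULL$-part of $\cl(\qmodule{F})\tensor[\cl(\qalgebra{B})]\cl(\qmodule{E})$ and the $\NULL$-part of $\cl(\qmodule{F}\tensor[\qalgebra{B}]\qmodule{E})$ are both generated by classes of elementary tensors having one factor in a $\NULL$-component, and $\mathsf{m}_\Wobs$ matches these generators; hence $\mathsf{m}_\Wobs$ restricts to a bijection between the $\NULL$-components, and the same holds for its inverse, so $\mathsf{m}(\qmodule{F},\qmodule{E})$ is indeed invertible in $\CoisoBimodTriple_\ring{R}$.

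Naturality is then a one-line check: for morphisms $\deform{S}\colon\qmodule{F}\to\qmodule{F}'$ and $\deform{T}\colon\qmodule{E}\to\qmodule{E}'$ both routes around the naturality square send $\cl(y)\tensor\cl(x)$ to $\cl\big(\deform{S}(y)\tensor\deform{T}(x)\big)$, in the $\TOTAL$- and in the $\WOBS$-component alike, so the square commutes. I expect the only step that is not purely formal to be the construction of the inverse of $\mathsf{m}_\Total$ and $\mathsf{m}_\Wobs$ — that is, the statement that $\cl$ commutes with the relative tensor product — whose essential input is the centrality of $\lambda$; everything else reduces to bookkeeping on factorizing tensors.
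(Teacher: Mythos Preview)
Your proposal is correct and follows essentially the same approach as the paper's own proof: verify that $\mathsf{m}$ is a well-defined isomorphism on the $\TOTAL$- and $\WOBS$-components, check compatibility with the structure maps $\iota$ and preservation of the $\NULL$-component, and finish with naturality on factorizing tensors. The paper compresses all of this into a single sentence (``a routine check''), whereas you spell out the construction of the inverse via the universal property and the centrality of $\lambda$, and you explicitly observe that the inverse also preserves the $\NULL$-component so that invertibility holds in $\CoisoBimodTriple_\ring{R}$; these are exactly the points the paper leaves implicit.
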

\begin{proof}
    A routine check shows that $\mathsf{m}$ is a well-defined
    isomorphism on both the $\TOTAL$- and $\WOBS$-component.
    Moreover, it is a morphism of coisotropic bimodules since it
    respects the $\NULL$-component, i.e.  we have
    $\mathsf{m}\left((\cl(\qmodule{F}) \tensor[\cl(\qalgebra{B})]
        \cl(\qmodule{E}))_0\right) \subseteq \cl(\qmodule{F}
    \tensor[\qalgebra{B}] \qmodule{E})_0$, and $\mathsf{m} \circ
    (\cl(\iota_{\qmodule{F}}) \tensor \cl(\iota_{\qmodule{E}})) =
    \cl(\iota_{\cl(\qmodule{F})} \tensor \iota_{\cl(\qmodule{E})})
    \circ \mathsf{m}$ holds.  Finally, one can easily check that it is
    indeed a natural isomorphism, i.e. it holds $\mathsf{m} \circ
    (\cl(\deform{T}) \tensor \cl(\deform{S}) ) = \cl(\deform{T}
    \tensor \deform{S}) \circ \mathsf{m}$, for $\deform{T} \colon
    \qmodule{F} \longrightarrow \qmodule{F}'$ and $\deform{S} \colon
    \qmodule{E} \longrightarrow \qmodule{E}'$.
\end{proof}

Putting these lemmas together we finally get the statement we aimed
for.
\begin{theorem}[Classical limit for $\CoisoBimodTriple_{\ring{R}[[\lambda]]}$]
    \label{theorem:ClisFunctor}%
    The classical limit as constructed above is a functor of
    bicategories
    \begin{equation}
        \label{eq:ClFunctor}
        \cl
        \colon
        \CoisoBimodTriple_{\ring{R}[[\lambda]]}
        \longrightarrow
        \CoisoBimodTriple_\ring{R}.
    \end{equation}
\end{theorem}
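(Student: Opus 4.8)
The plan is to verify that the data assembled above satisfy the axioms of a functor of bicategories in the sense of Definition~\ref{definition:FunctorBicategories}. On $0$-morphisms we use the classical limit of coisotropic triples of algebras from Definition~\ref{definition:ClassicalLimitCoisotropicTriples}, which is functorial by Proposition~\ref{prop:ClassicalLimitFunctorCoisoAlgTriple}; on each hom-category we use the functor $\cl\colon \CoisoBimodTriple_{\ring{R}[[\lambda]]}(\qalgebra{B},\qalgebra{A}) \longrightarrow \CoisoBimodTriple_{\ring{R}}(\cl(\qalgebra{B}),\cl(\qalgebra{A}))$ from Lemma~\ref{lemma:ClassicalLimitForCoisotropicBimodules}; and the compositor natural isomorphism $\mathsf{m}$ is the one produced in Lemma~\ref{lem:ClassicalLimitCompositionNatIso}. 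For the unitor we observe that the identity $1$-morphism of a coisotropic triple $\qalgebra{A}$ is $\qalgebra{A}$ itself viewed as a bimodule over itself, and that applying $\cl$ to it component by component returns exactly the triple $(\cl(\qalgebra{A})_\Total, \cl(\qalgebra{A})_\Wobs, \cl(\qalgebra{A})_\Null)$, which is the identity $1$-morphism of $\cl(\qalgebra{A})$; hence the unitor may be taken to be the identity $2$-morphism.

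It then remains to check the coherence axioms, namely the compatibility of $\mathsf{m}$ with the associators $\asso$ and with the left and right unitors $\lidentity$, $\ridentity$ of both bicategories. The key point is structural: by Lemmas~\ref{lemma:AssociativityTensorProduct} and \ref{lemma:IdentitiesTensorProduct} the associators and unitors of $\CoisoBimodTriple$ are, on the $\TOTAL$- and on the $\WOBS$-component, nothing but the usual associators and unitors for ordinary bimodules, while $\mathsf{m}$ is, on those same two components, the canonical isomorphism expressing that the classical limit commutes with the tensor product of ordinary bimodules (this isomorphism exists because base change along $\ring{R}[[\lambda]] \to \ring{R}$ is right exact). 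Consequently each coherence diagram, restricted to the $\TOTAL$-component and separately to the $\WOBS$-component, becomes a diagram of ordinary bimodules and bimodule maps built solely from these standard isomorphisms, and it commutes by a direct evaluation on factorizing tensors $\cl(z) \tensor \cl(y) \tensor \cl(x)$. Since a $2$-morphism of coisotropic bimodules is determined by its $\TOTAL$- and $\WOBS$-components, agreement on these two components forces agreement of the entire coherence diagrams.

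The part that demands the most care, and carries the heaviest bookkeeping, is tracking the $\NULL$-components: one must confirm that $\mathsf{m}$, the associators, the unitors, and every edge of the coherence diagrams restrict correctly to the $\NULL$-submodules. This rests on the identity $(\qmodule{F} \tensor[\qalgebra{B}] \qmodule{E})_\Null = \qmodule{F}_\Wobs \tensor[\qalgebra{B}_\Wobs] \qmodule{E}_\Null + \qmodule{F}_\Null \tensor[\qalgebra{B}_\Wobs] \qmodule{E}_\Wobs$ together with $\cl(\qmodule{E})_\Null = \image(\qmodule{E}_\Null \longrightarrow \cl(\qmodule{E}_\Wobs))$, exactly in the style of the proofs of Lemmas~\ref{lemma:TensorProductBimodules}, \ref{lemma:TensorProductBimoduleMorphisms} and \ref{lem:ClassicalLimitCompositionNatIso}, so no genuinely new difficulty arises. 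Assembling the four pieces of data together with the verified coherences yields the asserted functor of bicategories $\cl\colon \CoisoBimodTriple_{\ring{R}[[\lambda]]} \longrightarrow \CoisoBimodTriple_{\ring{R}}$.
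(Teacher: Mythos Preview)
Your proposal is correct and follows essentially the same approach as the paper: assemble the data from Proposition~\ref{prop:ClassicalLimitFunctorCoisoAlgTriple}, Lemma~\ref{lemma:ClassicalLimitForCoisotropicBimodules}, and Lemma~\ref{lem:ClassicalLimitCompositionNatIso}, take the unitor to be the identity, and verify the coherences componentwise. Your write-up is in fact more explicit than the paper's, which simply states that the coherences ``can then be checked on elements''; your observation that a $2$-morphism is determined by its $\TOTAL$- and $\WOBS$-components and that on each of these the coherence diagrams reduce to the standard ones for ordinary bimodules is a useful clarification of why that elementwise check goes through.
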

\begin{proof}
    On coisotropic algebras we use the classical limit defined in
    Proposition~\ref{prop:ClassicalLimitFunctorCoisoAlgTriple}.  For
    any two coisotropic triples
    $\qalgebra{A},\qalgebra{B} \in
    \CoisoAlgTriple_{\ring{R}[[\lambda]]}$
    there exists a classical limit functor
    $\cl \colon \CoisoBimodTriple_{\ring{R}[[\lambda]]}
    (\qmodule{B},\qmodule{A}) \longrightarrow
    \CoisoBimodTriple_\ring{R}(\cl(\qalgebra{B}),\cl(\qalgebra{A}))$
    by Lemma~\ref{lemma:ClassicalLimitForCoisotropicBimodules}.  The
    natural isomorphisms of composition are given as in
    Lemma~\ref{lem:ClassicalLimitCompositionNatIso}.  The unit
    2-isomorphisms are just the identities
    $\mathsf{u}_{\qalgebra{A}} = \id_{\cl(\qalgebra{A})}$.  The
    coherences can then be checked on elements.
\end{proof}

Since this classical limit is a functor of bicategories it drops to a
functor of the corresponding Picard (bi-)groupoids.  Thus Morita
equivalent coisotropic algebras get mapped to Morita equivalent
coisotropic algebras.  As always we can view
$\CoisoBimodPair_{\ring{R}[[\lambda]]}$ as a sub-bicategory of
$\CoisoBimodTriple_{\ring{R}[[\lambda]]}$, thus giving us a classical
limit functor for coisotropic pairs $\cl \colon
\CoisoBimodPair_{\ring{R}[[\lambda]]} \longrightarrow
\CoisoBimodPair_\ring{R}$ as well.

Now the question arises if this classical limit is compatible with
reduction, hence, if the diagram
\begin{equation}
    \label{eq:ClassicalLimitCommutesWithReductionBimodule}
    \begin{tikzcd}
	\CoisoBimodPair_{\ring{R}[[\lambda]]}
        \arrow{r}{\cl}
        \arrow{d}[swap]{\reduce}
	& \CoisoBimodPair_\ring{R}
        \arrow{d}{\reduce} \\
	\Bimodules_{\ring{R}[[\lambda]]}
        \arrow{r}{\cl}
	& \Bimodules_\ring{R}
    \end{tikzcd}
\end{equation}
commutes.  We only consider coisotropic pairs here, since we know that
reduction of triples is simply given by forgetting the
$\TOTAL$-component and using the reduction functor on pairs.  We have
to be careful here, since this is a diagram consisting of functors
between bicategories.  So instead of checking both compositions for
equality we should see if they are equal up to higher morphisms.  More
precisely, this means we have to find natural transformations $\mu
\colon (\cl \circ \red ) \Longrightarrow (\red \circ \cl )$ and
$\hat{\mu} \colon (\red \circ \cl ) \Longrightarrow (\cl \circ \red)$
of functors between bicategories and invertible modifications $\Gamma
\colon \hat{\mu} \circ \mu \longRrightarrow \id_{\cl \circ \red}$ and
$\hat{\Gamma} \colon \mu \circ \hat{\mu} \longRrightarrow \id_{\red
  \circ \cl}$ implementing that $\hat{\mu}$ is the inverse of $\mu$,
see Definition~\ref{definition:NaturalTransformationBicategories} and
Definition~\ref{definition:Modification}.  As a diagram we get
something like
\begin{equation}
    \label{eq:ClassicalLimitCommutesWithReductionHigher}
    \begin{tikzcd}[column sep = huge, row sep = huge]
        \CoisoBimodPair_{\ring{R}[[\lambda]]}
        \arrow{r}{\cl}
        \arrow{d}[swap]{\reduce}
        & \CoisoBimodPair_{\ring{R}}
        \arrow{d}{\reduce} \\
        \Bimodules_{\ring{R}[[\lambda]]}
        \arrow{r}[swap]{\cl}
        \arrow[Rightarrow, bend right=20, ru,
        start anchor = {[yshift = -.5ex]north east},
        end anchor = {[xshift = .5ex]south west},
        "\mu"{below,xshift = 1ex},
        ""{name=D}]
        \arrow[Rightarrow, bend right=20, from=ru,
        start anchor = {[yshift = .5ex]south west},
        end anchor = {[xshift=-.5ex]north east},
        "\hat{\mu}"{above},
        ""{name=U}]
        & \Bimodules_{\ring{R}}
        \arrow[from=U, to=D,
        "\Gamma",
        triplearrow,
        start anchor = west,
        end anchor = {[yshift = .5ex, xshift = -.5ex]east}]
        \arrow[from=U, to=D,
        thirdline,
        start anchor = west,
        end anchor = east]
    \end{tikzcd}
\end{equation}
There are quite a lot of things to check, so we start with giving some
properties that will later be combined to give the commutativity of
\eqref{eq:ClassicalLimitCommutesWithReductionBimodule}.

In Proposition~\ref{prop:ClassicalLimitFunctorCoisoAlgTriple} we
already showed that on objects the diagram
\eqref{eq:ClassicalLimitCommutesWithReductionBimodule} commutes up to
a natural isomorphisms.  Since we can interpret morphisms of
(coisotropic) algebras as (coisotropic) modules we can restate parts
of this result as follows:
\begin{lemma}
    \label{lem:TwistedIdentityOneMorphism}%
    Let $\qalgebra{A} \in \CoisoAlgPair_{\ring{R}[[\lambda]]}$.  Then
    $\mu_{\qalgebra{A}} =
    \deco{\eta_{\qalgebra{A}}^{-1}}{}{{(\cl(\qalgebra{A}_\red))}}{}{}$
    with $\eta_{\qalgebra{A}} \colon \cl(\qalgebra{A}_\red) \to
    \cl(\qalgebra{A})_\red$ given by
    \eqref{eq:ClassicalLimitCommutesWithReductionAlgebras} is an
    invertible $( \cl(\qalgebra{A})_\red,
    \cl(\qalgebra{A}_\red))$-bimodule in $\Bimodules_\ring{R}$.
\end{lemma}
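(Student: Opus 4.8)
The statement is the standard fact that an algebra isomorphism, regarded as a twist of the regular bimodule, is an invertible $1$-morphism in the bicategory of bimodules; the plan is simply to package this for the isomorphism $\eta_{\qalgebra{A}}$ produced in Proposition~\ref{prop:ClassicalLimitCommutesWithReductionAlgebras}. First I would recall that by that proposition $\eta_{\qalgebra{A}} \colon \cl(\qalgebra{A}_\red) \to \cl(\qalgebra{A})_\red$ is an isomorphism of unital $\ring{R}$-algebras, and that $\mu_{\qalgebra{A}}$ is, by definition, the bimodule with underlying $\ring{R}$-module $\cl(\qalgebra{A}_\red)$ equipped with the ordinary right $\cl(\qalgebra{A}_\red)$-multiplication and the left $\cl(\qalgebra{A})_\red$-action obtained by twisting the ordinary left multiplication with $\eta_{\qalgebra{A}}^{-1}$. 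That these two actions commute and turn $\mu_{\qalgebra{A}}$ into a $(\cl(\qalgebra{A})_\red, \cl(\qalgebra{A}_\red))$-bimodule over $\ring{R}$ is immediate from associativity in $\cl(\qalgebra{A}_\red)$.

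To see invertibility I would argue in two equivalent ways. Abstractly: $\eta_{\qalgebra{A}}^{-1}$ is an invertible $1$-morphism in $\Algebras_\ring{R}$ viewed as a locally discrete bicategory (its inverse being $\eta_{\qalgebra{A}}$), and $\mu_{\qalgebra{A}}$ is its image under the embedding of $\Algebras_\ring{R}$ into $\Bimodules_\ring{R}$ recalled before Proposition~\ref{propostion:EmbeddingCoisoAlgInCoisoBimod}; since functors of bicategories send invertible $1$-morphisms to invertible $1$-morphisms, $\mu_{\qalgebra{A}}$ is invertible in $\Bimodules_\ring{R}$. Concretely: I would exhibit as inverse the $(\cl(\qalgebra{A}_\red), \cl(\qalgebra{A})_\red)$-bimodule $\nu_{\qalgebra{A}}$ with underlying module $\cl(\qalgebra{A}_\red)$, ordinary left $\cl(\qalgebra{A}_\red)$-action and right $\cl(\qalgebra{A})_\red$-action twisted by $\eta_{\qalgebra{A}}^{-1}$, and then check that multiplication followed by $\eta_{\qalgebra{A}}$ gives a bimodule isomorphism $\mu_{\qalgebra{A}} \tensor[\cl(\qalgebra{A}_\red)] \nu_{\qalgebra{A}} \cong \cl(\qalgebra{A})_\red$ onto the regular bimodule, while plain multiplication gives $\nu_{\qalgebra{A}} \tensor[\cl(\qalgebra{A})_\red] \mu_{\qalgebra{A}} \cong \cl(\qalgebra{A}_\red)$; in the second identification one uses surjectivity of $\eta_{\qalgebra{A}}$ to identify the tensor product balanced over $\cl(\qalgebra{A})_\red$ with the one balanced over $\cl(\qalgebra{A}_\red)$. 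Both checks are the routine computations on factorizing tensors.

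I do not expect any real obstacle: the only genuine content, namely that $\eta_{\qalgebra{A}}$ is a well-defined bijective algebra homomorphism, was already settled in Proposition~\ref{prop:ClassicalLimitCommutesWithReductionAlgebras}, and everything else is bookkeeping with the twist conventions. If desired, one can additionally upgrade the pair $(\mu_{\qalgebra{A}}, \nu_{\qalgebra{A}})$ to an adjoint equivalence in the sense of \eqref{eq:MoritaEquivalenceIsoCompatibility}, but this is not needed for the mere invertibility asserted here.
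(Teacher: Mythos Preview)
Your proposal is correct and matches the paper's treatment: the paper does not give a separate proof of this lemma but presents it explicitly as a restatement of Proposition~\ref{prop:ClassicalLimitCommutesWithReductionAlgebras} via the embedding of algebras into bimodules, which is exactly your abstract argument. Your concrete construction of the inverse bimodule $\nu_{\qalgebra{A}}$ is additional (harmless) detail; the paper later introduces essentially this object as $\hat\mu_{\qalgebra{A}}$ in \eqref{eq:HatMu}, though with the twist placed on the left rather than the right.
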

Here
$\deco{\eta_{\qalgebra{A}}^{-1}}{}{{(\cl(\qalgebra{A}_\red))}}{}{}$
denotes the algebra $\cl(\qalgebra{A}_\red)$ regarded as a module over
itself with left multiplication twisted by the map
$\eta^{-1}_{\qalgebra{A}}$.  By essentially the same computations as
in Proposition~\ref{prop:ClassicalLimitFunctorCoisoAlgTriple} we
obtain a similar result for bimodules instead of algebras:
\begin{lemma}
    \label{lemma:ClassicalLimitCommutesWithReductionModules}%
    For every coisotropic $(\qalgebra{B}, \qalgebra{A})$-bimodule
    $\qmodule{E} \in
    \CoisoBimod_{\ring{R}[[\lambda]]}(\qalgebra{B},\qalgebra{A})$ the
    map
    \begin{equation}
        \label{eq:ClassicalLimitCommutesWithReductionModules}
        \eta(\qmodule{E})
        \colon
        \deco{\eta_{\qalgebra{B}}^{-1}}{}{(\cl(\qmodule{E}_\red))}{}{}
        \ni
        \cl([\textbf{x}])
        \longmapsto
        [\cl(\textbf{x})]
        \in
        \deco{}{}{(\cl(\qmodule{E})_\red)}{\eta_{\qalgebra{A}}}{}
    \end{equation}
    is a well-defined isomorphism of $(\cl(\qalgebra{B})_\red,
    \cl(\qalgebra{A}_\red))$-bimodules.
\end{lemma}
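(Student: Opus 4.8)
The plan is to observe that, as $\ring{R}$-modules, both the source $\cl(\qmodule{E}_\red)$ and the target $\cl(\qmodule{E})_\red$ of $\eta(\qmodule{E})$ are canonically the same quotient of $\qmodule{E}_\Wobs$, namely $\qmodule{E}_\Wobs \big/ (\qmodule{E}_\Null + \lambda \qmodule{E}_\Wobs)$, and that $\eta(\qmodule{E})$ is exactly the resulting identification. On the source side, $\qmodule{E}_\red = \qmodule{E}_\Wobs \big/ \qmodule{E}_\Null$ and $\lambda \qmodule{E}_\red$ is the image of $\lambda \qmodule{E}_\Wobs$ under the projection $\qmodule{E}_\Wobs \to \qmodule{E}_\red$, so the third isomorphism theorem gives $\cl(\qmodule{E}_\red) = \qmodule{E}_\red \big/ \lambda \qmodule{E}_\red \cong \qmodule{E}_\Wobs \big/ (\qmodule{E}_\Null + \lambda \qmodule{E}_\Wobs)$. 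On the target side, Lemma~\ref{lemma:ClassicalLimitForCoisotropicBimodules} gives $\cl(\qmodule{E})_\Wobs = \qmodule{E}_\Wobs \big/ \lambda \qmodule{E}_\Wobs$ and identifies $\cl(\qmodule{E})_\Null$ with the image of $\qmodule{E}_\Null$ therein, so again by the third isomorphism theorem $\cl(\qmodule{E})_\red = \cl(\qmodule{E})_\Wobs \big/ \cl(\qmodule{E})_\Null \cong \qmodule{E}_\Wobs \big/ (\qmodule{E}_\Null + \lambda \qmodule{E}_\Wobs)$. First I would spell out that chasing a representative $x \in \qmodule{E}_\Wobs$ through the two chains of quotients identifies $\cl([x]) \mapsto [\cl(x)]$ with the composite of these two isomorphisms; in particular $\eta(\qmodule{E})$ is well defined, $\ring{R}$-linear, and bijective, with inverse $[\cl(x)] \mapsto \cl([x])$. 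Concretely, well-definedness is the remark that if $x - x' \in \qmodule{E}_\Null + \lambda \qmodule{E}_\Wobs$ then $\cl(x) - \cl(x')$ lies in the image of $\qmodule{E}_\Null$ inside $\cl(\qmodule{E})_\Wobs$, which is $\cl(\qmodule{E})_\Null$.

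Next I would check that $\eta(\qmodule{E})$ respects the bimodule structures. The only point needing care is the bookkeeping of the twists: on the source the left $\cl(\qalgebra{B})_\red$-action is the natural $\cl(\qalgebra{B}_\red)$-action pulled back along $\eta_{\qalgebra{B}}^{-1}$, whereas on the target the right $\cl(\qalgebra{A}_\red)$-action is the natural $\cl(\qalgebra{A})_\red$-action pushed forward along $\eta_{\qalgebra{A}}$, where $\eta_{\qalgebra{A}}$ and $\eta_{\qalgebra{B}}$ are the algebra isomorphisms from Proposition~\ref{prop:ClassicalLimitCommutesWithReductionAlgebras}. I would verify left-linearity on representatives: for $\deform{b} \in \qalgebra{B}_\Wobs$ one has $\cl([\deform{b}]) \cdot \cl([x]) = \cl([\deform{b} x])$ in $\cl(\qmodule{E}_\red)$ and $[\cl(\deform{b})] \cdot [\cl(x)] = [\cl(\deform{b} x)]$ in $\cl(\qmodule{E})_\red$, while $\eta_{\qalgebra{B}}(\cl([\deform{b}])) = [\cl(\deform{b})]$ by the definition of $\eta_{\qalgebra{B}}$; combining these shows that $\eta(\qmodule{E})$ intertwines the left actions once the source action is twisted by $\eta_{\qalgebra{B}}^{-1}$. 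The right $\cl(\qalgebra{A}_\red)$-linearity, with the twist by $\eta_{\qalgebra{A}}$ on the target, is checked by the mirror-image computation with $\deform{a} \in \qalgebra{A}_\Wobs$ acting from the right. Hence $\eta(\qmodule{E})$ is an isomorphism of $(\cl(\qalgebra{B})_\red, \cl(\qalgebra{A}_\red))$-bimodules.

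Finally, although the statement asserts only that $\eta(\qmodule{E})$ is such an isomorphism for each fixed $\qmodule{E}$, the same evaluation-on-representatives technique shows that for a morphism $\deform{T} \colon \qmodule{E} \longrightarrow \qmodule{E}'$ of coisotropic bimodules the square formed by $\eta(\qmodule{E})$, $\eta(\qmodule{E}')$, $(\cl \circ \reduce)(\deform{T})$ and $(\reduce \circ \cl)(\deform{T})$ commutes, so that $\eta$ assembles into the natural transformation needed for the later comparison of $\cl$ and $\reduce$. I do not anticipate a genuine obstacle: everything comes down to the two applications of the third isomorphism theorem together with the twist bookkeeping, and the one thing to be careful about is keeping straight which of $\eta_{\qalgebra{A}}$ and $\eta_{\qalgebra{B}}$ twists which side and in which direction, so that the natural $\cl(\qalgebra{A}_\red)$-, $\cl(\qalgebra{A})_\red$-, $\cl(\qalgebra{B}_\red)$- and $\cl(\qalgebra{B})_\red$-actions are matched consistently on the two sides.
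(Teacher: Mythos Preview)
Your proposal is correct and follows essentially the same approach as the paper, which in fact does not spell out a proof at all but merely remarks that the argument is ``essentially the same computations as in Proposition~\ref{prop:ClassicalLimitCommutesWithReductionAlgebras}''. Your use of the third isomorphism theorem to identify both sides with $\qmodule{E}_\Wobs \big/ (\qmodule{E}_\Null + \lambda \qmodule{E}_\Wobs)$ makes the well-definedness and bijectivity transparent, and your explicit bookkeeping of the $\eta_{\qalgebra{A}}$- and $\eta_{\qalgebra{B}}$-twists is exactly the content that the paper leaves implicit.
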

This family of 2-morphisms is in fact a natural transformation:
\begin{lemma}
    \label{lemma:NaturalIsosForRedAndCl}%
    For any two coisotropic algebras $\qalgebra{A}, \qalgebra{B} \in
    \CoisoBimodPair_0$ there is a natural isomorphism
    \begin{equation}
        \label{eq:NaturalMu}
        \mu
        \colon
        (\mu_{\qalgebra{B}})_*
        \circ
        (\cl \circ \red)_{\qalgebra{B}\qalgebra{A}}
        \Longrightarrow
        (\mu_{\qalgebra{A}})^*
        \circ
        (\red \circ \cl)_{\qalgebra{B}\qalgebra{A}}
    \end{equation}
    between the functors
    \begin{equation}
        \label{eq:MuIsNaturalTrafo}
        (\mu_{\qalgebra{B}})_*
        \circ
        (\cl \circ \red)_{\qalgebra{B}\qalgebra{A}}
        \colon
        \CoisoBimodPair_{\ring{R}[[\lambda]]}
        (\qalgebra{B}, \qalgebra{A})
        \longrightarrow
        \Bimodules_{\ring{R}}
        (\cl(\qalgebra{B})_\red, \cl(\qalgebra{A}_\red))
    \end{equation}
    and
    \begin{equation}
        \label{eq:MuIsStillNatural}
        (\mu_{\qalgebra{A}})^*
        \circ
        (\red \circ \cl)_{\qalgebra{B}\qalgebra{A}}
        \colon
        \CoisoBimodPair_{\ring{R}[[\lambda]]}
        (\qalgebra{B}, \qalgebra{A})
        \longrightarrow
        \Bimodules_{\ring{R}}
        (\cl(\qalgebra{B})_\red, \cl(\qalgebra{A}_\red)),
    \end{equation}
    given by the family
    \begin{equation}
        \label{eq:WhatIsMuAfterAll}
        \mu(\qmodule{E})
        =
        \ridentity^{-1}
        \circ
        \operatorname{\eta(\qmodule{E})}
        \circ
        \lidentity
        \colon
        \mu_{\qalgebra{B}} \tensor \cl(\qmodule{E}_\red)
        \longrightarrow
        \cl(\qmodule{E})_\red \tensor \mu_{\qalgebra{A}}.
    \end{equation}
    of 2-isomorphisms, with $\eta(\qmodule{E})$ as in
    Lemma~\ref{lemma:ClassicalLimitCommutesWithReductionModules}.
\end{lemma}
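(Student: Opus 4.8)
The plan is to verify, in order, that each component $\mu(\qmodule{E})$ in \eqref{eq:WhatIsMuAfterAll} is a well-defined invertible $2$-morphism with the asserted source and target, and then that the family $\{\mu(\qmodule{E})\}$ is natural in $\qmodule{E}$; together these give the natural isomorphism between the functors \eqref{eq:MuIsNaturalTrafo} and \eqref{eq:MuIsStillNatural}. The point behind the definition \eqref{eq:WhatIsMuAfterAll} — which makes all verifications routine — is that $\lidentity$, $\eta(\qmodule{E})$ and $\ridentity$ are each the identity when evaluated on representatives of the quotients involved, while source and target differ only through the twists by $\eta_{\qalgebra{B}}^{-1}$ and $\eta_{\qalgebra{A}}$ of Proposition~\ref{prop:ClassicalLimitCommutesWithReductionAlgebras}.

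First I would unwind the two whiskered functors. Post-composing with the $(\cl(\qalgebra{B})_\red, \cl(\qalgebra{B}_\red))$-bimodule $\mu_{\qalgebra{B}}$ of Lemma~\ref{lem:TwistedIdentityOneMorphism} sends a $(\cl(\qalgebra{B}_\red), \cl(\qalgebra{A}_\red))$-bimodule $\module{M}$ to $\mu_{\qalgebra{B}} \tensor[\cl(\qalgebra{B}_\red)] \module{M}$, and by the left identity $\lidentity$ of $\Bimodules_\ring{R}$ this is canonically $\module{M}$ with left $\cl(\qalgebra{B})_\red$-action pulled back along $\eta_{\qalgebra{B}}^{-1}$; for $\module{M} = \cl(\qmodule{E}_\red)$ this is exactly $\deco{\eta_{\qalgebra{B}}^{-1}}{}{(\cl(\qmodule{E}_\red))}{}{}$, the source of $\eta(\qmodule{E})$ in Lemma~\ref{lemma:ClassicalLimitCommutesWithReductionModules}. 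Dually, pre-composing with $\mu_{\qalgebra{A}}$ sends a $(\cl(\qalgebra{B})_\red, \cl(\qalgebra{A})_\red)$-bimodule $\module{N}$ to $\module{N} \tensor[\cl(\qalgebra{A})_\red] \mu_{\qalgebra{A}}$, which via the right identity $\ridentity$ of $\Bimodules_\ring{R}$ — with evaluation $n \tensor a \mapsto n \cdot \eta_{\qalgebra{A}}(a)$, well defined because $\eta_{\qalgebra{A}}$ is a unital algebra homomorphism — is $\module{N}$ with right $\cl(\qalgebra{A}_\red)$-action twisted by $\eta_{\qalgebra{A}}$; for $\module{N} = \cl(\qmodule{E})_\red$ this is $\deco{}{}{(\cl(\qmodule{E})_\red)}{\eta_{\qalgebra{A}}}{}$, the target of $\eta(\qmodule{E})$. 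Hence $\mu(\qmodule{E}) = \ridentity^{-1} \circ \eta(\qmodule{E}) \circ \lidentity$ is a composite of isomorphisms of $(\cl(\qalgebra{B})_\red, \cl(\qalgebra{A}_\red))$-bimodules, invertible since $\eta(\qmodule{E})$ is by Lemma~\ref{lemma:ClassicalLimitCommutesWithReductionModules} and $\lidentity$, $\ridentity$ are.

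Next I would check naturality. For a $2$-morphism $\deform{T}\colon \qmodule{E} \to \qmodule{E}'$ in $\CoisoBimodPair_{\ring{R}[[\lambda]]}(\qalgebra{B},\qalgebra{A})$ the functors \eqref{eq:MuIsNaturalTrafo} and \eqref{eq:MuIsStillNatural} send $\deform{T}$ to $\id_{\mu_{\qalgebra{B}}} \tensor \cl(\reduce(\deform{T}))$ and to $\reduce(\cl(\deform{T})) \tensor \id_{\mu_{\qalgebra{A}}}$ respectively; under the identifications of the previous step both become the map induced on quotients by $\deform{T}$, namely $[\cl(\textbf{x})] \mapsto [\cl(\deform{T}\textbf{x})]$, the twists playing no role since $\deform{T}$ is a bimodule morphism. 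As $\eta(\qmodule{E})$ and $\eta(\qmodule{E}')$ are the identity on representatives and $\lidentity$, $\ridentity$ are natural in $\Bimodules_\ring{R}$, the naturality square for $\mu$ commutes by a direct evaluation on classes, which establishes the natural isomorphism.

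I expect the only step needing real care to be the first one: matching the twisted bimodules $\mu_{\qalgebra{A}}$, $\mu_{\qalgebra{B}}$ of Lemma~\ref{lem:TwistedIdentityOneMorphism} with the source and target of $\eta(\qmodule{E})$ through the left and right identity constraints of $\Bimodules_\ring{R}$, while keeping straight on which side each whiskering acts. Once this bicategorical bookkeeping is made explicit, everything reduces to elementary computations on representatives of $\qmodule{E}_\red$, $\cl(\qmodule{E}_\red)$ and $\cl(\qmodule{E})_\red$, with no further obstruction.
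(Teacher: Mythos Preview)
Your proposal is correct and follows essentially the same approach as the paper: both reduce the claim to checking that the naturality square for $\mu$ commutes, which is done by evaluating on representatives. You spell out in more detail the identification of the whiskered functors with the twisted bimodules via $\lidentity$ and $\ridentity$, whereas the paper treats this as implicit in the definition \eqref{eq:WhatIsMuAfterAll} together with Lemma~\ref{lemma:ClassicalLimitCommutesWithReductionModules}, and simply states that the remaining naturality square is verified by a computation on elements.
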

\begin{proof}
    It is left to show that
    \begin{equation*}
        \begin{tikzcd}[column sep=huge, row sep = large]
            \mu_{\qalgebra{B}} \tensor \cl(\qmodule{E}_\red)
            \arrow{r}{\id_{\mu_{\qalgebra{B}}} \tensor \cl(\phi_\red)}
            \arrow{d}[swap]{\mu(\qmodule{E})}
            & \mu_{\qalgebra{B}} \tensor \cl(\qmodule{F}_\red)
            \arrow{d}{\mu(\qmodule{F})} \\
            \cl(\qmodule{E})_\red \tensor \mu_{\qalgebra{A}}
            \arrow{r}{\cl(\phi)_\red \tensor \id_{\mu_{\qalgebra{A}}}}
            & \cl(\qmodule{F})_\red \tensor \mu_{\qalgebra{A}}
        \end{tikzcd}
    \end{equation*}
    commutes for all $\phi \colon \qalgbimodule{B}{E}{A}
    \longrightarrow\qalgbimodule{B}{F}{A}$.  This can be done by a
    simple computation on elements.
\end{proof}

With all these lemmas we get a natural transformation of functors
between bicategories.
\begin{lemma}
    \label{lemma:MuNaturalTrafoBicatSense}%
    The 1-morphisms $\mu_{\qalgebra{A}} \in
    \Bimodules_\ring{R}(\cl(\qalgebra{A})_\red, \cl(\qalgebra{A}_\red))$
    from Lemma~\ref{lem:TwistedIdentityOneMorphism} together with the
    natural isomorphisms
    \begin{equation}
        \label{eq:MuBAManyStarsInFormula}
        \mu
        \colon
        (\mu_{\qalgebra{B}})_*
        \circ
        (\cl \circ \red)_{\qalgebra{B}\qalgebra{A}}
        \Longrightarrow
        (\mu_{\qalgebra{A}})^*
        \circ
        (\red \circ \cl)_{\qalgebra{B}\qalgebra{A}}
    \end{equation}
    from Lemma~\ref{lemma:NaturalIsosForRedAndCl} form a natural
    transformation
    \begin{equation}
        \mu \colon (\cl \circ \red ) \Longrightarrow (\red \circ \cl)
    \end{equation}
    of functors between bicategories.
\end{lemma}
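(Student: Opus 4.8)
The plan is to verify the two coherence axioms that, according to Definition~\ref{definition:NaturalTransformationBicategories}, promote the already-assembled data to a natural transformation of functors between bicategories. The $1$-morphism components $\mu_{\qalgebra{A}} \in \Bimodules_\ring{R}(\cl(\qalgebra{A})_\red, \cl(\qalgebra{A}_\red))$ are provided by Lemma~\ref{lem:TwistedIdentityOneMorphism}, and for each ordered pair $\qalgebra{A}, \qalgebra{B}$ the natural isomorphism $\mu$ between the two composite functors is provided by Lemma~\ref{lemma:NaturalIsosForRedAndCl}. Hence what remains is: (i) compatibility of $\mu$ with the composition of $1$-morphisms, and (ii) compatibility of $\mu$ with the identity $1$-morphisms.

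For (i) I would fix coisotropic pairs $\qalgebra{A}, \qalgebra{B}, \qalgebra{C}$ and composable coisotropic bimodules $\qmodule{E} \in \CoisoBimodPair_{\ring{R}[[\lambda]]}(\qalgebra{B}, \qalgebra{A})$ and $\qmodule{F} \in \CoisoBimodPair_{\ring{R}[[\lambda]]}(\qalgebra{C}, \qalgebra{B})$ and write out the corresponding coherence $2$-cell equation, whose constituent $2$-morphisms are the composition multiplier $\mathsf{m}$ for $\cl$ from Lemma~\ref{lem:ClassicalLimitCompositionNatIso}, the composition multiplier $\mathsf{m}$ for $\reduce$ from Theorem~\ref{theorem:ReductionBicCoisoBimod}~\ref{item:ReductionMultiplication}, the components $\mu(\qmodule{E})$, $\mu(\qmodule{F})$ and $\mu(\qmodule{F} \tensor[\qalgebra{B}] \qmodule{E})$, the middle $1$-morphism $\mu_{\qalgebra{B}}$, and the associator $\asso$ of $\Bimodules_\ring{R}$. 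Each of these maps is, by construction, given on representatives by \emph{interchanging the passage to an equivalence class with the passage to the classical limit} --- this is exactly the content of $\eta(\qmodule{E})$ in Lemma~\ref{lemma:ClassicalLimitCommutesWithReductionModules} and of $\mathsf{m}$ in Lemma~\ref{lem:ClassicalLimitCompositionNatIso} --- so I would simply chase a factorizing tensor $\cl([y]) \tensor \cl([x])$ with $y \in \qmodule{F}_\Wobs$, $x \in \qmodule{E}_\Wobs$ around both composites and observe that both evaluate to $[\cl(y \tensor x)]$ after the canonical identification and the reshuffling by the unitors $\lidentity, \ridentity$ that is already built into the definition \eqref{eq:WhatIsMuAfterAll} of $\mu(\qmodule{E})$. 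The twists $\eta_{\qalgebra{A}}, \eta_{\qalgebra{B}}, \eta_{\qalgebra{C}}$ on the left actions are then tracked exactly as in the purely algebraic computation of Proposition~\ref{prop:ClassicalLimitCommutesWithReductionAlgebras}, transported along the embedding $\functor{L} \colon \CoisoAlgPair \longrightarrow \CoisoBimodPair$.

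For (ii) I would use that the unit $2$-isomorphisms of both functors $\cl$ (Theorem~\ref{theorem:ClisFunctor}) and $\reduce$ (Theorem~\ref{theorem:ReductionBicCoisoBimod}~\ref{item:ReductionIdentity}) are the identities. The identity coherence square then collapses to the assertion that $\mu$ evaluated at the identity bimodule $\deco{}{\qalgebra{A}}{\qalgebra{A}}{}{\qalgebra{A}}$ agrees with the identity $2$-cell on $\mu_{\qalgebra{A}}$; this is immediate once one notes that $\eta(\deco{}{\qalgebra{A}}{\qalgebra{A}}{}{\qalgebra{A}})$ is precisely $\eta_{\qalgebra{A}}$, read off from \eqref{eq:ClassicalLimitCommutesWithReductionModules}, and that $\eta_{\qalgebra{A}}$ is an algebra isomorphism, so that after the unitors $\mu(\deco{}{\qalgebra{A}}{\qalgebra{A}}{}{\qalgebra{A}})$ is nothing but the evident identification of $\mu_{\qalgebra{A}}$ with itself.

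The main obstacle is purely organizational rather than mathematical: the diagram in (i) carries many constraint cells simultaneously --- the twists $\eta$ on three different algebras, the two unitors $\lidentity, \ridentity$, the two composition multipliers $\mathsf{m}$, and the associator $\asso$ of $\Bimodules_\ring{R}$ --- so the real work is laying them out in the correct order and checking that the (non-canonical but mutually compatible) choices of representatives entering the definitions of $\mathsf{m}$ and of $\eta(\qmodule{E})$ do not clash. Once a single factorizing tensor has been pushed through, everything reduces to the tautology $[\cl(y \tensor x)] = [\cl(y) \tensor \cl(x)]$ under the canonical identifications, and I expect no difficulty beyond this bookkeeping.
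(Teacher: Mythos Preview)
Your proposal is correct and follows essentially the same approach as the paper: the paper's proof simply states that only the coherence conditions from Definition~\ref{definition:NaturalTransformationBicategories} remain and that these are a simple verification, which is exactly what you spell out in detail by chasing factorizing tensors through the composition and identity coherence diagrams. Your elaboration of the bookkeeping involved (the various $\mathsf{m}$'s, unitors, associators, and the twists $\eta$) and your observation that the unit $2$-isomorphisms of both $\cl$ and $\reduce$ are identities are precisely the ingredients the paper's one-line proof is implicitly invoking.
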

\begin{proof}
    The only things left to show are the coherence conditions for natural transformations between functors of bicategories, see
    Definition~\ref{definition:NaturalTransformationBicategories}.
    Again, this is a simple verification.
\end{proof}

This is not yet everything we need for
\eqref{eq:ClassicalLimitCommutesWithReductionBimodule} to commute.  We
still need to show that the natural transformation $\mu$ is
invertible.  For this we heavily use the fact that the 1-morphisms
$\mu_{\qalgebra{A}}$ are given by twisting the left multiplication of
$\id_{\cl(\qalgebra{A}_\red)}$ with the algebra isomorphism
$\eta_{\qalgebra{A}}^{-1}$.  Thus we can define
\begin{equation}
    \label{eq:HatMu}
    \hat\mu_{\qalgebra{A}}
    =
    \deco{\eta_{\qalgebra{A}}}{}{(\cl(\qalgebra{A})_\red)}{}{}
\end{equation}
in analogy to Lemma~\ref{lem:TwistedIdentityOneMorphism}.  Similarly,
\begin{equation}
    \label{eq:HatMuOfE}
    \hat\mu(\qmodule{E})
    =
    \ridentity^{-1} \circ  \operatorname{}\hat\eta(\qmodule{E}) \circ \lidentity
    \colon
    \hat\mu_{\qalgebra{B}} \tensor \cl(\qmodule{E})_\red
    \longrightarrow
    \cl(\qmodule{E}_\red) \tensor \hat\mu_{\qalgebra{A}}
\end{equation}
with
\begin{equation}
    \label{eq:HatMuOfEDef}
    \hat\eta(\qmodule{E})\colon
    \decorate*[^{\eta_{\qalgebra{B}}}]{{\cl(\qmodule{E})_\red}}{}
    \ni
    [\cl(\mathbf{x})]
    \longmapsto
    \cl([\mathbf{x}])
    \in
    \module{{\cl(\qmodule{E}_\red)}}^{\eta_{\qalgebra{B}}^{-1}}
\end{equation}
gives a natural isomorphism
\begin{equation}
    \hat\mu
    \colon
    (\hat{\mu}_{\qalgebra{B}})_* \circ (\red \circ \cl)_{\qalgebra{B}\qalgebra{A}}
    \Longrightarrow
    (\hat{\mu}_{\qalgebra{A}})^* \circ ( \cl \circ \red)_{\qalgebra{B}\qalgebra{A}}
\end{equation}
in analogy to Lemma~\ref{lemma:NaturalIsosForRedAndCl}.  This
yields again a natural transformation of functors between
bicategories.
\begin{lemma}
    \label{lemma:HatMuNaturalTrafo}%
    The 1-morphisms $\hat\mu_{\qalgebra{A}} \in
    \CoisoBimod_\ring{R}(\cl(\qalgebra{A}_\red),
    \cl(\qalgebra{A})_\red)$ together with the natural isomorphisms
    \begin{equation}
        \label{eq:HatMuForBA}
        \hat\mu
        \colon
        (\hat\mu_{\qalgebra{B}})_* \circ (\red \circ \cl)_{\qalgebra{B}\qalgebra{A}}
        \Longrightarrow
        (\hat\mu_{\qalgebra{A}})^* \circ ( \cl \circ \red )_{\qalgebra{B}\qalgebra{A}}
    \end{equation}
    form a natural transformation
    \begin{equation}
        \label{eq:HatMuNatural}
        \hat\mu \colon (\red \circ \cl ) \Longrightarrow (\cl \circ \red).
    \end{equation}
    of functors between bicategories.
\end{lemma}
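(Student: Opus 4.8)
The plan is to follow the proof of Lemma~\ref{lemma:MuNaturalTrafoBicatSense} essentially verbatim, using that the construction of $\hat\mu$ is obtained from that of $\mu$ by interchanging everywhere the algebra isomorphism $\eta_{\qalgebra{A}}$ with its inverse $\eta_{\qalgebra{A}}^{-1}$. First I would record that $\hat\mu_{\qalgebra{A}}$ as defined in \eqref{eq:HatMu} is an invertible $(\cl(\qalgebra{A}_\red), \cl(\qalgebra{A})_\red)$-bimodule: this is the mirror image of Lemma~\ref{lem:TwistedIdentityOneMorphism}, since twisting the left multiplication of $\cl(\qalgebra{A})_\red$ by the isomorphism $\eta_{\qalgebra{A}}$ yields an invertible bimodule whose inverse is obtained by twisting with $\eta_{\qalgebra{A}}^{-1}$. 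Next I would observe that $\hat\eta(\qmodule{E})$ from \eqref{eq:HatMuOfEDef}, and hence $\hat\mu(\qmodule{E}) = \ridentity^{-1} \circ \hat\eta(\qmodule{E}) \circ \lidentity$, is a well-defined isomorphism of the stated bimodules and is natural in $\qmodule{E}$; this is the same computation on representatives $[\cl(\mathbf{x})] \leftrightarrow \cl([\mathbf{x}])$ that established Lemma~\ref{lemma:NaturalIsosForRedAndCl}, again with $\eta$ and $\eta^{-1}$ exchanged, and the naturality square is checked on elements exactly as there. Thus the data $(\hat\mu_{\qalgebra{A}}, \hat\mu)$ already has the correct shape, and what remains is to verify the two coherence axioms of Definition~\ref{definition:NaturalTransformationBicategories}.

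For the composition coherence one compares, for composable coisotropic bimodules $\qmodule{F} \in \CoisoBimodPair_{\ring{R}[[\lambda]]}(\qalgebra{C}, \qalgebra{B})$ and $\qmodule{E} \in \CoisoBimodPair_{\ring{R}[[\lambda]]}(\qalgebra{B}, \qalgebra{A})$, the 2-morphism obtained by applying $\hat\mu$ to $\qmodule{F}$ and to $\qmodule{E}$ separately and then using the multiplicativity 2-isomorphism of $\cl \circ \red$, with the one obtained by first using the multiplicativity 2-isomorphism of $\red \circ \cl$ and then applying $\hat\mu$ to $\qmodule{F} \tensor[\qalgebra{B}] \qmodule{E}$. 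Now every multiplicativity 2-isomorphism occurring here is assembled from that of the reduction functor (see \eqref{eq:ReductionFunctorMultiplication} in Theorem~\ref{theorem:ReductionBicCoisoBimod}) and that of the classical limit functor (Lemma~\ref{lem:ClassicalLimitCompositionNatIso} and its analogue for ordinary bimodules), all of which act tautologically on representatives; and $\hat\eta$ is itself the tautological identification $[\cl(\mathbf{x})] \leftrightarrow \cl([\mathbf{x}])$. Hence, once the twists by $\eta_{\qalgebra{A}}$, $\eta_{\qalgebra{B}}$, $\eta_{\qalgebra{C}}$ are unravelled, both composites send a factorizing tensor $\cl([\mathbf{y}]) \tensor \cl([\mathbf{x}])$ of representatives to $[\cl(\mathbf{y} \tensor \mathbf{x})]$, up to the appropriate unitors $\ridentity$, $\lidentity$ and associators, so they agree. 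The unit coherence is the analogous and even simpler check that $\hat\mu$ is compatible with the identity 2-isomorphisms \eqref{eq:ReductionFunctorUnit} of the reduction functor and their classical-limit counterparts, and reduces to evaluating on the cyclic generator.

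The main obstacle, such as it is, is purely one of bookkeeping: keeping track of which copy of $\cl(\qalgebra{A}_\red) \cong \cl(\qalgebra{A})_\red$ carries the twist by $\eta_{\qalgebra{A}}$ and which carries the twist by $\eta_{\qalgebra{A}}^{-1}$, and threading the unitors and associators of $\Bimodules_\ring{R}$ through the two coherence diagrams so that the factorizing-tensor computations line up cleanly. Since every structural 2-isomorphism in sight restricts, on each component, to one of the canonical structural isomorphisms for ordinary bimodules, no genuinely new identity needs to be proved, and the verification is a simple, if slightly tedious, diagram chase, exactly as in Lemma~\ref{lemma:MuNaturalTrafoBicatSense}.
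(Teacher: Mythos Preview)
Your proposal is correct and matches the paper's approach: the paper gives no separate proof for this lemma, treating it as the evident analogue of Lemma~\ref{lemma:MuNaturalTrafoBicatSense} obtained by swapping $\eta_{\qalgebra{A}}$ and $\eta_{\qalgebra{A}}^{-1}$, which is exactly the line you take. Your explicit indication of how the coherence diagrams reduce to checks on representatives is a faithful unpacking of what the paper leaves implicit.
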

Now the last thing to show is that $\mu$ and $\hat{\mu}$ are indeed
inverse to each other: this is of course to be understood in the sense
of natural transformations between bicategories and hence up to a
modification:
\begin{lemma}
    \label{lemma:GammaModification}%
    The natural transformations $\mu \colon (\cl \circ \red )
    \Longrightarrow (\red \circ \cl)$ and $\hat{\mu} \colon (\red
    \circ \cl) \Longrightarrow (\cl \circ \red)$ are inverse to each
    other.
\end{lemma}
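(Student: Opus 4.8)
The plan is to produce the two invertible modifications required by the bicategorical notion of an inverse natural transformation, $\Gamma\colon \hat\mu\circ\mu \longRrightarrow \id_{\cl\circ\reduce}$ and $\hat\Gamma\colon \mu\circ\hat\mu \longRrightarrow \id_{\reduce\circ\cl}$, and then to verify the modification axiom and the remaining coherences. The whole construction rests on the observation already encoded in Lemma~\ref{lem:TwistedIdentityOneMorphism} and \eqref{eq:HatMu}: the $1$-morphisms $\mu_{\qalgebra{A}}$ and $\hat\mu_{\qalgebra{A}}$ are the algebras $\cl(\qalgebra{A}_\red)$ and $\cl(\qalgebra{A})_\red$, viewed as modules over themselves, with one module structure twisted by the mutually inverse algebra isomorphisms $\eta_{\qalgebra{A}}^{-1}$ and $\eta_{\qalgebra{A}}$ from Proposition~\ref{prop:ClassicalLimitCommutesWithReductionAlgebras}.

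First I would isolate the elementary algebraic fact that for any isomorphism $\alpha\colon R\to S$ of unital algebras the composite bimodule $\deco{\alpha}{}{S}{}{}\tensor[S]\deco{\alpha^{-1}}{}{R}{}{}$ is canonically isomorphic, as an $(R,R)$-bimodule, to the identity $1$-morphism $\id_R$ via $s\tensor r\longmapsto\alpha^{-1}(s)\,r$ with inverse $r\longmapsto\Unit_S\tensor r$, and symmetrically $\deco{\alpha^{-1}}{}{R}{}{}\tensor[R]\deco{\alpha}{}{S}{}{}\cong\id_S$. Instantiating this at $R=\cl(\qalgebra{A}_\red)$, $S=\cl(\qalgebra{A})_\red$ and $\alpha=\eta_{\qalgebra{A}}$ yields, for every object $\qalgebra{A}$, component $2$-isomorphisms $\Gamma_{\qalgebra{A}}\colon(\hat\mu\circ\mu)_{\qalgebra{A}}\to\id_{\cl(\qalgebra{A}_\red)}$ and $\hat\Gamma_{\qalgebra{A}}\colon(\mu\circ\hat\mu)_{\qalgebra{A}}\to\id_{\cl(\qalgebra{A})_\red}$, invertible by construction. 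Here one must absorb the associator and unitors of $\Bimodules_{\ring{R}}$ that enter the definition of the vertical composite of natural transformations between bicategories, but as these are the standard ones for bimodules this is only harmless bookkeeping.

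The heart of the matter is the \emph{modification axiom}. Unwinding the definitions gives $\mu(\qmodule{E})=\ridentity^{-1}\circ\eta(\qmodule{E})\circ\lidentity$ and $\hat\mu(\qmodule{E})=\ridentity^{-1}\circ\hat\eta(\qmodule{E})\circ\lidentity$, with $\eta(\qmodule{E})$ and $\hat\eta(\qmodule{E})$ the mutually inverse isomorphisms $\cl([\mathbf{x}])\leftrightarrow[\cl(\mathbf{x})]$ of Lemma~\ref{lemma:ClassicalLimitCommutesWithReductionModules}. Once the unitors cancel, the square required for $\Gamma$ to be a modification reduces to the statement that, under the canonical identifications of the previous step, the naturality $2$-cell of $\hat\mu\circ\mu$ at a bimodule $\qmodule{E}$ — obtained from $\mu(\qmodule{E})$ and $\hat\mu(\qmodule{E})$ by the usual whiskerings — becomes $\hat\eta(\qmodule{E})\circ\eta(\qmodule{E})=\id_{\cl(\qmodule{E}_\red)}$, i.e. $\cl([\mathbf{x}])\mapsto[\cl(\mathbf{x})]\mapsto\cl([\mathbf{x}])$. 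This is a direct check on representatives, using only that $\eta_{\qalgebra{A}}$ and $\eta_{\qalgebra{B}}$ are algebra isomorphisms so that all twists cancel; the mirror-image computation handles $\hat\Gamma$.

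Finally, with $\Gamma$ and $\hat\Gamma$ invertible modifications, one verifies the remaining triangle-type coherence identities making $\hat\mu$ a two-sided inverse of $\mu$; these follow formally from $\eta_{\qalgebra{A}}^{-1}\circ\eta_{\qalgebra{A}}=\id=\eta_{\qalgebra{A}}\circ\eta_{\qalgebra{A}}^{-1}$ together with the coherences of $\mu$ and $\hat\mu$ already established in Lemma~\ref{lemma:MuNaturalTrafoBicatSense} and Lemma~\ref{lemma:HatMuNaturalTrafo}. I expect the only mildly delicate point to be the \emph{bookkeeping of associators and unitors} needed to promote the elementary identity $\deco{\alpha}{}{S}{}{}\tensor[S]\deco{\alpha^{-1}}{}{R}{}{}\cong\id_R$ to a genuine component of a modification between composites of natural transformations of bicategories; past that nothing is left to choose, since the entire construction is assembled out of the single algebra isomorphism $\eta_{\qalgebra{A}}$ and the module isomorphism $\eta(\qmodule{E})$, both with manifest inverses.
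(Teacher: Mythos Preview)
Your proposal is correct and follows essentially the same approach as the paper: both construct the component $2$-isomorphisms $\Gamma_{\qalgebra{A}}$ and $\hat\Gamma_{\qalgebra{A}}$ from the elementary fact that twisted identity bimodules by mutually inverse algebra isomorphisms tensor to the identity, and then verify the modification square. Your treatment is in fact slightly more explicit than the paper's, which simply writes out the isomorphism $\hat\mu_{\qalgebra{A}}\tensor\mu_{\qalgebra{A}}\simeq\id_{\cl(\qalgebra{A}_\red)}$ on elements and asserts the relevant diagram commutes, whereas you isolate the general statement for an arbitrary isomorphism $\alpha\colon R\to S$, explain why the modification axiom reduces to $\hat\eta(\qmodule{E})\circ\eta(\qmodule{E})=\id$, and flag the triangle coherences.
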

\begin{proof}
    We need to show that there are invertible modifications
    $\Gamma \colon \hat{\mu} \circ \mu \longRrightarrow \id_{\cl \circ
      \red}$
    and
    $\hat{\Gamma} \colon \mu \circ \hat{\mu} \longRrightarrow
    \id_{\red \circ \cl}$.
    Hence, we need for any
    $\qalgebra{A} \in \CoisoBimod_{\ring{R}[[\lambda]]}$ a
    2-isomorphism
    $\Gamma_{\qalgebra{A}} \colon \hat{\mu}_{\qalgebra{A}} \tensor
    \mu_{\qalgebra{A}} \longrightarrow \id_{\cl(\qalgebra{A}_\red)}$.
    Recall that
    $\hat{\mu}_{\qalgebra{A}} =
    \decorate*[^{\eta_{\qalgebra{A}}}]{{\cl(\qalgebra{A})_\red}}{}$
    and
    $ \mu_{\qalgebra{A}} =
    \decorate*[^{\eta_{\qalgebra{A}}^{-1}}]{{\cl(\qalgebra{A}_\red)}}{}$,
    thus we get an isomorphism $\Gamma_{\qalgebra{A}}$ by
    \begin{equation*}
        \hat{\mu}_{\qalgebra{A}} \tensor \mu_{\qalgebra{A}}
        =
        \decorate*[^{\eta_{\qalgebra{A}}}]{{\cl(\qalgebra{A})_\red}}{}
        \tensor
        \decorate*[^{\eta_{\qalgebra{A}}^{-1}}]{{\cl(\qalgebra{A}_\red)}}{}
        \simeq
        \decorate*[^{(\eta_{\qalgebra{A}} \circ
          \eta_{\qalgebra{A}}^{-1})}]
        {{\cl(\qalgebra{A}_\red)}}{}
        =
        \cl(\qalgebra{A}_\red)
        =
        \id_{\cl(\qalgebra{A}_\red)},
    \end{equation*}
    mapping $[\cl(\deform{a})] \tensor \cl([\deform{b}])$ to
    $\cl([\deform{ab}])$ and with inverse mapping $\cl([\deform{a}])$
    to $[\cl(\Unit_{\qalgebra{A}})] \tensor \cl([\deform{a}])$.  With
    this isomorphism the diagram
    \begin{equation*}
        \begin{tikzcd}[column sep = large, row sep = large]
            (\hat{\mu}_{\qalgebra{B}} \tensor \mu_{\qalgebra{B}})
            \tensor
            \cl(\qmodule{E}_\red)
            \arrow{r}{(\hat{\mu} \circ \mu)(\qmodule{E})}
            \arrow{d}[swap]{\Gamma_{\qalgebra{B}} \tensor \id}
            &
            \cl(\qmodule{E}_\red)
            \tensor
            (\hat{\mu}_{\qalgebra{A}} \tensor \mu_{\qalgebra{A}})
            \arrow{d}{\id \tensor \Gamma_{\qalgebra{A}}} \\
            \id_{\cl(\qalgebra{B}_\red)} \tensor \cl(\qmodule{E}_\red)
            \arrow{r}{\id_{(\cl \circ \red)}(\qmodule{E})}
            & \cl(\qmodule{E}_\red) \tensor \id_{\cl(\qalgebra{A}_\red)}
        \end{tikzcd}
    \end{equation*}
    commutes.
    Similarly, we obtain an isomorphism $\hat{\Gamma}_{\qalgebra{A}}$ by
    \begin{equation*}
    	\mu_{\qalgebra{A}} \tensor \hat{\mu}_{\qalgebra{A}}
    	=
        \decorate*[^{\eta^{-1}_{\qalgebra{A}}}]{{\cl(\qalgebra{A}_\red)}}{}
        \tensor
        \decorate*[^{\eta_{\qalgebra{A}}}]{{\cl(\qalgebra{A})_\red}}{}
        \simeq
        \decorate*[^{\eta_{\qalgebra{A}}^{-1} \circ
          \eta_{\qalgebra{A}}}]
        {{\cl(\qalgebra{A})_\red}}{}
        =
        \cl(\qalgebra{A})_\red
        =
        \id_{\cl(\qalgebra{A})_\red},
    \end{equation*}
    mapping $\cl([\deform{a}]) \tensor[] [\cl(\deform{b})]$ to
    $[\cl(\deform{ab})]$ and inverse mapping $[\cl(\deform{a})]$ to
    $\cl([\Unit_{\qalgebra{A}}]) \tensor[] [\cl(\deform{a})]$.
\end{proof}

Thus we finally see that
\eqref{eq:ClassicalLimitCommutesWithReductionBimodule} commutes:
\begin{theorem}
    \label{theorem:ClassicalLimitCommutesWithReduction}%
    The classical limit on $\CoisoBimodPair_{\ring{R}[[\lambda]]}$
    commutes with reduction, i.e. the diagram
    \eqref{eq:ClassicalLimitCommutesWithReductionBimodule} given as
    \begin{equation}
        \label{eq:RedCommutesWithCL}
	\begin{tikzcd}
            \CoisoBimodPair_{\ring{R}[[\lambda]]}
            \arrow{r}{\cl}
            \arrow{d}[swap]{\reduce}
            & \CoisoBimodPair_\ring{R}
            \arrow{d}{\reduce} \\
            \Bimodules_{\ring{R}[[\lambda]]}
            \arrow{r}{\cl}
            & \Bimodules_\ring{R}
	\end{tikzcd}
    \end{equation}
    commutes up to the invertible natural transformations $\mu$ and
    $\hat{\mu}$.
\end{theorem}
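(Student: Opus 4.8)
The plan is to recognise that the statement is essentially a bookkeeping corollary of the lemmas already established in this section: the proof reduces to exhibiting the pair $(\mu,\hat\mu)$ as mutually inverse natural transformations between functors of bicategories, in the precise sense of Definitions~\ref{definition:NaturalTransformationBicategories} and~\ref{definition:Modification}. First I would stress why strict commutativity fails and a genuine natural transformation is unavoidable: the classical limit of the $\WOBS$-component is formed as $\qalgebra{A}_\Wobs/(\lambda\qalgebra{A}_\Total\cap\qalgebra{A}_\Wobs)$ rather than $\qalgebra{A}_\Wobs/\lambda\qalgebra{A}_\Wobs$, so $\cl(\qalgebra{A}_\red)$ and $\cl(\qalgebra{A})_\red$ are only canonically isomorphic, via the map $\eta_{\qalgebra{A}}$ of \eqref{eq:ClassicalLimitCommutesWithReductionAlgebras}; likewise at the level of bimodules we only have the isomorphisms $\eta(\qmodule{E})$ of Lemma~\ref{lemma:ClassicalLimitCommutesWithReductionModules} and $\hat\eta(\qmodule{E})$ of \eqref{eq:HatMuOfEDef}. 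All the higher structure we need is built from these isomorphisms by twisting identity bimodules, which is exactly the discrepancy that $\mu$ must absorb.

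Concretely, I would assemble the proof in four steps. Step one: take $\mu$ to be the natural transformation $(\cl\circ\red)\Longrightarrow(\red\circ\cl)$ whose component 1-morphisms are the twisted identity bimodules $\mu_{\qalgebra{A}}$ of Lemma~\ref{lem:TwistedIdentityOneMorphism} and whose component 2-morphisms are the $\mu(\qmodule{E})$ of Lemma~\ref{lemma:NaturalIsosForRedAndCl}; that these data satisfy the coherence axioms for a natural transformation between functors of bicategories is precisely Lemma~\ref{lemma:MuNaturalTrafoBicatSense}. Step two: dually, take $\hat\mu$ to be the natural transformation $(\red\circ\cl)\Longrightarrow(\cl\circ\red)$ built from the $\hat\mu_{\qalgebra{A}}$ and the $\hat\eta(\qmodule{E})$, which is Lemma~\ref{lemma:HatMuNaturalTrafo}. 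Step three: invoke Lemma~\ref{lemma:GammaModification} for the invertible modifications $\Gamma\colon \hat\mu\circ\mu\longRrightarrow\id_{\cl\circ\red}$ and $\hat\Gamma\colon \mu\circ\hat\mu\longRrightarrow\id_{\red\circ\cl}$, thereby establishing that $\hat\mu$ is inverse to $\mu$ in the relevant $2$-categorical sense. Step four: conclude that \eqref{eq:RedCommutesWithCL} commutes up to the invertible natural transformation $\mu$; restricting $\cl$ and $\red$ to the sub-bicategories of invertible $1$- and $2$-morphisms then yields the same statement for the Picard bigroupoids and hence the announced compatibility of Morita equivalence with classical limit and reduction.

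The genuine content is therefore all inside the cited lemmas, and the step I would flag as the real obstacle — were it not already dispatched there — is the coherence bookkeeping. Since every nontrivial isomorphism in play is a twist of an identity bimodule by the algebra isomorphism $\eta_{\qalgebra{A}}$ (or its inverse), one must track carefully on which side, and with which of $\eta_{\qalgebra{A}}$ and $\eta_{\qalgebra{A}}^{-1}$, each twist acts when composing $\mu$ with $\hat\mu$ and when verifying the naturality squares; the left and right unit constraints $\lidentity$, $\ridentity$ appearing in the definitions of $\mu(\qmodule{E})$ and $\hat\mu(\qmodule{E})$ serve precisely to move these twists past the identity bimodules, and the modification axioms for $\Gamma$, $\hat\Gamma$ are exactly the assertion that the resulting reshuffling is coherent with horizontal composition of $1$-morphisms. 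Once the lemmas are granted, the theorem itself is a one-line assembly.
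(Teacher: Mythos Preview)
Your proposal is correct and matches the paper's approach exactly: the theorem in the paper carries no separate proof and is simply the assembly of Lemmas~\ref{lem:TwistedIdentityOneMorphism}, \ref{lemma:NaturalIsosForRedAndCl}, \ref{lemma:MuNaturalTrafoBicatSense}, \ref{lemma:HatMuNaturalTrafo}, and \ref{lemma:GammaModification}, just as you outline. One small imprecision: your explanation of why strict commutativity fails invokes the quotient by $\lambda\qalgebra{A}_\Total\cap\qalgebra{A}_\Wobs$, but for coisotropic \emph{pairs} (which is the setting of the theorem) one has $\qalgebra{A}_\Total=\qalgebra{A}_\Wobs$, so this coincides with $\lambda\qalgebra{A}_\Wobs$; the genuine reason $\cl(\qalgebra{A}_\red)$ and $\cl(\qalgebra{A})_\red$ differ is simply that iterated quotients taken in different orders are only canonically isomorphic, not literally equal.
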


Thinking in geometric terms the Morita equivalence on the classical
side is well-understood.  Moreover, Morita equivalence after reduction
is just the classical Morita equivalence.  Thus if we want to
understand Morita equivalence in
$\CoisoBimodPair_{\ring{R}[[\lambda]]}$ better it might be helpful to
examine the functors $\cl$ and $\red$ in order to transport knowledge
about the classical or reduced side back to
$\CoisoBimodPair_{\ring{R}[[\lambda]]}$.

A first observation is that by taking Picard (bi-)groupoids of all
involved bicategories in
Theorem~\ref{theorem:ClassicalLimitCommutesWithReduction} immediately
yields the commutativity of
\begin{equation}
    \label{eq:RedCommutesWithCLOnPic}
    \begin{tikzcd}
	\Pic(\CoisoBimodPair_{\ring{R}[[\lambda]]})
        \arrow{r}{\cl}
        \arrow{d}[swap]{\reduce}
	& \Pic(\CoisoBimodPair_\ring{R})
        \arrow{d}{\reduce} \\
	\Pic(\Bimodules_{\ring{R}[[\lambda]]})
        \arrow{r}{\cl}
	& \Pic(\Bimodules_\ring{R}).
    \end{tikzcd}
\end{equation}

%
%

\appendix

\section{Bicategories}
\label{sec:Bicategories}

For the convenience of the reader and to explain our conventions, we
collect some basic definitions concerning bicategories, see
\cite{benabou.et.al:1967a} or \cite{leinster:2004a} for a more modern
treatment.
\begin{definition}[Bicategory]
    \label{def:bicategory}%
    A \emph{bicategory} $\bicategory{B}$ consists of the following data:
    \begin{definitionlist}
    \item A class $\category{B}_0$, the objects of $\bicategory{B}$.
    \item For any two objects $A,B \in \category{B}_0$ a category
        $\bicategory{B}(B,A)$.  The objects $\category{B}_1(B,A) =
        \Obj(\bicategory{B}(B,A))$ of this category are called
        1-morphisms from $A$ to $B$.  Morphisms $\phi \colon f
        \longrightarrow g$ between 1-morphisms $f, g \in
        \category{B}_1(B,A)$ are called 2-morphisms from $f$ to $g$.
        The set of such 2-morphisms is denoted by
        $\category{B}_2(g,f)$.
    \item For any three objects $A,B,C \in \category{B}_0$ a functor
        \begin{equation}
            \tensor[CBA] \colon
            \bicategory{B}(C,B) \times \bicategory{B}(B,A)
            \longrightarrow
            \bicategory{B}(C,A),
        \end{equation}
        called the composition or tensor product of 1-morphisms.
    \item For each object $A \in \category{B}_0$ a 1-morphism $\Id_A
        \in \category{B}_1(A,A)$, called the identity at $A$.
    \item For any four objects $A,B,C,D \in \category{B}_0$ a natural
        isomorphism
        \begin{equation}
            \label{eq:BicategoryAssoNaturalIsomorphism}
            \asso_{DCBA}
            \colon
            \tensor[CBA]
            \circ\mathop{}
            \left(\tensor[DCB] \times \id \right)
            \Longrightarrow
            \tensor[DCB] \circ \left( \id \times \tensor[CBA]\right),
        \end{equation}
        called the associativity.
    \item For any two objects $A,B \in \category{B}_0$ natural
        isomorphisms
        \begin{equation}
            \label{eq:BicategoryLeftNaturalIsomorphism}
            \lidentity_{BA} \colon
            \tensor[BBA] \circ \left( \id_B \times \id \right)
            \Longrightarrow
            \id
        \end{equation}
        and
        \begin{equation}
            \label{eq:BicategoryRightNaturalIsomorphism}
            \ridentity_{BA} \colon
            \tensor[BAA] \circ \left( \id \times \id_A \right)
            \Longrightarrow
            \id,
        \end{equation}
        called the left and right identity, respectively.
    \end{definitionlist}
    These data are required to fulfill the following coherence
    conditions:
    \begin{definitionlist}
    \item Associativity coherence: the diagram
        \begin{equation}
            \label{eq:BicatgeoryAssociativityCoherence}
            \begin{tikzcd}[column sep=0.4em]
                ((k \tensor[D] h) \tensor[C] g) \tensor[b] f
                \arrow{rr}{\asso(k,h,g) \tensor[B] \id}
                \arrow{d}[swap]{\asso(k \tensor[D] h,g,f)}
                &
                & (k \tensor[D] ( h \tensor[C] g)) \tensor[B] f
                \arrow{d}{\asso(k, h \tensor[C] g, f) } \\
                (k \tensor[D] h) \tensor[C] (g \tensor[B] f)
                \arrow{dr}[swap]{\asso(k,h,g \tensor[B] f)}
                &
                & k \tensor[D] ((h \tensor[C] g) \tensor[B] f)
                \arrow{dl}{\id \tensor[D] \asso(h,g,f)} \\
                & k \tensor[D] (h \tensor[C] (g \tensor[B] f))
            \end{tikzcd}
        \end{equation}
        commutes for all $k \in \category{B}_1(E,D)$, $h \in
        \category{B}_1(D,C)$, $g \in \category{B}_1(C,B)$ and $f
        \in \category{B}_1(B,A)$.
    \item Identity coherence: the diagram
        \begin{equation}
            \label{eq:BicategoryIdentityCoherence}
            \begin{tikzcd}
                (g \tensor[B] \Id_B) \tensor[B] f
                \arrow{rr}{\asso(g, \Id_B, f)}
                \arrow{dr}[swap]{\ridentity(g) \tensor[B] \id}
                &
                & g \tensor[B] (\Id_B \tensor[B] f)
                \arrow{dl}{\id \tensor[B] \lidentity(f)} \\
                &g \tensor[B] f
                & { }
            \end{tikzcd}
        \end{equation}
        commutes for all $g \in \category{B}_1(C,B)$ and $f \in
        \category{B}_1(B,A)$.
    \end{definitionlist}
\end{definition}

Note that we simplify $\tensor[CBA]$ to $\tensor[B]$ and drop indices
of the involved natural isomorphisms whenever there is no possibility
of confusion.  Recall that in bicategories there is a way to compose
1-morphisms with 2-morphisms.  Let
\begin{equation}
    \label{eq:fggprimeDiagram}
    \begin{tikzcd}
	A
        \arrow[r, "f"]
	&B
        \arrow[r, bend left=50,"g", " "{name=U, below}]
        \arrow[r, bend right=50, "g^\prime"{below}, " "{name=D}]
	&C
	\arrow[Rightarrow, "\phi", from=U, to=D	]
    \end{tikzcd}
\end{equation}
be given, then we get a 2-morphism
$f^*\phi = \phi \tensor[B] \id_f \colon g \tensor f \longrightarrow
g^\prime \tensor f$
between the (horizontal) compositions of $f$ and $g$, and $f$ and
$g^\prime$, respectively.
In the same way, given
\begin{equation}
    \label{eq:ggprimehDiagram}
    \begin{tikzcd}
	B
        \arrow[r, bend left=50,"g", " "{name=U, below}]
        \arrow[r, bend right=50, "g^\prime"{below}, " "{name=D}]
	& C
        \arrow[Rightarrow, "\phi", from=U, to=D	]
        \arrow[r, "h"]
	& D
    \end{tikzcd}.
\end{equation}
one defines a 2-morphism
$h_*\phi = \id_h \tensor[C] \phi \colon h \tensor g \longrightarrow h
\tensor g^\prime$
between the (horizontal) compositions.
These compositions can also be seen as functors between the
appropriate hom-categories and are sometimes called \emph{whiskering}.

As morphisms of bicategories we use what is often called a weak
(2-)functor or pseudofunctor. Note that there are also weaker
notions like lax and oplax functor which, however, will not suffice
for our purposes. In the original work Benabou calls the following
version a homomorphism of bicategories \cite{benabou:1967a}:
\begin{definition}[Functor of bicategories]
    \label{definition:FunctorBicategories}%
    Let $\bicategory{A}$ and $\bicategory{B}$ be two bicategories.  A
    functor $\functor{F}$ from $\bicategory{A}$ to $\bicategory{B}$,
    written
    $\functor{F} \colon \bicategory{A} \longrightarrow
    \bicategory{B}$, consists of the following data:
    \begin{definitionlist}
    \item \label{item:FunctorNullMorphisms} A map
        $\functor{F} \colon \category{A}_0 \longrightarrow
        \category{B}_0$
        mapping objects of $\bicategory{A}$ to objects of
        $\bicategory{B}$.
    \item \label{item:FunctorOneMorphCat} For any two objects
        $A,B \in \category{A}_0$ a functor
        \begin{equation}
            \label{eq:FunctorBicatFBA}
            \functor{F}_{BA}\colon
            \bicategory{A}(B,A)
            \longrightarrow
            \bicategory{B}(\functor{F}B ,\functor{F}A).
        \end{equation}
    \item \label{item:FunctorNaturalIso} For each three objects
        $A,B,C \in \category{A}_0$ a natural isomorphism
        \begin{equation}
            \label{eq:FunctorBicategoriesTensorIsomorphism}
            \mathsf{m}_{CBA}\colon
            \tensor[\functor{F}B]
            \circ\mathop{}
            \left(\functor{F}_{CB} \times \functor{F}_{BA} \right)
            \Longrightarrow
            \functor{F}_{CA} \circ \tensor[B].
        \end{equation}
    \item \label{item:FunctorBicatTwoIso} For any object
        $A \in \category{A}_0$ a 2-isomorphism
        \begin{equation}
            \mathsf{u}_A\colon
            \Id_{\functor{F}A}
            \longrightarrow
            \functor{F}_{AA} (\Id_A).
        \end{equation}
    \end{definitionlist}
    These data are required to fulfil the following coherence
    conditions:
    \begin{definitionlist}
    \item \label{item:CompositionCoherence} Composition coherence: the
        diagram
        \begin{equation}
            \label{eq:FunctorBicategoriesComposition}
            \begin{tikzcd}
                \functor{F}h \tensor[\functor{F}C] (\functor{F}g \tensor[\functor{F}B] \functor{F}f)
                \arrow{r}{\asso}
                \arrow{d}[swap]{\id_{\functor{F}h} \tensor \mathsf{m}(g,f)}
                & (\functor{F}h \tensor[\functor{F}C] \functor{F}g ) \tensor[\functor{F}B] \functor{F}f
                \arrow{d}{\mathsf{m}(h ,g) \tensor \id_{\functor{F}f}}\\
                \functor{F}h \tensor[\functor{F}C] \functor{F}(g \tensor[B] f)
                \arrow{d}[swap]{\mathsf{m}(h , g \tensor[B] f)}
                & \functor{F}(h \tensor[C] g) \tensor[\functor{F}B] \functor{F}f
                \arrow{d}{\mathsf{m}(h \tensor[C] g , f)} \\
                \functor{F}(h \tensor[C] (g \tensor[B] f) )
                \arrow{r}{\functor{F}(\asso)}
                & \functor{F}((h \tensor[C] g) \tensor[B] f)
            \end{tikzcd}
        \end{equation}
        commutes for all $h \in \category{A}_1(D,C)$,
        $g \in \category{A}_1(C,B)$ and $f \in \category{A}_1(B,A)$.
    \item \label{item:IdentityCoherence} Identity coherence: the
        diagram
        \begin{equation}
            \label{eq:FunctorBicategoriesIdentityCoherence}
            \begin{tikzcd}[column sep = huge]
                \Id_{\functor{F}B} \tensor[\functor{F}B] \functor{F}f
                \arrow{r}{\lidentity(\functor{F}f)}
                \arrow{d}[swap]{\mathsf{u}_B \tensor[\functor{F}B] \id_{\functor{F}f}}
                & \functor{F}f
                \arrow{dd}{\id}
                & \functor{F}f \tensor[\functor{F}A] \Id_{\functor{F}A}
                \arrow{l}[swap]{\ridentity(\functor{F}f)}
                \arrow{d}{\id_{\functor{F}f} \tensor[\functor{F}A] \mathsf{u}_A}\\
                \functor{F}(\Id_B) \tensor[\functor{F}B] \functor{F}f
                \arrow{d}[swap]{\mathsf{m}(\Id_B,f)}
                & { }
                &\functor{F}f \tensor[\functor{F}A] \functor{F}(\Id_A)
                \arrow{d}{\mathsf m(f , \Id_A)}\\
                \functor{F}( \Id_B \tensor[B] f )
                \arrow{r}{\functor{F}(\lidentity(f))}
                &\functor{F}f
                & \functor{F}(f \tensor[A] \Id_A)
                \arrow{l}[swap]{\functor{F}(\ridentity(f))}
            \end{tikzcd}
        \end{equation}
        commutes for all $f \in \category{A}_1(B,A)$.
    \end{definitionlist}
\end{definition}

Composition of functors of bicategories is defined by composing the
obvious maps, functors and natural transformations.  Similar to usual
categories there is also a notion of natural transformation between
functors.  But now we have to incorporate the higher morphisms.
\begin{definition}[Natural transformation]
    \label{definition:NaturalTransformationBicategories}%
    Let
    $\functor{F}, \functor{G} \colon \bicategory{A} \longrightarrow
    \bicategory{B}$
    be functors between bicategories $\bicategory{A}$ and
    $\bicategory{B}$.  A natural transformation $\eta$ from
    $\functor{F}$ to $\functor{G}$, written
    $\eta \colon \functor{F} \Longrightarrow \functor{G}$, consists of
    the following data:
    \begin{definitionlist}
    \item \label{item:BicatNaturalTrafoOneMorph} for each
        $A \in \bicategory{A}_0$ a 1-morphism
        $\eta_A \colon \functor{F}A \longrightarrow \functor{G}A$ in
        $\bicategory{B}$.
    \item \label{item:BicatNaturalTrafoTwoMorph} for each 1-morphism
        $f \in \bicategory{A}_1(B,A)$ a 2-isomorphism
        \begin{equation}
            \label{eq:NaturalTransformationBicategoriesEta}
            \eta_f \colon
            \eta_B \tensor[\functor{F}B] \functor{F}f
            \longrightarrow \functor{G}f \tensor[\functor{G}A] \eta_A,
        \end{equation}
        such that for any $A, B \in \bicategory{A}_0$ the
        2-morphisms $\eta_f$ are the components of a natural
        isomorphism
        \begin{equation}
            \label{eq:NaturalTransformationorBicategoriesEtaProperties}
            \eta_{BA} \colon
            (\eta_B)_* \circ \functor{F}_{BA}
            \Longrightarrow
            (\eta_A)^* \circ \functor{G}_{BA} .
        \end{equation}
    \end{definitionlist}
    These data are required to fulfil the following coherence
    conditions:
    \begin{definitionlist}
    \item \label{item:NaturalTrafoBicatBigDiagram} The diagram
        \begin{equation}
            \label{eq:NaturalTransformationBicategoriesBigDiag}
            \begin{tikzcd}
                \eta_C \tensor[\functor{F}C] \left( \functor{F}g \tensor[\functor{F}B] \functor{F}f \right)
                \arrow{r}{\id \tensor \mathsf{m}^\functor{F}}
                \arrow{d}[swap]{\asso^{-1}}
                & \eta_C \tensor[\functor{F}C] \functor{F}\left( g \tensor f \right)
                \arrow{ddddd}{\eta(g \tensor f)} \\
                \left( \eta _C \tensor[\functor{F}C] \functor{F}g \right) \tensor[\functor{F}B] \functor{F}f
                \arrow{d}[swap]{\eta(g) \tensor \id}
                & { } \\
                \left( \functor{G}g \tensor[\functor{G}B] \eta_B \right) \tensor[\functor{F}B] \functor{F}f
                \arrow{d}[swap]{\asso}
                & { } \\
                \functor{G}g \tensor[\functor{G}B] \left( \eta_B \tensor[\functor{F}B] \functor{F}f \right)
                \arrow{d}[swap]{\id \tensor \eta(f)}
                & { } \\
                \functor{G}g \tensor[\functor{G}B] \left( \functor{G}f \tensor[\functor{G}A] \eta_A \right)
                \arrow{d}[swap]{\asso^{-1}}
                & { } \\
                \left( \functor{G}g \tensor[\functor{G}B] \functor{G}f \right) \tensor[\functor{G}A] \eta_A
                \arrow{r}{\mathsf{m}^\functor{G} \tensor \id}
                & \functor{G}(g \tensor f) \tensor[\functor{G}A] \eta_A
            \end{tikzcd}
        \end{equation}
        commutes for all $f \in \bicategory{A}_1(B,A)$ and
        $g \in \bicategory{A}_1(C,B)$.
    \item \label{item:NaturalTrafoBicatSmallDiagram} The diagram
        \begin{equation}
            \label{eq:NaturalTransformationBicategoriesSmallDiagram}
            \begin{tikzcd}
                \eta_A \tensor[\functor{F}A] \Id_{\functor{F}A}
                \arrow{r}{\ridentity}
                \arrow{d}[swap]{\id \tensor \mathsf{u}_A }
                & \eta_A
                \arrow{r}{\lidentity^{-1}}
                & \Id_{\functor{G}A} \tensor[\functor{G}A] \eta_A
                \arrow{d}{\mathsf{u}_A \tensor \id}\\
                \eta_A \tensor[\functor{F}A] \functor{F}(\Id_A)
                \arrow{rr}{\eta_{\Id_A}}
                & { }
                & \functor{G}(\Id_A) \tensor[\functor{G}A] \eta_A
            \end{tikzcd}
        \end{equation}
        commutes for all $A \in \bicategory{A}_0$.
    \end{definitionlist}
\end{definition}

For bicategories there is also the possibility to relate natural
transformations via so called modifications:
\begin{definition}[Modification]
    \label{definition:Modification}%
    Let $\bicategory{A}$ and $\bicategory{B}$ be bicategories.  Let
    furthermore $\eta \colon \functor{F} \Longrightarrow \functor{G}$
    and $\mu \colon \functor{F} \Longrightarrow \functor{G}$ be two
    natural transformations between functors
    $\functor{F}, \functor{G} \colon \bicategory{A} \longrightarrow
    \bicategory{B}$.
    A modification $\Gamma \colon \eta \longRrightarrow \mu$ is an
    assignment that assigns to every object $A \in \bicategory{A}_0$ a
    2-morphism $\Gamma_A \colon \eta_A \longrightarrow \mu_A$ such
    that for each morphism $f \in \bicategory{A}_1(B,A)$ the
    diagram
    \begin{equation}
	\begin{tikzcd}
            \eta_B \tensor[\functor{F}B] \functor{F}f
            \arrow{d}[swap]{\Gamma_B \tensor \id}
            \arrow{r}{\eta(f)}
            & \functor{G}f \tensor[\functor{G}A] \eta_A
            \arrow{d}{\id \tensor \Gamma_A} \\
            \mu_B \tensor[\functor{F}B] \functor{F}f
            \arrow{r}{\mu(f)}
            & \functor{G}f \tensor[\functor{G}A] \mu_A
	\end{tikzcd}
    \end{equation}
    commutes.
\end{definition}

%
%

%
%

\end{document}
